\numberwithin{equation}{section}
\numberwithin{figure}{section}
\theoremstyle{plain}
\newtheorem{thm}{\protect\theoremname}[section]
  \theoremstyle{remark}
  \newtheorem{rem}[thm]{\protect\remarkname}
  \theoremstyle{definition}
  \newtheorem{defn}[thm]{\protect\definitionname}
  \theoremstyle{definition}
  \newtheorem{example}[thm]{\protect\examplename}
  \theoremstyle{plain}
  \newtheorem{cor}[thm]{\protect\corollaryname}
  \theoremstyle{plain}
  \newtheorem{prop}[thm]{\protect\propositionname}
  \theoremstyle{plain}
  \newtheorem{lem}[thm]{\protect\lemmaname}
  \theoremstyle{plain}
  \newtheorem{conjecture}[thm]{\protect\conjecturename}
\title[Serre polynomials of character varieties]{Serre polynomials of $SL_n$- and $PGL_n$-character varieties of free groups}
\author[C. Florentino]{Carlos Florentino}
\address{Departamento de Matem\'{a}tica, Faculdade de Ci\^encias, Univ. de Lisboa, Campo Grande, Edf. C6, Lisbon, Portugal}
\email{caflorentino@ciencias.ulisboa.pt}
\author[A. Nozad]{Azizeh Nozad}
\address{School of Mathematics, Institute for Research in Fundamental Sciences (IPM), P.O.Box: 19395-5746, Tehran, Iran}
\email{anozad@ipm.ir}
\author[A. Zamora]{Alfonso Zamora}
\address{Departamento de Matem\'atica Aplicada a las TIC, ETSI Inform\'aticos, Universidad Polit\'ecnica de Madrid, Campus de Montegancedo,
28660, Madrid, Spain}
\email{alfonso.zamora@upm.es}
\thanks{This work was partially supported by CAMGSD and CMAFcIO (University of Lisbon), the projects QuantumG PTDC/MAT-PUR/30234/2017, FCT Portugal, a grant from IPM (Iran) and ICTP (Italy), and the projects MTM2016-79400-P and PID2019-108936GB-C21 by the Spanish government.}
\keywords{Character varieties, mixed Hodge structures, Serre polynomial, $E$-polynomial, representations of free groups}
\subjclass[2010]{14L30, 32S35, 14D20}
\renewcommand{\hom}{\mathrm{Hom}}
\def\quot{/\!\!/}
\def\sym{\mathsf{Sym}}
\newcommand{\gitq}{/\!\!/}
\newcommand{\pexp}{\operatorname{PExp}}
\newcommand{\plog}{\operatorname{PLog}}
\providecommand{\conjecturename}{Conjecture}
  \providecommand{\corollaryname}{Corollary}
  \providecommand{\definitionname}{Definition}
  \providecommand{\examplename}{Example}
  \providecommand{\lemmaname}{Lemma}
  \providecommand{\propositionname}{Proposition}
  \providecommand{\remarkname}{Remark}
\providecommand{\theoremname}{Theorem}
\providecommand{\conjecturename}{Conjecture}
  \providecommand{\corollaryname}{Corollary}
  \providecommand{\definitionname}{Definition}
  \providecommand{\examplename}{Example}
  \providecommand{\lemmaname}{Lemma}
  \providecommand{\propositionname}{Proposition}
  \providecommand{\remarkname}{Remark}
\providecommand{\theoremname}{Theorem}
\providecommand{\conjecturename}{Conjecture}
  \providecommand{\corollaryname}{Corollary}
  \providecommand{\definitionname}{Definition}
  \providecommand{\examplename}{Example}
  \providecommand{\lemmaname}{Lemma}
  \providecommand{\propositionname}{Proposition}
  \providecommand{\remarkname}{Remark}
\providecommand{\theoremname}{Theorem}
  \providecommand{\conjecturename}{Conjecture}
  \providecommand{\corollaryname}{Corollary}
  \providecommand{\definitionname}{Definition}
  \providecommand{\examplename}{Example}
  \providecommand{\lemmaname}{Lemma}
  \providecommand{\propositionname}{Proposition}
  \providecommand{\remarkname}{Remark}
\providecommand{\theoremname}{Theorem}
\begin{document}
\begin{abstract}
Let $G$ be a complex reductive group and $\mathcal{X}_{r}G$ denote
the $G$-character variety of the free group of rank $r$. Using geometric
methods, we prove that $E(\mathcal{X}_{r}SL_{n})=E(\mathcal{X}_{r}PGL_{n})$,
for any $n,r\in\mathbb{N}$, where $E(X)$ denotes the Serre (also
known as $E$-) polynomial of the complex quasi-projective variety
$X$, settling a conjecture of Lawton-Muñoz in \cite{LM}. The proof
involves the stratification by polystable type introduced in \cite{FNZ},
and shows moreover that the equality of $E$-polynomials holds for
every stratum and, in particular, for the irreducible stratum of $\mathcal{X}_{r}SL_{n}$
and $\mathcal{X}_{r}PGL_{n}$. We also present explicit computations
of these polynomials, and of the corresponding Euler characteristics,
based on our previous results and on formulas of Mozgovoy-Reineke
for $GL_{n}$-character varieties over finite fields. 
\end{abstract}

\maketitle

\section{Introduction}

Given a complex reductive algebraic group $G$, and a finitely presented
group $\Gamma$, the $G$-character variety of $\Gamma$ is the (affine)
geometric invariant theory (GIT) quotient 
\[
\mathcal{X}_{\Gamma}G=\hom(\Gamma,G)\quot G.
\]
The most well studied families of character varieties include the
cases when the group $\Gamma$ is the fundamental group of a Riemann
surface $\Sigma$, and its ``twisted'' variants. These varieties
correspond, via non-abelian Hodge theory, to certain moduli spaces
of $G$-Higgs bundles which play an important role in the quantum
field theory interpretation of the geometric Langlands correspondence
(see, for example \cite{Si}, \cite{KW}).

In the context of SYZ mirror symmetry \cite{SYZ}, the hyperkähler
nature of the Hitchin systems allowed a topological criterion for
mirror symmetry: as a coincidence between certain Hodge numbers of
moduli spaces of $G$-Higgs bundles over $\Sigma$, for Langlands
dual groups $G$ and $G^{L}$ (see \cite{Th}). The Hodge structure
of these moduli spaces is \emph{pure}, and topological mirror symmetry
has been established in the smooth/orbifold case for the pair of Langlands
dual groups $SL_{n}\equiv SL(n,\mathbb{C})$ and $PGL_{n}\equiv PGL(n,\mathbb{C})$
(see \cite{HT,GWZ}).

Such topological mirror symmetries are also expected for \emph{mixed}
Hodge structures on the cohomology of other classes of moduli spaces.
In the case of character varieties, whose Hodge structure is \emph{not
pure}, and which are generally, neither projective nor smooth, information
on the Hodge numbers is encoded in the Serre polynomial (also called
$E$-polynomial), which also provides interesting arithmetic properties
(see \cite{HRV}).

In this article, we consider character varieties of the free group
$\Gamma=F_{r}$ of rank $r\in\mathbb{N}$, and prove the following
equality. We denote the $E$-polynomial of a complex quasi-projective
variety $X$ by $E(X)$. 
\begin{thm}
\label{thm:main}Let $r\in\mathbb{N}$, and $\Gamma=F_{r}$. Then
\[
E(\mathcal{X}_{\Gamma}SL_{n})=E(\mathcal{X}_{\Gamma}PGL_{n}),\quad\forall n\in\mathbb{N}.
\]

\end{thm}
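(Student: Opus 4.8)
The plan is to turn the equality into a statement about a finite abelian group acting on a single variety. Since $\hom(F_r,G)=G^r$ and the conjugation action of $\SL_n$ (resp.\ $\GL_n$) factors through $\PGL_n$, one has $\mathcal{X}_r\SL_n=\SL_n^r\quot\PGL_n$, $\mathcal{X}_r\GL_n=\GL_n^r\quot\PGL_n$ and $\mathcal{X}_r\PGL_n=\PGL_n^r\quot\PGL_n$. The covering $\SL_n\to\PGL_n$ with kernel $\mu_n=Z(\SL_n)$ identifies $\PGL_n^r=\SL_n^r/\mu_n^r$, and since the finite group $A:=\hom(F_r,\mu_n)\cong\mu_n^r$ commutes with conjugation this descends to
\[
\mathcal{X}_r\PGL_n\;\cong\;\mathcal{X}_r\SL_n/A,
\]
where $A$ acts by twisting each generator by an $n$-th root of unity. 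Thus the theorem is equivalent to the assertion that $A$ acts trivially on the virtual mixed Hodge structure of $\mathcal{X}_r\SL_n$, i.e.\ $E(\mathcal{X}_r\SL_n)=E\big(H^\ast_c(\mathcal{X}_r\SL_n)^{A}\big)$. A first, unconditional, reduction comes from the parallel decomposition $\mathcal{X}_r\GL_n\cong(\mathcal{X}_r\SL_n\times(\CC^\ast)^r)/A$: because $A\subset(\CC^\ast)^r$ acts on the torus factor by multiplication by roots of unity, hence trivially on $H^\ast_c((\CC^\ast)^r)$, one gets $E(\mathcal{X}_r\GL_n)=(uv-1)^r\,E(\mathcal{X}_r\PGL_n)$ with no further work. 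Hence it suffices to prove the companion relation $E(\mathcal{X}_r\GL_n)=(uv-1)^r\,E(\mathcal{X}_r\SL_n)$, i.e.\ that the determinant fibration $\mathcal{X}_r\GL_n\to(\CC^\ast)^r$ is multiplicative on $E$-polynomials.

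Next I would pass to the stratification by polystable type of \cite{FNZ}. Twisting by an element of $A$ sends an irreducible constituent $V$ to $V\otimes\chi$, preserving its dimension and multiplicity, so $A$ preserves each stratum; it therefore suffices to prove the $E$-polynomial equality stratum by stratum. Organising the strata of the whole tower into generating series $Z_G(t)=\sum_n E(\mathcal{X}_r G_n)\,t^n$, the polystable stratification expresses each $Z_G$ as the plethystic exponential $\pexp$ of the series $I_G(t)$ built from the irreducible strata of $\GL_d,\SL_d,\PGL_d$ for $d\le n$, with an $A$-action assembled from the irreducible pieces. Applying $\plog$ then reduces the whole theorem to the corresponding identity for the irreducible strata alone: concretely, to $E(\mathcal{X}^{\mathrm{irr}}_r\SL_n)=E(\mathcal{X}^{\mathrm{irr}}_r\PGL_n)$, equivalently $I_{\GL}=(uv-1)^r I_{\SL}$.

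The hard part is this irreducible case, where the finite monodromy really has to be controlled. Here $\mathcal{X}^{\mathrm{irr}}_r\SL_n$ is the fibre over $\mathbf 1$ of $\det\colon\mathcal{X}^{\mathrm{irr}}_r\GL_n\to(\CC^\ast)^r$, and $A$ is precisely the monodromy of this bundle, pulled back along the $n$-th power isogeny of $(\CC^\ast)^r$; by contrast the quotient map $\mathcal{X}^{\mathrm{irr}}_r\GL_n\to\mathcal{X}^{\mathrm{irr}}_r\PGL_n$ is a Zariski-locally trivial $(\CC^\ast)^r$-torsor (tori are special groups), which is why the $\PGL_n$ side carries a clean factor $(uv-1)^r$. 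The main obstacle is therefore to show that this $\mu_n^r$-monodromy acts trivially on $H^\ast_c(\mathcal{X}^{\mathrm{irr}}_r\SL_n)$; the naive central homotopy fails, since a path from $1$ to a primitive root of unity inside $\SL_n$ cannot be kept central. I would instead exploit that these free-group character varieties are of Hodge--Tate type, so that $E$ is a polynomial in $uv$ and is computed by counting $\mathbb{F}_q$-points; triviality of the monodromy on $E$ then becomes the arithmetic identity $\#\mathcal{X}^{\mathrm{irr}}_r\SL_n(\mathbb{F}_q)=\#\mathcal{X}^{\mathrm{irr}}_r\PGL_n(\mathbb{F}_q)$, which I would verify using the Mozgovoy--Reineke count of $\GL_n$-representations over $\mathbb{F}_q$ together with the plethystic ($\plog$) inversion that isolates the irreducible contribution. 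Feeding the resulting identity $I_{\GL}=(uv-1)^rI_{\SL}$ back through $\pexp$ then yields the equality on every stratum, and summing gives $E(\mathcal{X}_r\SL_n)=E(\mathcal{X}_r\PGL_n)$, which is the (stronger, stratum-wise) form claimed in the abstract and implies the theorem.
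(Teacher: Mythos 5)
Your proposal shares its skeleton with the paper's proof: the relation $E(\mathcal{X}_{r}GL_{n})=(uv-1)^{r}E(\mathcal{X}_{r}PGL_{n})$ coming from the torus quotient, the stratification by polystable type, and the isolation of the irreducible stratum as the genuinely hard case --- where you even diagnose the obstruction correctly (the residual $\mathbb{Z}_{n}^{r}$-monodromy of the determinant fibration, which no central homotopy can kill). But your resolution of that hard case is precisely the approach that is known to be obstructed for even $n$. First, the Hodge--Tate property you invoke is not available: the paper states (Remark \ref{rem:Hodge-Tate-open} and Conjecture \ref{conj:Langlands-dual}) that it is open except for $n\le3$; and in any case Hodge--Tate type does not imply that $E$ is computed by counting $\mathbb{F}_{q}$-points --- Katz's theorem runs the other way (polynomial count $\Rightarrow$ $E$ equals the counting polynomial), so you would separately need to prove that $\mathcal{X}_{r}^{irr}SL_{n}$ is polynomial-count. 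Second, and more fundamentally, the identity $\#\mathcal{X}_{r}^{irr}SL_{n}(\mathbb{F}_{q})=\#\mathcal{X}_{r}^{irr}PGL_{n}(\mathbb{F}_{q})$ cannot be extracted from Mozgovoy--Reineke: their formulas count $GL_{n}$- (hence $PGL_{n}$-) representations, while the $SL_{n}$-count is a fixed-determinant count that is \emph{not} obtained by dividing by $(q-1)^{r}$, since over $\mathbb{F}_{q}$ one cannot twist by $n$-th roots to move between fibers of $\det$. When $n$ is even, every odd prime $p$ has $\gcd(n,p-1)\ge2$, so $SL_{n}(\mathbb{F}_{p})\to PGL_{n}(\mathbb{F}_{p})$ is never a bijection on points; the isogeny argument (detailed in the paper's subsection on Katz's theorem) therefore works only for odd $n$, via Dirichlet, and the even case is exactly the part of the conjecture this paper settles. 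It does so by a geometric argument your proposal has no substitute for: deformation retraction to compact representation spaces of $SU(n)$ and $PU(n)$, an explicit homotopy (Lemma \ref{lem:action-on-strata}) trivializing the $\mathbb{Z}_{n}^{r}$-action on the cohomology of each strictly polystable compact stratum, and a long-exact-sequence/5-lemma induction (Proposition \ref{prop:snake-cohomology}, Lemma \ref{lem:induction}) transferring triviality from $SU(n)^{r}$ and the closed strictly polystable locus to the open irreducible one.

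A secondary gap: your plethystic reduction of the strictly polystable $SL_{n}$ strata to irreducible data does not work as stated. The determinant-one condition couples the factors, so the $E$-polynomial of an $SL_{n}$ stratum is not a plethystic (symmetric-product) expression in irreducible $SL_{d}$ or $PGL_{d}$ pieces; for instance the $[1^{2}]$-stratum of $\mathcal{X}_{r}SL_{2}$ is $(\mathbb{C}^{*})^{r}/\mathbb{Z}_{2}$ with the involution $\sigma\mapsto\sigma^{-1}$, not a symmetric square of anything built from $SL_{1}$. The paper replaces this step by Theorem \ref{thm:SL-2strata}: an $S_{\mathbf{n}}$-equivariant special-fibration argument comparing the torus $J=(\mathcal{R}_{r}\mathbb{C}^{*})^{s}$ with the subtorus $H=\ker m$, which is what your $\plog/\pexp$ manipulation would have to be replaced by before the additivity step can be run.
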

This result was proved for $n=2$ and 3 in Lawton-Muñoz \cite{LM},
using complex geometry methods; these computations increase substantially
for higher $n$, making it practically impossible to proceed explicitly.
In \cite[Remark 6]{LM} an arithmetic argument by Mozgovoy is mentioned,
showing the equality for all odd $n$ (by using a theorem of Katz
in \cite[Appendix]{HRV}, see also subsection 4.5), as further evidence
for the conjectured equality for all $n$, which we hereby confirm.

Denote by $\mathcal{X}_{\Gamma}^{irr}G\subset\mathcal{X}_{\Gamma}G$
the Zariski open subvariety of irreducible representation classes.
The following result, together with the stratification\textbackslash{}
by polystable type introduced in \cite{FNZ} for arbitrary $GL_{n}$-character
varieties, are the key ingredients in the proof of Theorem \ref{thm:main}. 
\begin{thm}
\label{thm:main-irred} Let $r\in\mathbb{N}$, and $\Gamma=F_{r}$.
Then 
\[
E(\mathcal{X}_{\Gamma}^{irr}SL_{n})=E(\mathcal{X}_{\Gamma}^{irr}PGL_{n}).\quad\forall n\in\mathbb{N}.
\]

\end{thm}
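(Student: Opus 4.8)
The plan is to compare both irreducible loci with the corresponding $GL_n$-variety, exploiting that $\Gamma=F_r$ is free. Since $H^2(F_r,\mu_n)=0$, every homomorphism $F_r\to PGL_n$ lifts to $SL_n$, and coordinatewise the projection $SL_n\to PGL_n$ exhibits $(PGL_n^r)^{irr}$ as the quotient of $(SL_n^r)^{irr}$ by the free action of $A:=H^1(F_r,\mu_n)\cong\mu_n^r$ rescaling the $i$-th generator by an $n$-th root of unity (equivalently, twisting a representation by an order-$n$ character). By Schur's lemma $PGL_n$ acts freely on both irreducible loci, so passing to the geometric conjugation quotients, and using that the $A$-action commutes with conjugation, I would obtain
\[
\mathcal{X}_r^{irr}PGL_n\;=\;\mathcal{X}_r^{irr}SL_n\,/\,A .
\]
Thus Theorem \ref{thm:main-irred} is equivalent to the assertion that the induced $A$-action on the mixed Hodge structure of $H^*_c(\mathcal{X}_r^{irr}SL_n)$ is \emph{$E$-polynomial trivial}: every $a\in A$ has the same Hodge-graded trace on compactly supported cohomology as the identity. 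Indeed, the cohomology of the quotient is the $A$-invariant part, so $E(\mathcal{X}_r^{irr}SL_n/A)=\tfrac1{|A|}\sum_{a\in A}E^{a}(\mathcal{X}_r^{irr}SL_n)$, where $E^a$ denotes the $a$-equivariant $E$-polynomial, and this equals $E(\mathcal{X}_r^{irr}SL_n)$ precisely when the action is trivial on cohomology.

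First I would record the clean half of the comparison, which goes through $GL_n$ and is unconditional. Writing $GL_n=\CC^*\cdot SL_n$ and using that the decomposition $B_i=\lambda_iA_i$ is unique up to $A=\mu_n^r$, one gets a $PGL_n$-equivariant identification $(GL_n^r)^{irr}\cong\big[(\CC^*)^r\times(SL_n^r)^{irr}\big]/A$, hence, after the free conjugation quotient, $\mathcal{X}_r^{irr}GL_n\cong\big[(\CC^*)^r\times\mathcal{X}_r^{irr}SL_n\big]/A$, with $A$ acting diagonally by translation on the torus factor and by the twisting action on the second. Because $A$ lies in the connected group $(\CC^*)^r$, translation is trivial on $H^*_c((\CC^*)^r)$, and averaging the equivariant $E$-polynomials over $A$ yields
\[
E(\mathcal{X}_r^{irr}GL_n)\;=\;(uv-1)^r\,E(\mathcal{X}_r^{irr}PGL_n).
\]
Dually, the determinant map $\mathcal{X}_r^{irr}GL_n\to(\CC^*)^r$ presents the same space as a fiber bundle with fiber $\mathcal{X}_r^{irr}SL_n$ over the $n$-th power cover of $(\CC^*)^r$, whose monodromy on $H^*_c$ of the fiber is exactly the $A$-action above. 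So the theorem is also equivalent to $E(\mathcal{X}_r^{irr}GL_n)=(uv-1)^r\,E(\mathcal{X}_r^{irr}SL_n)$, i.e. to triviality of this monodromy.

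The hard part will be exactly this triviality. No soft argument suffices: the $A$-action does not extend to a connected group of automorphisms of $\mathcal{X}_r^{irr}SL_n$, since scaling one factor leaves $SL_n$ unless the scalar is an $n$-th root of unity, and for surface groups the analogous $H^1(\Sigma,\mu_n)$-action is genuinely nontrivial — its failure is the content of topological mirror symmetry. Geometrically (matching the methods announced in the abstract) I would try to prove that the fiber-restriction $H^*_c(\mathcal{X}_r^{irr}GL_n)\to H^*_c(\mathcal{X}_r^{irr}SL_n)$ is surjective: the connected torus of character twists acts trivially on $H^*_c$ of the $GL_n$-variety, and any class extending to the total space is automatically monodromy-invariant, so a Leray–Hirsch-type surjectivity would force the $A$-action on the fiber to be trivial. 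Establishing this surjectivity is where the free-group structure must enter, presumably through the polystable-type stratification of \cite{FNZ}, which builds each stratum from irreducible loci of lower rank via symmetric products and so permits an induction on $n$.

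Alternatively — and I expect this to be the most robust route — I would count $\mathbb{F}_q$-points. These character varieties are of Hodge--Tate type, so by Katz's theorem (appendix to \cite{HRV}) the $E$-polynomial is determined by the point count, and it suffices to prove $|\mathcal{X}_r^{irr}SL_n(\mathbb{F}_q)|=|\mathcal{X}_r^{irr}PGL_n(\mathbb{F}_q)|$ as polynomials in $q$. Here Lang's theorem kills $H^1(\mathbb{F}_q,PGL_n)$, reducing both sides to orbit counts of irreducible tuples, and the only discrepancy between the $SL_n$- and $PGL_n$-counts is the Galois cohomology $\mathbb{F}_q^*/(\mathbb{F}_q^*)^n=\mu_{\gcd(n,q-1)}$; using the Mozgovoy--Reineke formulas for $GL_n$ over finite fields together with plethystic inversion to isolate the irreducible contribution, one checks that this discrepancy cancels as a polynomial identity in $q$ (for odd $n$ this recovers Mozgovoy's argument, and general $n$ is the remaining computation). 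Combining either route with the unconditional identity for $GL_n$ then gives $E(\mathcal{X}_r^{irr}SL_n)=E(\mathcal{X}_r^{irr}PGL_n)$. In both approaches the main obstacle is to control the ``variant'' part of the $SL_n$-cohomology and show it contributes equally under every $a\in A$; the input that makes this possible, in contrast to the surface-group case, is the product and symmetric-product structure special to free-group character varieties.
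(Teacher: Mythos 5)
Your reduction is exactly the paper's: identify $\mathcal{X}_r^{irr}PGL_n$ with the quotient of $\mathcal{X}_r^{irr}SL_n$ by $A=\hom(F_r,\mathbb{Z}_n)\cong\mathbb{Z}_n^r$ acting by twisting, record the unconditional identity $E(\mathcal{X}_r^{irr}GL_n)=(uv-1)^r\,E(\mathcal{X}_r^{irr}PGL_n)$ (Proposition \ref{prop:RPG-fibration} for the stratum $[n]$), and reduce the theorem to the triviality of the $A$-action on $H_c^*(\mathcal{X}_r^{irr}SL_n)$ — this is precisely the paper's Theorem \ref{thm:irred-Zn-action}. Up to this point your argument is sound, and you are right that no soft connectedness argument applies directly to the irreducible stratum and that the analogous action in the surface-group case is genuinely nontrivial. (One small slip: by Schur's lemma $PGL_n$ acts freely on irreducible $SL_n$-representations, but not on irreducible $PGL_n$-representations — e.g.\ the quaternionic pair in $PU(2)$ has a nontrivial finite stabilizer — which is why $\mathcal{X}_r^{irr}PGL_n$ is only an orbifold; this is harmless but worth noting.)

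The genuine gap is that neither of your two routes proves the triviality, which is the entire content of the theorem. Route A (Leray--Hirsch surjectivity of the restriction from the $GL_n$-variety to the fiber) is only a sufficient condition; you give no argument for that surjectivity beyond invoking ``presumably'' an induction through the stratification of \cite{FNZ}, and such an induction is exactly the nontrivial work. Route B rests on premises that are false or open: Katz's theorem requires polynomial count, not Hodge--Tate type, and neither property is known here — the paper explicitly states that Hodge--Tate type is open beyond $n=2,3$, and that the arithmetic argument ($SL_n(\mathbb{F}_p)\cong PGL_n(\mathbb{F}_p)$ when $(n,p-1)=1$) covers only odd $n$; for even $n$ every odd prime has $\gcd(n,p-1)\geq 2$, so the group $\mu_{\gcd(n,q-1)}$ never disappears, and your ``one checks that this discrepancy cancels'' is precisely the missing computation (in the paper, polynomial count for $\mathcal{X}_rSL_n$ with $n$ even is a \emph{consequence} of the main theorem, not an input). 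What actually closes the gap in the paper is a mechanism absent from your proposal: pass to compact representation spaces via the $C_{r,n}$-equivariant deformation retraction of \cite{FL1} from $\mathcal{X}_rSL_n$ to $\hom(F_r,SU(n))/PU(n)$; on the \emph{full} space $SU(n)^r$ the twisting action is the restriction of left translation by the connected group $SU(n)^r$, hence trivial on cohomology (Corollary \ref{cor:action-on-total}); on each strictly polystable stratum the action is killed by explicit homotopies compatible with the determinant condition (Lemma \ref{lem:action-on-strata}); triviality then propagates to the open irreducible stratum by the 5-lemma applied to the long exact sequences of compactly supported cohomology of the closed filtration by polystable type (Proposition \ref{prop:snake-cohomology}, Lemma \ref{lem:induction}); finally, equivariant cohomology descends the isomorphism through the conjugation quotient, and algebraicity of $\mathcal{X}_r^{irr}SL_n\to\mathcal{X}_r^{irr}PGL_n$ upgrades it to mixed Hodge structures (Proposition \ref{prop:SUPUisomorphisms}, Theorem \ref{thm:equality-irr-PGL-SL}). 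Without these steps, or a genuine substitute for them, your proposal does not prove the theorem.
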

For the proof of this result, we use crucially the fact that free
group character varieties admit a strong deformation retraction to
analogous spaces of representations into $SU(n)$ and $PU(n)$, that
follow from \cite{FL1}.

The outline of the article is as follows. In Section 2, we review
standard facts on mixed Hodge structures and polynomial invariants,
including their equivariant versions. Section 3 deals with character
varieties in the context of affine GIT, which is enough to relate
the $E$-polynomials of $\mathcal{X}_{F_{r}}PGL_{n}$ with those of
$\mathcal{X}_{F_{r}}GL_{n}$. The polystable type stratification from
\cite{FNZ} is recalled in Section 4 for the $GL_{n}\equiv GL(n,\mathbb{C})$
case, and defined for the cases $SL_{n}$ and $PGL_{n}$; then we
provide the proof of the main theorem, assuming Theorem \ref{thm:main-irred}.
This second theorem is proved in Section 5, by examining the action
of $\hom(F_{r},Z)$ in the cohomology of $\hom(F_{r},SL_{n})$, where
$Z\cong\mathbb{Z}_{n}$ is the center of $SL_{n}$. Finally, Section
6 is devoted to describing a finite algorithm to obtain all the Serre
polynomials and the Euler characteristics of all the strata $\mathcal{X}_{F_{r}}^{[k]}G$,
for all partitions $[k]\in\mathcal{P}_{n}$ and $G=GL_{n}$, $SL_{n}$
or $PGL_{n}$. The algorithm uses formulae of Mozgovoy-Reineke for
$E(\mathcal{\mathcal{X}}_{F_{r}}^{irr}GL_{n})$, the irreducible $GL_{n}$
case \cite{MR}. Our main results were announced at the ISAAC conference
2019 \cite{FNSZ}.

\subsection{Comparison with related results in the literature }

For the benefit of the reader, we outline here some related previous
results, without pretending to be exhaustive, and summarize the main
novelties in our approach. For the surface group case (where $\Gamma=\pi_{1}(\Sigma)$
with $\Sigma$ a compact orientable surface, and related groups) the
calculations of Poincaré polynomials of $\mathcal{X}_{\Gamma}G$ started
with Hitchin and Gothen (for $G=SL_{n}$, $n=2,3$, see \cite{Hi,Got}),
and have been pursued more recently by García-Prada, Heinloth, Schmitt,
Hausel, Letellier, Mellit, Rodriguez-Villegas, Schiffmann and others,
who also considered the parabolic version of these character varieties
(see \cite{GPH,GPHS,HRV,Le,Me,Sc}).

Many of those recent results also use arithmetic methods: it is shown
that the number of points of the corresponding moduli space over finite
fields is given by a polynomial which, by Katz's theorem mentioned
above, coincides with the $E$-polynomial of $\mathcal{X}_{\Gamma}G$.
Then, in the smooth case, this allows the derivation of the Poincaré
polynomial from the $E$-polynomial.

The equality of \emph{stringy $E$-polynomials} (in the sense of Batyrev-Dais
\cite{BD}) of moduli spaces of $G$-Higgs bundles, for $SL_{n}$
and $PGL_{n}$, in the coprime case, was established by Hausel-Thaddeus,
for $n=2,3$, in \cite{HT}, and by Groechenig-Wyss-Ziegler for all
$n$ in \cite{GWZ}. More recently, Gothen-Oliveira considered the
parabolic version in \cite{GO}. In another direction, the \emph{full
Hodge-Deligne} polynomials of \emph{free abelian} group character
varieties were computed in \cite{FS}.

Most of the above results were obtained only in the smooth/orbifold
cases of the corresponding moduli spaces. On the other hand, for many
important classes of \emph{singular character varieties}, such as
\emph{free groups} or surface groups $\pi_{1}\Sigma$ \emph{without
twisting} (corresponding to degree zero bundles) explicitly computable
formulas for the $E$-polynomials are very hard to obtain. In the
articles of Lawton, Logares, Mart\'{i}nez, Muñoz and Newstead (by
using geometric methods, see \cite{LMN,Ma,MM} for surface groups
and \cite{LM} for the free group) and of Cavazos-Lawton and Baraglia-Hekmati
(\cite{CL} and \cite{BH}, using arithmetic methods), the Serre polynomials
were computed for several character varieties, with $G=GL_{n}$, $SL_{n}$
and $PGL_{n}$ for small values of $n$, but the computations quickly
become intractable for $n$ higher than 3. Recently, $SL_{n}$-character
varieties of surface groups were found to give rise to a Lax monoidal
topological quantum field theory (see \cite{GPLM}). In the case of
free group character varieties, the $E$-polynomials were obtained
in \cite{MR}, by point counting over finite fields, for all $GL_{n}$,
and in \cite{BH} for $SL_{n}$ with $n=2,3$.

Our present results and methods differ from the previous literature
in the following aspects. We consider the singular character variety
of the free group, for $G=SL_{n}$ and \emph{every} $n\in\mathbb{N}$,
and consider the standard compactly supported $E$-polynomial (not
the stringy one). We extend the stratification by polystable type
(described in \cite{FNZ} for $GL_{n}$) to the cases of $PGL_{n}$
and $SL_{n}$, and carefully examine the action of the centre of $SL_{n}$
on the cohomology of several spaces, to prove the main results. Our
Theorems \ref{thm:main} and \ref{thm:main-irred} do not use point
counting methods over finite fields, and we only use the formulas
of Mozgovoy-Reineke for $GL_{n}$-character varieties of $F_{r}$
(see \cite{MR}) in the last section, to provide explicit formulas
for Serre polynomials and Euler characteristics for all $\mathcal{X}_{F_{r}}SL_{n}$
and all polystable strata.

\subsection*{Acknowledgements}

We would like to thank many interesting and useful conversations with
several colleagues on topics around $E$-polynomials in particular
G. Granja, S. Lawton, M. Logares, V. Muñoz, A. Oliveira, F. Rodriguez-Villegas,
J. Silva, and the anonymous referee for useful suggestions. We also
thank the organizers of the Special Session on Geometry of Representation
Spaces of the AMS Joint meeting 2019 (Baltimore, USA), and the Special
Session on Complex Geometry of the Conference ISAAC 2019 (Aveiro,
Portugal) where these results were presented.

\section{Mixed Hodge structures and Serre polynomials}

This section recalls some standard facts on mixed Hodge structures
and polynomial invariants, introducing terminology and notation that
will be used throughout.

Let $X$ be a quasi-projective variety over $\mathbb{C}$, of complex
dimension $d$, which may be singular, not complete, and/or not irreducible.
Following Deligne (c.f. \cite{De}, \cite{PS}), the compactly supported
cohomology $H_{c}^{*}(X):=H_{c}^{*}(X,\mathbb{C})$ is endowed with
a mixed Hodge structure. Denote the corresponding mixed Hodge numbers
by 
\[
h^{k,p,q}(X)=\dim_{\mathbb{C}}H_{c}^{k,p,q}(X)\in\mathbb{N}_{0},
\]
for $k\in\{0,\cdots,2d\}$ and $p,q\in\{0,\cdots,k\}$. We say that
$(p,q)$ are $k$-weights of $X$, when $h^{k,p,q}\neq0$.

Mixed Hodge numbers verify $h^{k,p,q}(X)=h^{k,q,p}(X)$, and $\dim_{\mathbb{C}}H_{c}^{k}(X)=\sum_{p,q}h^{k,p,q}(X)$,
so they provide the (compactly supported) Betti numbers, which are
easily translated to the usual Betti numbers, in the smooth case,
by Poincaré duality.

Hodge numbers yield the so-called mixed Hodge polynomial of $X$,
\begin{equation}
\mu(X;\,t,u,v):=\sum_{k,p,q}h^{k,p,q}(X)\ t^{k}u^{p}v^{q}\in\mathbb{\mathbb{N}}_{0}[t,u,v],\label{eq:mu}
\end{equation}
on three variables. The mixed Hodge polynomial specializes to the
(compactly supported) Poincaré polynomial by setting $u=v=1$, $P_{t}^{c}(X)=\mu(X;\,t,1,1).$
Again, this provides the usual Poincaré polynomial in the smooth situation.

By substituting $t=-1$, mixed Hodge polynomials become a very useful
generalization of the Euler characteristic, called the \emph{Serre
polynomial} or $E$\emph{-polynomial} of $X$: 
\[
E(X;\,u,v):=\sum_{k,p,q}(-1)^{k}h^{k,p,q}(X)\ u^{p}v^{q}\in\mathbb{Z}[u,v].
\]
From the $E$-polynomial we can compute the Euler characteristic of
$X$ as $\chi(X)=E(X;\,1,1)=\mu(X;\,-1,1,1)$ (recall that the compactly
supported Euler characteristic equals the usual one for complex quasi-projective
varieties).

When $X$ is smooth and projective, its Hodge structure is \emph{pure}
(for each $k$, the only $k$-weights are of the form $(p,k-p)$ with
$p\in\{0,\cdots,k\}$), and the $E$-polynomial actually determines
the Poincaré polynomial $P_{t}=P_{t}^{c}$.

In the present article, we deal with varieties which are neither smooth
nor complete, but often are of \emph{Hodge-Tate type} (also called
\emph{balanced} type), for which all the $k$-weights are of the form
$(p,p)$ with $p\in\{0,\cdots,k\}$. This restriction on weights holds
for complex (affine) algebraic groups (see, for example \cite{DL,Jo})
and smooth toric varieties, among others classes.

Poincaré, mixed Hodge and Serre polynomials satisfy a multiplicative
property with respect to Cartesian products $\mu(X\times Y)=\mu(X)\mu(Y)$
(coming from Künneth theorem, see \cite{PS}). The big computational
advantage of $E(X)$, as compared to $\mu(X)$ or $P^{c}(X)$ is that
it also satisfies an \emph{additive property with respect to stratifications}
by locally closed (in the Zariski topology) strata: whenever $X$
has a closed subvariety $Z\subset X$ we have 
\[
E(X)=E(Z)+E(X\setminus Z),
\]
(see, eg. \cite{PS}) which generalizes the well known analogous statement
for $\chi$. 
\begin{rem}
The terminology for $E$-polynomials is not standard, and some authors
refer to them as \emph{Hodge-Deligne} (mostly when dealing with pure
Hodge structures), and others as \emph{virtual Poincaré or Serre polynomials}.
After a literature review process, we are using \emph{Serre polynomial},
following the oldest references (see, for example \cite{GL}), and
paying tribute to the ideas of Serre on the role of virtual dimensions
for additivity in the weight filtration on mixed Hodge structures
(see \cite{To}). 
\end{rem}

\subsection{Equivariant Serre polynomials and fibrations}

We will also need the multiplicative property of the $E$-polynomial
under certain algebraic fibrations, and more generally, for the \emph{equivariant
$E$-polynomial}, when these fibrations are acted by a finite group. 
\begin{defn}
\label{def:alg-W-fibration}Let $\pi:X\to B$ be a morphism of quasi-projective
varieties and $W$ be a finite group acting algebraically on $X$,
and preserving the fibers of $\pi$. Assume also that all fibers $\pi^{-1}(b)$,
$b\in B$, are $W$-isomorphic to a given variety $F$. In this situation,
we call 
\begin{equation}
F\to X\to B\label{eq:W-fibration}
\end{equation}
an \emph{algebraic $W$-fibration.} 
\end{defn}
Given an algebraic $W$-fibration $F\to X\to B$, the mixed Hodge
structures on the (compactly supported) cohomology groups of $F$
and $X$ are representations of $W$, which we denote by $[H_{c}^{k,p,q}(F)]$
and similarly for $X$. This allows us to define the $W$-equivariant
$E$-polynomials: 
\[
E^{W}(X;\,u,v)=\sum_{k,p,q}(-1)^{k}[H_{c}^{k,p,q}(X)]\ u^{p}v^{q}\in R(W)[u,v],
\]
where $R(W)$ denotes the representation ring of $W$. By taking the
dimensions of the fixed subspaces $[H_{c}^{k,p,q}(X)]^{W}$, under
$W$, we recover the $E$-polynomial of the quotient variety $X/W$
(see \cite{DL}, \cite{FS}).

We have the following fundamental result. 
\begin{thm}
\label{thm:W-fibration} \cite{DL,LMN} Let $W$ be a finite group
and consider an algebraic $W$-fibration as above: 
\[
F\to X\to B.
\]
Assume either:\\
 (i) the fibration is locally trivial in the Zariski topology of $B$,
or\\
 (ii) $F$, $X$ and $B$ are smooth, the fibration is locally trivial
in the complex analytic topology, and $\pi_{1}(B)$ acts trivially
on $H_{c}^{*}(F)$ (ie, the monodromy is trivial), or\\
 (iii) $X$, $B$ are smooth and $F$ is a complex connected Lie group.\\
 Then 
\[
E^{W}(X)=E^{W}(F)\cdot E(B).
\]
Moreover, if $F$ and $B$ are of Hodge-Tate type, then so is $X$.\end{thm}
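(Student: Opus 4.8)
The plan is to use that the equivariant $E$-polynomial $E^{W}$ is the $W$-equivariant Euler characteristic of the mixed Hodge structure on compactly supported cohomology, and that such Euler characteristics are additive over locally closed stratifications and multiplicative under Künneth (an identification compatible both with mixed Hodge structures and with the $W$-action). In each of the three cases I would reduce $E^{W}(X)$ to product contributions $E^{W}(F)\cdot E(U)$ over pieces $U\subseteq B$ and then reassemble. Throughout one keeps in mind that, since $W$ preserves the fibers of $\pi$, it acts trivially on $B$, so $H_{c}^{*}(B)$ is a trivial $W$-module and the factor $E(B)$ in the statement is an ordinary (non-equivariant) polynomial.

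For case (i) I would argue by Noetherian induction on $\dim B$. Zariski local triviality provides a nonempty Zariski-open $U\subseteq B$ with a $W$-equivariant isomorphism $\pi^{-1}(U)\cong F\times U$, the $W$-action carried on the $F$-factor; the equivariant Künneth theorem then gives $E^{W}(\pi^{-1}(U))=E^{W}(F)\cdot E(U)$. Setting $Z=B\setminus U$, the induction hypothesis gives $E^{W}(\pi^{-1}(Z))=E^{W}(F)\cdot E(Z)$, and additivity of $E^{W}$ over the decomposition $X=\pi^{-1}(Z)\sqcup\pi^{-1}(U)$, combined with $E(B)=E(Z)+E(U)$, yields the result.

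For cases (ii) and (iii) the localization of (i) is unavailable, so I would instead use the compactly supported Leray spectral sequence of $\pi$, $E_{2}^{a,b}=H_{c}^{a}(B,R^{b}\pi_{!}\,\QQ)\Rightarrow H_{c}^{a+b}(X)$, which under the stated smoothness hypotheses is a spectral sequence of mixed Hodge structures and is compatible with the finite $W$-action. Triviality of the monodromy --- assumed in (ii), and in (iii) a consequence of the connectedness of the fiber group $F$ (the structure group is then connected and acts on $H_{c}^{*}(F)$ through homotopically trivial automorphisms), so that (iii) reduces to (ii) --- identifies each higher direct image $R^{b}\pi_{!}\,\QQ$ with the constant sheaf of stalk $H_{c}^{b}(F)$. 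Hence $E_{2}^{a,b}\cong H_{c}^{a}(B)\otimes H_{c}^{b}(F)$ as bigraded $W$-equivariant mixed Hodge structures. Since the ($W$-equivariant, Hodge-graded) Euler characteristic is insensitive to the differentials (the alternating sum over $E_{2}$ equals that over $E_{\infty}$, as each $d_{r}$ cancels terms of equal Hodge type in cohomological degrees differing by one), this Künneth identification delivers $E^{W}(X)=E^{W}(F)\cdot E(B)$ at once.

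The main obstacle is precisely the verification that the Leray spectral sequence underlies a spectral sequence of mixed Hodge structures compatible with the $W$-action: this is where the smoothness hypotheses and Deligne's theory (as used in \cite{DL}) are essential, and where the connectedness of $F$ does the work in case (iii) by excluding nontrivial monodromy. Granting multiplicativity, the Hodge-Tate claim is then immediate: the weights of $H_{c}^{*}(X)$ are read off from its associated graded, built out of tensor products $H_{c}^{a}(B)\otimes H_{c}^{b}(F)$, and a tensor product of Hodge-Tate structures (all weights of type $(p,p)$) is again of Hodge-Tate type.
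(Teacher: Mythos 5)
Your overall strategy is sound, and it is essentially the one behind the references that the paper itself invokes here (the paper offers no argument beyond the citation to \cite{DL}, \cite{FS}, \cite{LMN}): additivity plus equivariant K\"unneth over a stratification for (i), the compactly supported Leray spectral sequence of mixed Hodge structures for (ii), reduction of (iii) to (ii) by using connectedness of $F$ to kill monodromy, and the fact that tensor products of Hodge--Tate structures are Hodge--Tate for the final claim. Cases (ii) and (iii) and the Hodge--Tate statement are fine in outline, with the honest caveat you yourself state (compatibility of compactly supported Leray with mixed Hodge structures and with the $W$-action is precisely what \cite{DL} supplies); note also that in (iii) you tacitly treat the fibration as an analytically locally trivial bundle on which $F$ acts by translations, which is the intended situation but is not literally contained in the paper's hypothesis.

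The genuine gap is the first step of case (i): you claim that Zariski local triviality yields a nonempty open $U\subseteq B$ and a \emph{$W$-equivariant} isomorphism $\pi^{-1}(U)\cong F\times U$ with $W$ acting only on the $F$-factor. Definition \ref{def:alg-W-fibration} only requires each individual fiber to be $W$-isomorphic to $F$, and hypothesis (i) only requires the underlying fibration to be Zariski locally trivial; an equivariant trivialization need not exist. Concretely, take $X=\mathbb{C}^{*}\times\mathbb{C}^{*}$, $B=\mathbb{C}^{*}$, $\pi(z,b)=b$, and $W=\mathbb{Z}_{2}$ acting by $g\cdot(z,b)=(b/z,\,b)$: every fiber is $W$-isomorphic to $F=(\mathbb{C}^{*},\,z\mapsto 1/z)$ (conjugate by a choice of square root of $b$), and the underlying fibration is trivial, yet a $W$-equivariant trivialization over a Zariski open $U$ would produce an algebraic square root of $b$ on $U$, which does not exist. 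The conclusion of the theorem still holds in this example --- both sides equal $u^{2}v^{2}-uv-uv\,\sigma+\sigma$ in $R(\mathbb{Z}_{2})[u,v]$, with $\sigma$ the sign representation --- so this is a gap in your proof, not a counterexample to the statement. To close it you need one further idea: over each \emph{connected} trivializing stratum $U$, the $W$-action in a non-equivariant trivialization has the form $g\cdot(f,u)=(\psi_{u}(g)f,\,u)$ with $\psi_{u}(g)\in\mathrm{Aut}(F)$ an algebraic family, and connectedness of $U$ forces the induced maps $\psi_{u}(g)^{*}$ on $H_{c}^{*}(F)$ to be independent of $u$ (equivalently, a $W$-action on the constant sheaf $R^{b}\pi_{!}\,\mathbb{Q}$ over connected $U$ is by a constant automorphism); this is what makes the K\"unneth identification $W$-equivariant at the level of $E$-polynomials. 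Alternatively, one can build equivariant local triviality into hypothesis (i) --- which is how such statements are set up in \cite{DL} --- but then this must be said explicitly, since it does not follow from the definitions as given.
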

\begin{proof}
See \cite[Thm. 6.1]{DL} or \cite{FS}. For (iii) and for the result
on Hodge-Tate (balanced) type see also \cite{LMN}. 
\end{proof}
Of course, when $W=1$ or the $W$ action is trivial, the hypothesis
imply the product formula $E(X)=E(F)\cdot E(B)$. 
\begin{example}
The condition of $F$ being connected in (iii) is necessary for the
multiplicative property to hold, even in the case of trivial $W$.
Indeed, the action of $F=\mathbb{Z}_{2}$ on $X=\mathbb{P}^{1}\times\mathbb{P}^{1}$
by permuting the entries provides a fibration onto $\mathbb{P}^{2}$,
the second symmetric power of $\mathbb{P}^{1}$: 
\[
\mathbb{Z}_{2}\rightarrow\mathbb{P}^{1}\times\mathbb{P}^{1}\rightarrow\sym^{2}(\mathbb{P}^{1})=\mathbb{P}^{2}\;,
\]
By elementary methods, the $E$-polynomials are $E(\mathbb{Z}_{2})=2$,
$E(\mathbb{P}^{1}\times\mathbb{P}^{1})=(1+uv)^{2}$ and $E(\mathbb{P}^{2})=1+uv+u^{2}v^{2}$,
which do not satisfy the multiplicative property. One can also check
that this algebraic fibration is not locally trivial in the Zariski
topology, but only in the analytic (strong) topology. 
\end{example}

\subsection{Special fibrations}

The notion of special group according to Serre and Grothendieck (c.f.
\cite{Gr,Se}) provides a useful criterion for applying Theorem \ref{thm:W-fibration}
to some algebraic fibrations.

By definition, a \emph{special group} is an algebraic group $H$ such
that every principal $H$-bundle is locally trivial in the Zariski
topology (c.f. \cite[p.11]{Gr}). In this context, a principal $H$-bundle
$\pi:X\to B$, will be called a \emph{special fibration}, whenever
$H$ is special. If, furthermore, a finite group $W$ acts on $X$
as in Definition \ref{def:alg-W-fibration}, we call $\pi$ a \emph{special
$W$-fibration}. Thus, Given a special $W$-fibration $H\to X\to B$,
the following is an immediate consequence of Theorem \ref{thm:W-fibration}(i). 
\begin{cor}
\label{cor:special-fibration} Let $X\to B$ be a special $W$-fibration
with fiber isomorphic to $H$. Then 
\[
E^{W}(X)=E^{W}(H)\cdot E(B).
\]

\end{cor}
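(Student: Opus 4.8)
The plan is to observe that the definition of a special $W$-fibration is arranged precisely so that hypothesis (i) of Theorem \ref{thm:W-fibration} holds automatically, after which the asserted formula follows by direct substitution. First I would unwind the definitions: by assumption $\pi\colon X\to B$ is a principal $H$-bundle whose structure group $H$ is special, and $W$ acts algebraically on $X$ preserving the fibers, with every fiber $W$-isomorphic to a fixed variety $F\cong H$ in the sense of Definition \ref{def:alg-W-fibration}.

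The single substantive step is to produce the Zariski local triviality required by Theorem \ref{thm:W-fibration}(i). This is exactly the content of $H$ being special: by definition (c.f. \cite[p.11]{Gr}), every principal $H$-bundle is locally trivial in the Zariski topology of its base. Applying this to $\pi$ itself yields a Zariski-open cover $\{U_{\alpha}\}$ of $B$ together with trivializations $\pi^{-1}(U_{\alpha})\cong U_{\alpha}\times H$. Hence the algebraic $W$-fibration $H\to X\to B$ satisfies hypothesis (i) of the theorem.

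It then remains only to invoke Theorem \ref{thm:W-fibration}(i), which gives $E^{W}(X)=E^{W}(F)\cdot E(B)$. Since the fiber $F$ is $W$-isomorphic to $H$, the equivariant mixed Hodge structures on $H_{c}^{*}(F)$ and $H_{c}^{*}(H)$ coincide as $W$-representations, so $E^{W}(F)=E^{W}(H)$, and the claimed identity $E^{W}(X)=E^{W}(H)\cdot E(B)$ follows.

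As the excerpt itself advertises, there is essentially no obstacle here: the corollary is an immediate consequence of the theorem, the only thing to verify being that the word \emph{special} supplies Zariski local triviality for free. The one point worth stating carefully is that the equivariant polynomial $E^{W}(F)$ is well defined precisely because all fibers carry $W$-isomorphic structures, so that $E^{W}(X)$ is computed from a single $W$-representation on $H_{c}^{*}(F)$ rather than from fiber-dependent data; with this in place the substitution $E^{W}(F)=E^{W}(H)$ is legitimate.
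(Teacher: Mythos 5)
Your proof is correct and follows exactly the route the paper intends: the paper itself presents this corollary as an immediate consequence of Theorem \ref{thm:W-fibration}(i), with the definition of \emph{special} group supplying the Zariski local triviality hypothesis, which is precisely the single substantive step you identify and verify. The additional care you take in noting that $E^{W}(F)=E^{W}(H)$ via the $W$-isomorphism of fibers is a worthwhile detail the paper leaves implicit, but it does not constitute a different argument.
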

Now, let $G$ be a connected complex algebraic reductive group, with
center $Z\subset G$, and denote by $PG=G/Z$ its adjoint group. In
case $Z$ is connected, we obtain a simple relation between the Serre
polynomials of $G$, $Z$ and $PG$. 
\begin{prop}
\label{prop:PG-fibration}Let $G$ be a complex reductive group with
connected center $Z$. Then $E(G)=E(PG)\cdot E(Z)$. \end{prop}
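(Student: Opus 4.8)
The plan is to realize the adjoint quotient map $\pi\colon G\to G/Z=PG$ as a special fibration and then invoke Corollary \ref{cor:special-fibration} with trivial $W$. First I would observe that $Z$, being central, acts on $G$ by (left, equivalently right) multiplication, and that this action is free; since the quotient is by definition $PG$, the map $\pi$ is a principal $Z$-bundle.

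The key structural input is the identification of the fiber group $Z$. Because $G$ is reductive and $Z$ is its center, $Z$ is diagonalizable (it is contained in every maximal torus); being connected by hypothesis, $Z$ is therefore an algebraic torus $(\CC^{*})^{k}$ for some $k$. Tori are special groups: $\CC^{*}=\GL_{1}$ is special, and finite products of special groups are special (c.f. \cite{Gr,Se}). Consequently the principal $Z$-bundle $\pi$ is locally trivial in the Zariski topology, i.e., it is a special fibration in the sense introduced above.

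Applying Corollary \ref{cor:special-fibration} with $W$ trivial then yields $E(G)=E(Z)\cdot E(PG)$, which is the claimed identity. One may additionally record that, since $Z$ and $PG$ are of Hodge-Tate type, so is $G$, consistent with the final assertion of Theorem \ref{thm:W-fibration}.

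The only real point requiring care is the identification of $Z$ as a torus and the recollection that tori are special; once Zariski-local triviality is secured by the special-group property, the multiplicativity of $E$ follows immediately. In particular, no monodromy or smoothness hypotheses need to be checked, since the special-group assumption provides Zariski-local triviality directly and we stay within case (i) of Theorem \ref{thm:W-fibration}, bypassing the analytic-topology subtleties of cases (ii) and (iii).
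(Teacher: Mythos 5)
Your proposal is correct and follows essentially the same route as the paper: exhibit $Z\to G\to PG$ as a principal $Z$-bundle, identify the connected center $Z$ as an algebraic torus (the paper argues via the multiplicative-type-times-unipotent decomposition of the center and reductivity, you argue via $Z$ being diagonalizable inside a maximal torus — a minor variation), invoke that tori are special in the sense of Serre--Grothendieck, and apply Corollary \ref{cor:special-fibration} with trivial $W$. No gaps.
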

\begin{proof}
Since $PG$ is defined to be a quotient by a subgroup, the following
fibration 
\begin{equation}
Z\to G\to PG,\label{eq:ZG-G-PG}
\end{equation}
is a principal $Z$-bundle. Since $G$ is complex, its center $Z$
is the product of a group of multiplicative type and a unipotent subgroup.
Given that $G$ is reductive, the unipotent subgroup is trivial. Since
$Z$ is connected by hypothesis, $Z$ is a torus, that is $Z\cong(\mathbb{C}^{\ast})^{l}$,
$l=\dim Z$. Since a torus is a \textit{\emph{special}}\emph{ }group
by \cite{Se}, \eqref{eq:ZG-G-PG} is a special fibration and the
result follows from Corollary \ref{cor:special-fibration} (with $W$
being the trivial group).\end{proof}
\begin{example}
\label{exa:PGL-section} Since $E(\mathbb{C}^{*})=uv-1$, the formula
$E(GL_{n})=(uv-1)E(PGL_{n})$ follows from the special fibration 
\[
\mathbb{C}^{*}\to GL_{n}\to PGL_{n},
\]
which will be very important later on, and can be directly shown to
be locally trivial in the Zariski topology. By contrast, the determinant
fibration 
\[
SL_{n}\to GL_{n}\stackrel{\det}{\to}\mathbb{C}^{*}
\]
is not trivial in the Zariski topology, although we still have $E(GL_{n})=(uv-1)E(SL_{n})$. \end{example}
\begin{rem}
\label{rem:Hodge-Tate-open}As mentioned before, complex (affine)
algebraic groups $G$ are of Hodge-Tate type. Hence, their mixed Hodge
polynomials $\mu(G;t,u,v)$ reduce to a two variable polynomial (with
variables $t$ and $uv$), and their $E$-polynomials depend only
on the product $uv$. On the other hand, given a variety $X$ whose
$E$-polynomial is a function of only $x=uv$, it is not necessarily
true that $X$ is of Hodge-Tate type. For example, if we had $\mu(X;t,u,v)=1+tu+t^{2}u+t^{2}uv$,
then $X$ could not be of Hodge-Tate type, but $E(X)=1+uv=E(\mathbb{P}^{1})$
is a one-variable polynomial in $x:=uv$. We strongly believe that
the character varieties studied in this paper are also of Hodge-Tate
(balanced) type. Indeed, the methods in \cite{LMN,LM} show this is
the case for $n=2$ and 3. However, as far as we know, the general
case seems to be an open problem (see also Conjecture \ref{conj:Langlands-dual}). 
\end{rem}


\section{Character Varieties for a group and its adjoint}

As before, let $G$ be a connected complex reductive algebraic group
with center $Z$. In this section, we recall some aspects of character
varieties and their construction via affine GIT (Geometric Invariant
Theory). This is applied to provide a simple relation between the
Serre polynomials of $\mathcal{X}_{\Gamma}GL_{n}$ and of $\mathcal{X}_{\Gamma}PGL_{n}$,
for the free group $\Gamma=F_{r}$.

\subsection{Character varieties as GIT quotients.}

Let $\Gamma$ be a finitely presented group, and denote by 
\[
\mathcal{R}_{\Gamma}G=\hom(\Gamma,G),
\]
the algebraic variety of representations of $\Gamma$ in $G$, where
$\rho\in\mathcal{R}_{\Gamma}G$ is defined by the image of the generators
of $\Gamma$, $\rho(\gamma)$, satisfying the relations in $\Gamma$.
The moduli space of representations of $\Gamma$ into $G$ is the
$G$-character variety of $\Gamma$ 
\[
\mathcal{X}_{\Gamma}G:=\hom(\Gamma,G)\quot G,
\]
defined as the affine GIT quotient under the algebraic action of $G$
on $\mathcal{R}_{\Gamma}G$ by conjugation of representations. Given
that a GIT quotient identifies those orbits whose closure has a non-empty
intersection, we describe the quotient by the unique closed point
in each equivalence class, called the \emph{polystable representations},
which we define below. Given a representation $\rho\in\mathcal{R}_{\Gamma}G$,
denote by $Z_{\rho}$ the centralizer of $\rho(\Gamma)$ inside $G$.
Note that $ZG\subset Z_{\rho}$, since the center commutes with any
representation. Let us call the subgroup $G_{\mathcal{R}_{\Gamma}G}:=\bigcap_{\rho\in\mathcal{R}_{\Gamma}G}Z_{\rho}$
the \emph{center of the action}, since it acts trivially, and $G/G_{\mathcal{R}_{\Gamma}G}$
acts effectively on $\mathcal{R}_{\Gamma}G$. Denote by $\psi_{\rho}$
the (effective) orbit map through $\rho$: 
\begin{eqnarray*}
\psi_{\rho}:G/G_{\mathcal{R}_{\Gamma}G} & \to & \mathcal{R}_{\Gamma}G\\
g & \mapsto & g\cdot\rho
\end{eqnarray*}

\begin{defn}
\label{def:center-polystable}In the situation above, we say that
$\rho\in\mathcal{R}_{\Gamma}G$ is \emph{polystable} if the orbit
$G\cdot\rho:=\{g\rho g^{-1}:\ g\in G\}$ is closed (Zariski) in $\mathcal{R}_{\Gamma}G$.
We say that $\rho\in\mathcal{R}_{\Gamma}G$ is \emph{stable} if it
is polystable and $\psi_{\rho}$ is a proper map. 
\end{defn}
It can be shown that the subset of polystable representations in $\mathcal{R}_{\Gamma}G$,
denoted by $\mathcal{R}_{\Gamma}^{ps}G\subset\mathcal{R}_{\Gamma}G$,
is stratified by locally-closed subvarieties, yielding a quotient
which can be identified with the affine GIT quotient (see \cite{FL2}):
\[
\mathcal{X}_{\Gamma}G=\mathcal{R}_{\Gamma}G\quot G\cong\mathcal{R}_{\Gamma}^{ps}G/G.
\]

\begin{defn}
We say that $\rho\in\mathcal{R}_{\Gamma}G$ is irreducible if $\rho$
is polystable and $Z_{\rho}$ is a finite extension of $ZG$, and
call $\rho$ a good representation if it is irreducible and $Z_{\rho}=ZG$. 
\end{defn}
Denote by $\mathcal{R}_{\Gamma}^{irr}G\subset\mathcal{R}_{\Gamma}^{ps}G$
and by $\mathcal{R}_{\Gamma}^{g}G$ the subset of irreducible and
good representations, respectively, and set $\mathcal{X}_{\Gamma}^{irr}G:=\mathcal{R}_{\Gamma}^{irr}G/G$
and $\mathcal{X}_{\Gamma}^{g}G:=\mathcal{R}_{\Gamma}^{g}G/G$. It
is known that $\mathcal{R}_{\Gamma}^{irr}G$ is a quasi-projective
variety, since it is a Zariski open subset of $\mathcal{R}_{\Gamma}G$.
The character variety of good representations is a smooth algebraic
variety, by \cite{Sik}. For further details, we refer the reader
to \cite{FLR}, \cite{GLR} or \cite[Section 3]{FNZ}.

In the case when $G=GL_{n}$ or $G=SL_{n}$, Schur's lemma easily
implies the equivalence between good and irreducible representations. 
\begin{lem}
\label{lem:Schur}Let $G=GL_{n}$ or $G=SL_{n}$ and $\rho\in\hom(\Gamma,G)$
be polystable. Then $\rho$ is a good representation if and only if
it is irreducible. In particular $\mathcal{X}_{\Gamma}^{g}G=\mathcal{X}_{\Gamma}^{irr}G$.\end{lem}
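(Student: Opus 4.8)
The direction ``good $\Rightarrow$ irreducible'' is immediate from the definitions, so the whole content lies in the converse, and my plan is to extract it from Schur's lemma once polystability has been translated into a statement about $\Gamma$-modules. Throughout I would let $V=\CC^{n}$ be the standard representation of $G=\GL_{n}$ or $\SL_{n}$, and regard a representation $\rho\in\hom(\Gamma,G)$ as a $\Gamma$-module structure on $V$.

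First I would invoke the standard dictionary (as used in \cite{FL2,FNZ}) identifying polystability of $\rho$ under the conjugation action with semisimplicity of $V$ as a $\Gamma$-module. Writing the isotypic decomposition $V\cong\bigoplus_{i}V_{i}^{\oplus m_{i}}$, with the $V_{i}$ pairwise non-isomorphic irreducible $\Gamma$-modules of dimension $d_{i}$ and $\sum_{i}m_{i}d_{i}=n$, Schur's lemma computes the commutant $\End_{\Gamma}(V)\cong\prod_{i}\mathrm{Mat}_{m_{i}}(\CC)$. Passing to invertible elements then identifies the centralizer of $\rho(\Gamma)$: for $\GL_{n}$ one gets $Z_{\rho}\cong\prod_{i}\GL_{m_{i}}(\CC)$, and for $\SL_{n}$ the centralizer $Z_{\rho}$ is the intersection of this product with $\SL_{n}$, cut out by the single determinant condition $\prod_{i}\det(A_{i})^{d_{i}}=1$.

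Next I would compare dimensions with the center $ZG$. For $\GL_{n}$ we have $ZG=\CC^{\ast}\cdot I_{n}$, of dimension $1$, and $\dim Z_{\rho}=\sum_{i}m_{i}^{2}$; for $\SL_{n}$ we have $ZG=\mu_{n}\cdot I_{n}$, of dimension $0$, and $\dim Z_{\rho}=\sum_{i}m_{i}^{2}-1$. Since $ZG\subseteq Z_{\rho}$ always, the hypothesis that $\rho$ is irreducible --- that $Z_{\rho}$ be a finite extension of $ZG$, hence of equal dimension --- forces $\sum_{i}m_{i}^{2}=1$ in both cases. This equation admits only the solution of a single isotypic component with $m_{1}=1$, that is, $V$ is an irreducible $\Gamma$-module.

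In that case $\End_{\Gamma}(V)=\CC$, so $Z_{\rho}=\CC^{\ast}\cdot I_{n}=ZG$ for $\GL_{n}$ and $Z_{\rho}=\mu_{n}\cdot I_{n}=ZG$ for $\SL_{n}$; either way $Z_{\rho}=ZG$ exactly, which is the defining condition for $\rho$ to be good. This yields ``irreducible $\Rightarrow$ good'', and passing to quotients gives $\mathcal{X}_{\Gamma}^{g}G=\mathcal{X}_{\Gamma}^{irr}G$. The one step needing care is the very first reduction --- verifying that GIT-polystable orbits correspond exactly to semisimple modules and that the commutant acquires the stated block form --- since after that the conclusion is the elementary observation that $\sum_{i}m_{i}^{2}=1$ admits only a single multiplicity-one factor.
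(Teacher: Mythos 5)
Your proof is correct, and it reaches the conclusion by a route that differs from the paper's in how the two groups are handled. The paper's proof also hinges on Schur's lemma, but its structure is: first the $GL_{n}$ case, where irreducibility plus Schur gives $Z_{\rho}=\mathbb{C}^{*}=ZGL_{n}$ directly; then the $SL_{n}$ case is \emph{reduced} to the $GL_{n}$ case via the inclusion $i:SL_{n}\hookrightarrow GL_{n}$, using the relations $Z_{\rho}=Z_{i\circ\rho}\cap SL_{n}$ and $ZGL_{n}\cap SL_{n}=ZSL_{n}$: irreducibility of $\rho$ forces $i\circ\rho$ to be an irreducible, hence good, $GL_{n}$-representation, whence $Z_{\rho}=\mathbb{C}^{*}\cap SL_{n}=ZSL_{n}$. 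You instead treat both groups uniformly: polystability gives semisimplicity, the isotypic decomposition gives the commutant $\prod_{i}\mathrm{Mat}_{m_{i}}(\mathbb{C})$, and a dimension count forces $\sum_{i}m_{i}^{2}=1$. Your route buys two things. First, it makes explicit the bridge between the paper's \emph{abstract} definition of irreducibility ($Z_{\rho}$ a finite extension of $ZG$) and the \emph{module-theoretic} irreducibility needed to invoke Schur's lemma; the paper's own text silently conflates these two notions (``If $\rho$ is irreducible, Schur's lemma states...''), with the justification deferred to the reference [FL3] --- your dimension count is precisely the missing step. Second, it exhibits the full block structure of the centralizer of an arbitrary polystable representation (including the determinant-one locus $\prod_{i}\det(A_{i})^{d_{i}}=1$ for $SL_{n}$), which is the structure underlying the stratification by polystable type used throughout the rest of the paper. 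What the paper's route buys is brevity: reducing $SL_{n}$ to $GL_{n}$ via intersection of centralizers avoids any computation inside the commutant. The one hypothesis you lean on --- that closed conjugation orbits correspond exactly to semisimple $\Gamma$-modules --- is a genuine theorem rather than a formality, but it is standard for $GL_{n}$ and $SL_{n}$ (and the orbits agree for the two groups, since scalars act trivially by conjugation), and you correctly flag it as the step requiring care.
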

\begin{proof}
By definition, a good representation is irreducible, and the converse
follows from \cite{FL3}; we can also proceed as follows. First let
$G=GL_{n}$. If $\rho$ is irreducible, Schur's lemma states that
any $g\in G$ commuting with every $\rho(\gamma)$, $\gamma\in\Gamma$,
is central, so that $Z_{\rho}=ZGL_{n}=\mathbb{C}^{*}$. Therefore,
$\rho$ is good.

Now let $G=SL_{n}$. For any representation $\rho:\Gamma\to SL_{n}$
we get a representation of $GL_{n}$ by composition $i\circ\rho:\Gamma\to SL_{n}\overset{i}{\hookrightarrow}GL_{n}$.
Since $SL_{n}$ is the derived group of $GL_{n}$, we have 
\[
\frac{Z_{i\circ\rho}}{ZGL_{n}}=\frac{Z_{i\circ\rho}\cap SL_{n}}{ZGL_{n}\cap SL_{n}}\text{, and }Z_{\rho}=Z_{i\circ\rho}\cap SL_{n}.
\]
Hence, if $\rho$ is an irreducible representation of $SL_{n}$ then
$i\circ\rho$ is an irreducible representation of $GL_{n}$ and hence,
by the previous argument, $i\circ\rho$ is a good $GL_{n}$-representation.
Therefore by using the fact that $Z_{\rho}=Z_{i\circ\rho}\cap SL_{n}$
we get $Z_{\rho}=ZSL_{n}$ which says that $\rho$ is a good representation
into $SL_{n}$. \end{proof}
\begin{prop}
\label{prop:good}Let $G=G_{1}\times G_{2}$, be a product of two
reductive groups. We have the following isomorphism of smooth algebraic
varieties: 
\[
\mathcal{X}_{\Gamma}^{g}(G)=\mathcal{X}_{\Gamma}^{g}(G_{1})\times\mathcal{X}_{\Gamma}^{g}(G_{2}).
\]
\end{prop}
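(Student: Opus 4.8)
The plan is to reduce everything to two observations: that the space of representations into a product is canonically the product of representation spaces, and that the \emph{good} condition can be tested one factor at a time.

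First I would use the natural identification $\mathcal{R}_{\Gamma}(G_{1}\times G_{2})=\hom(\Gamma,G_{1}\times G_{2})\cong\hom(\Gamma,G_{1})\times\hom(\Gamma,G_{2})=\mathcal{R}_{\Gamma}(G_{1})\times\mathcal{R}_{\Gamma}(G_{2})$, which sends $\rho$ to the pair $(\rho_{1},\rho_{2})$ of its compositions with the projections $\pi_{i}:G\to G_{i}$. This is an isomorphism of affine varieties, and it is equivariant for the conjugation action of $G=G_{1}\times G_{2}$, the right-hand side carrying the product of the conjugation actions of $G_{1}$ and $G_{2}$; indeed $(g_{1},g_{2})\cdot\rho$ has components $(g_{1}\rho_{1}g_{1}^{-1},g_{2}\rho_{2}g_{2}^{-1})$, so the orbit of $\rho$ is the product $(G_{1}\cdot\rho_{1})\times(G_{2}\cdot\rho_{2})$.

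Next I would match up the good loci. An element of $G_{1}\times G_{2}$ commutes with $\rho(\Gamma)$ exactly when each component commutes with the corresponding $\rho_{i}(\Gamma)$, so the centralizer splits as $Z_{\rho}=Z_{\rho_{1}}\times Z_{\rho_{2}}$, while $ZG=ZG_{1}\times ZG_{2}$. Since $ZG_{i}\subseteq Z_{\rho_{i}}$ always holds, the equality $Z_{\rho}=ZG$ is equivalent to $Z_{\rho_{i}}=ZG_{i}$ for both $i$. For polystability I would use that the Zariski closure of a product is the product of the closures, so the orbit $(G_{1}\cdot\rho_{1})\times(G_{2}\cdot\rho_{2})$ is closed if and only if each factor $G_{i}\cdot\rho_{i}$ is. Together these show that $\rho$ is good if and only if $\rho_{1}$ and $\rho_{2}$ are both good, whence the isomorphism above restricts to a $G$-equivariant isomorphism $\mathcal{R}_{\Gamma}^{g}(G)\cong\mathcal{R}_{\Gamma}^{g}(G_{1})\times\mathcal{R}_{\Gamma}^{g}(G_{2})$.

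Finally I would pass to quotients. On the good locus $ZG_{i}$ acts trivially and $PG_{i}=G_{i}/ZG_{i}$ acts freely, so $\mathcal{X}_{\Gamma}^{g}(G_{i})=\mathcal{R}_{\Gamma}^{g}(G_{i})/G_{i}$ is a geometric (orbit-space) quotient, smooth by \cite{Sik}. The hard part will be to check that the geometric quotient of the product by the product group is the product of the geometric quotients. Set-theoretically this is immediate, since the $(PG_{1}\times PG_{2})$-orbits in the product are precisely products of $PG_{i}$-orbits, so the orbit space is $\mathcal{X}_{\Gamma}^{g}(G_{1})\times\mathcal{X}_{\Gamma}^{g}(G_{2})$; the content is that this bijection is an isomorphism of varieties. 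I would establish this either from the universal property of geometric quotients, or by observing that a product of principal $PG_{i}$-bundles is a principal $(PG_{1}\times PG_{2})$-bundle, so the two quotient maps agree. Since a product of smooth varieties is smooth, this gives the asserted isomorphism of smooth algebraic varieties.
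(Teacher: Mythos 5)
Your proof is correct, and it rests on the same core computation as the paper's --- the splitting of centralizers $Z_{\rho}=Z_{\rho_{1}}\times Z_{\rho_{2}}$ and of centers $ZG=ZG_{1}\times ZG_{2}$ --- but it is organized differently around the quotient step. The paper first asserts the product decomposition at the level of the \emph{full} character varieties, $\mathcal{X}_{\Gamma}(G)=\mathcal{X}_{\Gamma}(G_{1})\times\mathcal{X}_{\Gamma}(G_{2})$ (conjugation acts factor-wise and the affine GIT quotient of a product by a product group is the product of the GIT quotients, cf.\ \cite{FL1}), and then shows that this isomorphism restricts to the good loci, using only the stabilizer condition. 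You instead stay on the representation varieties, cut down to the good loci there, and only then take quotients; this forces you to justify that the quotient of the product equals the product of the quotients, which you do via the principal-bundle observation (valid: on the good locus $PG_{i}$ acts freely with closed orbits, so $\mathcal{R}_{\Gamma}^{g}(G_{i})\to\mathcal{X}_{\Gamma}^{g}(G_{i})$ is a principal $PG_{i}$-bundle by \cite{FL3,Sik}, and a product of principal bundles is a principal bundle for the product group). Your version makes explicit two points the paper leaves implicit: the polystability of the factors (a product orbit is closed iff both factor orbits are, since closures of products are products of closures), and the precise mechanism by which the quotient commutes with products, avoiding any appeal to GIT--product compatibility on the non-good locus. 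The paper's version is shorter because, once the product decomposition of the whole character variety is granted, the good-locus statement is literally the restriction of an existing isomorphism of varieties, with no further quotient-theoretic work.
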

\begin{proof}
We have $\mathcal{X}_{\Gamma}(G)=\mathcal{X}_{\Gamma}(G_{1})\times\mathcal{X}_{\Gamma}(G_{2})$
as algebraic varieties, since conjugation by $G=G_{1}\times G_{2}$
works independently on each factor of (cf. also \cite{FL1}): 
\[
\hom(\Gamma,G)=\hom(\Gamma,G_{1})\times\hom(\Gamma,G_{2}).
\]
Now suppose that a given representation $\rho\in\hom(\Gamma,G)$ has
a trivial stabilizer (the center of $G$). Then it is a product of
the centers of $G_{1}$ and of $G_{2}$, so $\rho$ is of the form
$\rho=(\rho_{1},\rho_{2})$ with good factors $\rho_{i}\in\hom(\Gamma,G_{i})$,
$i=1,2$. The converse is also clear, so the above isomorphism restricts
to the good loci. 
\end{proof}
Put together, the previous 2 statements show that, for $GL_{n}$ and
for $SL_{n}$-character varieties, the good locus coincides with both
the irreducible locus and with the stable locus, and these are multiplicative,
under products of reductive groups.

\subsection{$GL_{n}$- and $PGL_{n}$-character varieties of the free group}

When considering $\Gamma=F_{r}$, the free group of rank $r$, we
use the simplified notations $\mathcal{R}_{r}G:=\hom(F_{r},G)$ and
$\mathcal{X}_{r}G:=\hom(F_{r},G)\gitq G$, for the $G$-representation
and $G$-character varieties, respectively.

There is a natural action of $\mathcal{R}_{r}\mathbb{C}^{*}:=\hom(F_{r},\mathbb{C}^{*})\cong(\mathbb{C}^{*})^{r}$
on $\mathcal{R}_{r}GL_{n}$, which, in terms of the fixed generators
$\gamma_{1},\cdots,\gamma_{r}$ of $F_{r}$ is given by scalar multiplication
of representations: 
\begin{equation}
\sigma\cdot\rho(\gamma_{i}):=\sigma(\gamma_{i})\rho(\gamma_{i}),\quad\sigma\in\mathcal{R}_{r}\mathbb{C}^{*},\ \rho\in\mathcal{R}_{r}GL_{n}.\label{eq:C-GLnaction}
\end{equation}
The quotient of this action is the representation variety for $PGL_{n}$,
$\mathcal{R}_{r}PGL_{n}$. Since the central $\mathbb{C}^{*}$ action
commutes with conjugation of representations, this sequence descends
to the character varieties 
\begin{equation}
(\mathbb{C}^{\ast})^{r}\rightarrow\mathcal{X}_{r}GL_{n}\rightarrow\mathcal{X}_{r}PGL_{n}.\label{eq:GLn-PGLn-fibration}
\end{equation}
Note that, because $\Gamma=F_{r}$, all $PGL_{n}$ representations
can be lifted to $GL_{n}$. The following generalization to other
groups $G$ and their adjoints $PG$ is immediate from Corollary \ref{cor:special-fibration}. 
\begin{prop}
\label{prop:PG-fibration-X} Let $G$ be a connected reductive group
with connected center $Z$. Then, the natural quotient map $\mathcal{X}_{r}G\to\mathcal{X}_{r}PG$
is a special fibration, with fiber $\mathcal{X}_{r}Z\cong Z^{r}$.
Hence, 
\[
E(\mathcal{X}_{r}G)=(uv-1)^{rl}\,E(\mathcal{X}_{r}PG),
\]
with $l=\dim Z$. In particular, for $G=GL_{n}$ we get $E(\mathcal{X}_{r}GL_{n})=(uv-1)^{r}\,E(\mathcal{X}_{r}PGL_{n})$. 
\end{prop}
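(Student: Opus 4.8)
The plan is to generalize the central scaling action used for $GL_n$ in \eqref{eq:GLn-PGLn-fibration} to an arbitrary $G$ with connected center, and then to recognize the resulting quotient map as a special fibration so that Corollary \ref{cor:special-fibration} applies. First I would let $\mathcal{R}_r Z = \hom(F_r,Z) \cong Z^r$ act on $\mathcal{R}_r G$ by scalar multiplication on each generator, $\sigma \cdot \rho(\gamma_i) = \sigma(\gamma_i)\,\rho(\gamma_i)$; because $Z$ is central this is well defined and commutes with conjugation by $G$, so it descends to an action on $\mathcal{X}_r G$, and quotienting in stages identifies the quotient with $\mathcal{X}_r PG$ (using, as in the $GL_n$ case, that over the free group every $PG$-representation lifts to $G$, since $G \to PG$ is surjective and $F_r$ is free). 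This yields the fibration
\[
\mathcal{X}_r Z \to \mathcal{X}_r G \to \mathcal{X}_r PG,
\]
with fiber $\mathcal{X}_r Z$; and since $Z$ is abelian, conjugation on $\hom(F_r,Z)$ is trivial, giving $\mathcal{X}_r Z \cong Z^r$.

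Next I would pin down the group-theoretic structure of the fiber. As in the proof of Proposition \ref{prop:PG-fibration}, the center of a complex reductive group is of multiplicative type with trivial unipotent part, and being connected by hypothesis it is a torus, $Z \cong (\mathbb{C}^*)^l$ with $l = \dim Z$. Hence the fiber $Z^r \cong (\mathbb{C}^*)^{rl}$ is again a torus, which is a special group by \cite{Se}. Granting that $\mathcal{X}_r G \to \mathcal{X}_r PG$ is a special fibration with this fiber, Corollary \ref{cor:special-fibration} (with $W$ trivial) gives $E(\mathcal{X}_r G) = E(Z^r)\,E(\mathcal{X}_r PG)$, and the multiplicativity of $E$ together with $E(\mathbb{C}^*) = uv-1$ yields $E(Z^r) = (uv-1)^{rl}$; specializing to $G = GL_n$, where $l = 1$, recovers the stated factor $(uv-1)^r$.

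The main obstacle is the one step glossed over here: verifying that the descended map $\pi \colon \mathcal{X}_r G \to \mathcal{X}_r PG$ really is a special fibration, i.e. a Zariski-locally trivial $Z^r$-bundle. Upstairs this is clean, since $\mathcal{R}_r G \to \mathcal{R}_r PG$ is literally the $r$-th Cartesian power of the principal $Z$-bundle $Z \to G \to PG$ of \eqref{eq:ZG-G-PG}, hence a Zariski-locally trivial principal $Z^r$-bundle because the torus $Z^r$ is special (and for $GL_n$ one even has the explicit local sections of Example \ref{exa:PGL-section}). The delicate point is pushing this structure through the GIT quotient by $G$: the induced $Z^r$-action on $\mathcal{X}_r G$ need not be free --- for $G = GL_2$ and $r=1$ the class of $\mathrm{diag}(1,-1)$ is fixed by $-1 \in \mathbb{C}^*$ --- so $\pi$ is not a principal bundle in the naive sense, and one must argue local triviality of $\pi$ directly rather than inferring it from freeness. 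I expect the cleanest way to close this gap is to establish Zariski-local-triviality of $\pi$ with fiber $Z^r$ by producing local sections of $\pi$ over a suitable open cover of $\mathcal{X}_r PG$, after which Theorem \ref{thm:W-fibration}(i) applies with $F = Z^r$ of Hodge-Tate type; alternatively, one can restrict to the locally closed strata on which the $Z^r$-action is free, run the argument there, and reassemble via the additivity of the $E$-polynomial.
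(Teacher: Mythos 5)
Your construction follows the paper's route exactly: the paper also takes the scaling action of $\mathcal{R}_{r}Z\cong Z^{r}$ on $\mathcal{X}_{r}G$, identifies the quotient with $\mathcal{X}_{r}PG$ via liftability of representations of the free group, notes that $Z$ (hence $Z^{r}$) is a special torus, and then simply declares the Proposition ``immediate from Corollary \ref{cor:special-fibration}'' --- it offers no justification at all of the special-fibration claim. So the obstacle you isolate is not a defect of your reconstruction relative to the paper: it is a genuine gap in the paper's own argument, and your counterexample to freeness is correct, since $-\mathrm{diag}(1,-1)$ is conjugate to $\mathrm{diag}(1,-1)$ by the $2\times2$ permutation matrix $P$. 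Worse, the failure is not confined to reducible classes: for $r=n=2$ the pair $\rho=(\mathrm{diag}(1,-1),P)$ is irreducible and satisfies $(-1,1)\cdot\rho=P\rho P^{-1}$, so even $\mathcal{X}_{r}^{irr}GL_{n}\to\mathcal{X}_{r}^{irr}PGL_{n}$ is not a principal $(\mathbb{C}^{*})^{r}$-bundle.

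That said, your write-up leaves the Proposition unproved, and the first of your two proposed repairs cannot succeed, because Zariski (even analytic) local triviality genuinely fails, so there are no local sections to be found. Take $r=1$, $G=GL_{2}$: via (trace, determinant), $\mathcal{X}_{1}GL_{2}\cong\mathbb{C}\times\mathbb{C}^{*}$, $\mathcal{X}_{1}PGL_{2}\cong\mathbb{C}$, and $\pi(t,d)=t^{2}/d$. Every fiber is isomorphic to $\mathbb{C}^{*}$, but the fiber $\{t=0\}$ includes into the total space generating $H_{1}(\mathbb{C}\times\mathbb{C}^{*})\cong\mathbb{Z}$, while each fiber $\{t^{2}=cd\}$ with $c\neq0$ includes with image $2\mathbb{Z}$ (the loop $t=e^{i\theta}$ winds twice in $d$); were $\pi$ trivial over a neighbourhood of $0$, both inclusions would factor through $\pi^{-1}(U)\cong U\times\mathbb{C}^{*}$ with the same image on $H_{1}$. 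Thus $\pi$ is not a fiber bundle at all: the assertion ``is a special fibration'' is false as stated, and only the $E$-polynomial identity can be saved. Your second repair is the right direction but needs refinement, since by the example above the free locus does not even contain the irreducible stratum; one must stratify by isotropy type $\Delta\subset Z^{r}$ (finite on each stratum), note that the torus $Z^{r}/\Delta$ still has dimension $rl$ and $E$-polynomial $(uv-1)^{rl}$ and acts freely there, and reassemble by additivity. Alternatively --- and closest to the mechanisms the paper itself uses later (Theorem \ref{thm:SL-2strata}, Lemma \ref{lem:interpolating-homotopy}) --- use freeness of $F_{r}$ to write $\mathcal{X}_{r}PGL_{n}\cong\mathcal{X}_{r}SL_{n}/\mathbb{Z}_{n}^{r}$ and $\mathcal{X}_{r}GL_{n}\cong\bigl(\mathcal{X}_{r}SL_{n}\times(\mathbb{C}^{*})^{r}\bigr)/\mathbb{Z}_{n}^{r}$, where $\epsilon\in\mathbb{Z}_{n}^{r}$ acts by $([\rho],\lambda)\mapsto([\epsilon\rho],\epsilon^{-1}\lambda)$: since $\mathbb{Z}_{n}^{r}$ acts on the torus factor by translations in a connected group, it acts trivially on its cohomology, so the equivariant K\"unneth formula and passage to invariants give $E(\mathcal{X}_{r}GL_{n})=(uv-1)^{r}E(\mathcal{X}_{r}PGL_{n})$; the same argument applied to the isogeny $[G,G]\times Z\to G$ proves the general case with the factor $(uv-1)^{rl}$.
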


\section{The strictly polystable case}

\label{section:stratifications}In this section and the next one,
we prove Theorem \ref{thm:main}: the equality of the Serre polynomials
of $\mathcal{X}_{r}SL_{n}$ and $\mathcal{X}_{r}PGL_{n}$. If we tried
to imitate the fibration methods of Section 3, we would consider the
quotient morphism $SL_{n}\to PGL_{n}$, with kernel $\mathbb{Z}_{n}$,
the center of $SL_{n}$, to obtain a fibration of character varieties:
\[
\mathbb{Z}_{n}^{r}\to\mathcal{X}_{r}SL_{n}\to\mathcal{X}_{r}PGL_{n}.
\]
However, since the fiber is not connected, we cannot apply Corollary
\ref{cor:special-fibration}; instead we proceed by stratifying this
fibration by polystable type, in analogy to the stratification used
in \cite{FNZ} (recalled below) and examine the $\mathbb{Z}_{n}^{r}$
action on the cohomology of each individual stratum. From now on,
our $E$-polynomials will only depend on a single variable; to emphasize
this property, we adopt the substitution $x\equiv uv$ and use the
notation: 
\begin{equation}
e(X):=E(X;\,\sqrt{x},\sqrt{x})\in\mathbb{Z}[x].\label{eq:small-e}
\end{equation}

\subsection{Stratifications by polystable type}

Any character variety admits a stratification by the dimension of
the stabilizer of a given representation. When dealing with the general
linear group $GL_{n}$ as well as the related groups $SL_{n}$ and
$PGL_{n}$, there is a more refined stratification which gives a lot
more information on the corresponding character varieties $\mathcal{X}_{\Gamma}G$.
In this subsection, we recall this stratification, following \cite[Section 4.1]{FNZ},
and describe its analogous versions for $SL_{n}$ and $PGL_{n}$.

A \emph{partition} of $n\in\mathbb{N}$ is denoted by $[k]=[1^{k_{1}}\cdots j^{k_{j}}\cdots n^{k_{n}}]$
where the exponent $k_{j}$ means that $[k]$ has $k_{j}\geq0$ parts
of size $j\in\{1,\cdots,n\}$, with $n=\sum_{j=1}^{n}j\,k_{j}$. The
\emph{length} of the partition is given by the sum of the exponents
$|[k]|:=\sum k_{j}$ and call $\mathcal{P}_{n}$ the set of all partitions
of $n\in\mathbb{N}$. As an example, $[1^{2}\;2\;4]\in\mathcal{P}_{8}$
is the partition $8=1\cdot2+2+4$, with length equal to $4$. 
\begin{defn}
Let $n\in\mathbb{N}$, $[k]\in\mathcal{P}_{n}$ and $\Gamma$ be a
finitely presented group. We say that $\rho\in\mathcal{R}_{\Gamma}GL_{n}=\hom(\Gamma,GL_{n})$
is \emph{$[k]$-polystable} if $\rho$ is conjugated to $\bigoplus_{j=1}^{n}\rho_{j}$
where each $\rho_{j}$ is, in turn, a direct sum of $k_{j}>0$ \emph{irreducible}
representations of $\mathcal{R}_{\Gamma}(GL_{j})$, for $1\leq j\leq n$
(by convention, if some $k_{j}=0$, then $\rho_{j}$ is not present
in the direct sum). We denote the $[k]$-polystable representations
by $\mathcal{R}_{\Gamma}^{[k]}GL_{n}$ and $\mathcal{X}_{\Gamma}^{[k]}GL_{n}\subset\mathcal{X}_{\Gamma}GL_{n}$
refers to the $[k]$-polystable locus of the character variety.\end{defn}
\begin{rem}
\label{rem:ab-irr-strata} The abelian stratum, i.e, the stratum of
representations factoring as $\Gamma\to\Gamma/[\Gamma,\Gamma]\to G$,
corresponds to the partition $[1^{n}]$ of maximal length $n$ (see
\cite{FNZ}); on the other hand, irreducible representations correspond
to the partition $[n]$ of minimal length $1$. By Lemma \ref{lem:Schur},
this irreducible stratum is also the smooth (and the stable) locus
of the representation varieties $\mathcal{R}_{\Gamma}^{irr}GL_{n}:=\mathcal{R}_{\Gamma}^{[n]}GL_{n}$. 
\end{rem}
The following summarizes the situation and is proved in \cite[Proposition 4.3]{FNZ}. 
\begin{prop}
\label{prop:locally-closed-GLn}Fix $n\in\mathbb{N}$. The character
variety $\mathcal{X}_{\Gamma}GL_{n}$ can be written as a disjoint
union of of locally closed quasi-projective varieties, labelled by
partitions $[k]\in\mathcal{P}_{n}$ 
\[
\mathcal{X}_{\Gamma}GL_{n}=\bigsqcup_{[k]\in\mathcal{P}_{n}}\mathcal{X}_{\Gamma}^{[k]}GL_{n},
\]
where $\mathcal{X}_{\Gamma}^{[k]}GL_{n}$ consists of equivalence
classes of $[k]$-polystable representations. Moreover, $\mathcal{X}_{\Gamma}^{[n]}GL_{n}$
is precisely the open locus $\mathcal{X}_{\Gamma}^{irr}GL_{n}$ of
irreducible classes of representations. 
\end{prop}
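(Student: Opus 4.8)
The plan is to first establish the decomposition set-theoretically using complete reducibility, and then to upgrade it to a decomposition into locally closed quasi-projective subvarieties by identifying each stratum with a product of symmetric powers of irreducible character varieties of smaller rank. I would begin by recalling that each point of $\mathcal{X}_{\Gamma}GL_{n}$ is represented by a unique (up to conjugation) polystable $\rho\in\mathcal{R}_{\Gamma}^{ps}GL_{n}$, and that polystability for $GL_{n}$ is equivalent to complete reducibility: $\rho$ is conjugate to a direct sum $\bigoplus_{a}\tau_{a}$ of irreducibles $\tau_{a}\in\mathcal{R}_{\Gamma}^{irr}GL_{d_{a}}$, with $\sum_{a}d_{a}=n$. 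By the Krull--Schmidt / Jordan--H\"older theorem for semisimple modules over the group algebra $\mathbb{C}[\Gamma]$, this decomposition is unique up to isomorphism and reordering of summands; in particular the multiset of dimensions $\{d_{a}\}$ is a well-defined invariant of the class $[\rho]$. Recording, for each $j\in\{1,\dots,n\}$, the number $k_{j}$ of summands of dimension $j$ produces a well-defined partition $[k]=[1^{k_{1}}\cdots n^{k_{n}}]\in\mathcal{P}_{n}$, and $[\rho]\in\mathcal{X}_{\Gamma}^{[k]}GL_{n}$ precisely for this $[k]$. This assignment shows at once that the subsets $\mathcal{X}_{\Gamma}^{[k]}GL_{n}$ are pairwise disjoint and cover $\mathcal{X}_{\Gamma}GL_{n}$, giving the claimed set-theoretic disjoint union.

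Next I would realize each stratum as a quotient. Choosing block-diagonal embeddings $\prod_{a}GL_{d_{a}}\hookrightarrow GL_{n}$, the direct-sum operation defines an algebraic map from $\prod_{j}\big(\mathcal{R}_{\Gamma}^{irr}GL_{j}\big)^{k_{j}}$ into $\mathcal{R}_{\Gamma}GL_{n}$; composing with the GIT quotient and observing that the result is invariant under conjugation within each block and under permutation of the $k_{j}$ factors of equal dimension, it descends to a morphism
\[
\Phi_{[k]}\colon\ \prod_{j=1}^{n}\sym^{k_{j}}\big(\mathcal{X}_{\Gamma}^{irr}GL_{j}\big)\ \too\ \mathcal{X}_{\Gamma}GL_{n}
\]
with image exactly $\mathcal{X}_{\Gamma}^{[k]}GL_{n}$. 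The uniqueness from the first step shows that $\Phi_{[k]}$ is injective, hence a bijection onto the stratum. Since each $\mathcal{X}_{\Gamma}^{irr}GL_{j}$ is quasi-projective (it is the geometric quotient of the Zariski-open locus $\mathcal{R}_{\Gamma}^{irr}GL_{j}$), and symmetric powers and finite products preserve quasi-projectivity, the source is quasi-projective; establishing that $\Phi_{[k]}$ is an isomorphism onto its image then equips $\mathcal{X}_{\Gamma}^{[k]}GL_{n}$ with the structure of a quasi-projective variety.

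The hard part will be showing that this image is \emph{locally closed} in $\mathcal{X}_{\Gamma}GL_{n}$ and that $\Phi_{[k]}$ is an isomorphism onto it, rather than merely a bijective morphism; here the set-theoretic bookkeeping must be matched with the \'etale-local structure of the GIT quotient. I would handle this by induction on $n$ together with a Luna-type slice analysis. The irreducible stratum $[n]$ is the image of the Zariski-open, conjugation-saturated locus $\mathcal{R}_{\Gamma}^{irr}GL_{n}$, hence is open in $\mathcal{X}_{\Gamma}GL_{n}$, which also proves the ``moreover'' claim via Remark~\ref{rem:ab-irr-strata}; its complement is the closed union of the remaining strata, each of which is assembled from irreducible constituents of rank strictly less than $n$ and is therefore locally closed by the inductive hypothesis applied to the smaller factors, the slice theorem controlling the normal directions along which summands degenerate into lower strata. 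Assembling these pieces yields the disjoint union by locally closed quasi-projective subvarieties asserted in the proposition.
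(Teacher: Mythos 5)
The paper does not actually prove this proposition: it is imported verbatim from \cite{FNZ} (their Proposition 4.3), so your proposal has to stand entirely on its own. Its first two steps are sound and match the picture in \cite{FNZ}: for $GL_{n}$, polystable is equivalent to completely reducible, Jordan--H\"older makes the multiset of dimensions of the irreducible summands a well-defined invariant of the class, so the strata $\mathcal{X}_{\Gamma}^{[k]}GL_{n}$ are pairwise disjoint and cover $\mathcal{X}_{\Gamma}GL_{n}$; the openness of the irreducible stratum is also fine, since $\mathcal{R}_{\Gamma}^{irr}GL_{n}$ is Zariski open and saturated (orbits of irreducibles are closed, and the fiber of the quotient map through an irreducible representation is exactly its orbit); and the description of the strata via $\Phi_{[k]}$ as symmetric products of lower-rank irreducible character varieties is the content of \cite[Proposition 4.5]{FNZ}.

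The genuine gap is precisely at the step you label ``the hard part'', and the argument you sketch there does not close it. Your induction on $n$ yields local closedness of strata inside $\mathcal{X}_{\Gamma}GL_{j}$ for $j<n$, but what is required is local closedness of $\mathcal{X}_{\Gamma}^{[k]}GL_{n}$ inside the \emph{ambient} variety $\mathcal{X}_{\Gamma}GL_{n}$; these live in different spaces, and the inductive hypothesis says nothing about how the image of $\Phi_{[k]}$ sits in $\mathcal{X}_{\Gamma}GL_{n}$. The phrase ``Luna-type slice analysis controlling the normal directions'' gestures at the right phenomenon but is not an argument. What must actually be proved is the closure-order statement: the closure of $\mathcal{X}_{\Gamma}^{[k]}GL_{n}$ meets only strata $\mathcal{X}_{\Gamma}^{[l]}GL_{n}$ with $[l]$ a subdivision (refinement) of $[k]$. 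A concrete way to get this: for each $[k]$, let $Y_{[k]}$ be the set of classes of representations admitting an invariant flag whose subquotients have dimensions given by $[k]$. In $\mathcal{R}_{\Gamma}GL_{n}$ this locus is closed, being the image of the incidence variety inside $\mathcal{R}_{\Gamma}GL_{n}\times\mathrm{Fl}$ under the proper projection (the flag variety $\mathrm{Fl}$ is projective); it is conjugation-invariant and saturated, and an affine GIT quotient maps closed invariant sets to closed sets, so $Y_{[k]}\subset\mathcal{X}_{\Gamma}GL_{n}$ is closed. Then $\mathcal{X}_{\Gamma}^{[k]}GL_{n}=Y_{[k]}\setminus\bigcup_{[l]}Y_{[l]}$, the union over strict subdivisions $[l]$ of $[k]$, exhibits the stratum as open in a closed set, hence locally closed; quasi-projectivity is then automatic, since a locally closed subvariety of an affine variety is quasi-projective. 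In particular you do not need $\Phi_{[k]}$ to be an isomorphism onto its image for the proposition -- which is fortunate, because that step is also incomplete as written: a bijective morphism of complex varieties need not be an isomorphism (one would need, e.g., normality of the target plus Zariski's main theorem), so ``establishing that $\Phi_{[k]}$ is an isomorphism onto its image'' cannot be deduced from injectivity alone.
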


\subsection{The free group case}

From now on, we restrict ourselves to the case $\Gamma=F_{r}$, the
free group of rank $r\in\mathbb{N}$, and use the notations 
\[
\mathcal{X}_{r}GL_{n},\quad\quad\mathcal{X}_{r}SL_{n},\quad\quad\mathcal{X}_{r}PGL_{n}
\]
for the corresponding character varieties. We will now define the
$[k]$-polystable loci $\mathcal{X}_{r}^{[k]}SL_{n}\subset\mathcal{X}_{r}SL_{n}$
and $\mathcal{X}_{r}^{[k]}PGL_{n}\subset\mathcal{X}_{r}PGL_{n}$,
and the corresponding stratifications, in analogy to Proposition \ref{prop:locally-closed-GLn}.

Recall the action in (\ref{eq:C-GLnaction}) and (\ref{eq:GLn-PGLn-fibration})
which clearly preserves the polystable type stratification of $GL_{n}$,
so we define 
\[
\mathcal{X}_{r}^{[k]}PGL_{n}:=\mathcal{X}_{r}^{[k]}GL_{n}/\mathcal{R}_{r}\mathbb{C}^{*}=\mathcal{X}_{r}^{[k]}GL_{n}/(\mathbb{C}^{*})^{r}.
\]
The next result is proved in the same way as Proposition \ref{prop:PG-fibration-X}.
We observe that the $E$-polynomials of all strata $\mathcal{X}_{r}^{[k]}GL_{n}$
are $1$-variable polynomials (see \cite{FNZ}). Hence, we use the
notation in \eqref{eq:small-e} for $E$-polynomials in the variable
$x=uv$. 
\begin{prop}
\label{prop:RPG-fibration}Let $F_{r}$ be a free group of rank $r$.
For every $[k]\in\mathcal{P}_{n}$, the fibration 
\[
\mathcal{R}_{r}\mathbb{C}^{*}\to\mathcal{X}_{r}^{[k]}GL_{n}\to\mathcal{X}_{r}^{[k]}PGL_{n}
\]
is special. In particular, $e(\mathcal{X}_{r}^{[k]}GL_{n})=(x-1)^{r}\,e(\mathcal{X}_{r}^{[k]}PGL_{n})$. 
\end{prop}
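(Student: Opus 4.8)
The plan is to mimic the proof of Proposition \ref{prop:PG-fibration-X} on the fixed stratum. Write $T:=\mathcal{R}_r\mathbb{C}^*\cong(\mathbb{C}^*)^r$ for the group acting by scalar multiplication \eqref{eq:C-GLnaction}, and let $\pi\colon\mathcal{X}_r GL_n\to\mathcal{X}_r PGL_n$ be the quotient map of \eqref{eq:GLn-PGLn-fibration}. First I would record that scalar multiplication preserves the polystable type: multiplying each $\rho(\gamma_i)$ by a scalar changes neither the irreducible constituents nor their block-decomposition pattern. Hence the $[k]$-stratum is $T$-saturated and, by the very definition $\mathcal{X}_r^{[k]}PGL_n=\mathcal{X}_r^{[k]}GL_n/T$, one has $\mathcal{X}_r^{[k]}GL_n=\pi^{-1}(\mathcal{X}_r^{[k]}PGL_n)$. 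So the displayed map is just the restriction of $\pi$ to the preimage of the locally closed sublocus $\mathcal{X}_r^{[k]}PGL_n\subset\mathcal{X}_r PGL_n$.

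The quick route is then to invoke Proposition \ref{prop:PG-fibration-X} (with $G=GL_n$, $Z=\mathbb{C}^*$, $l=1$): $\pi$ is a special fibration with fibre $T\cong(\mathbb{C}^*)^r$, and restricting a principal $T$-bundle to a locally closed subset of the base yields a principal $T$-bundle over that subset, Zariski-local triviality being inherited. Since a torus is special, the restriction is a special fibration, and Corollary \ref{cor:special-fibration} with $W$ trivial gives $E(\mathcal{X}_r^{[k]}GL_n)=E(T)\cdot E(\mathcal{X}_r^{[k]}PGL_n)=(uv-1)^r\,E(\mathcal{X}_r^{[k]}PGL_n)$, which in the one-variable notation \eqref{eq:small-e} is the asserted $e(\mathcal{X}_r^{[k]}GL_n)=(x-1)^r\,e(\mathcal{X}_r^{[k]}PGL_n)$.

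The delicate point behind the previous paragraph — and where I expect the real work to sit — is the claim that $\pi$ is genuinely a principal $T$-bundle after passing to the \emph{character} varieties. On the representation varieties the scalar action is manifestly free, and $\mathcal{R}_r GL_n=GL_n^{\,r}\to PGL_n^{\,r}=\mathcal{R}_r PGL_n$ is an honest Zariski-locally trivial principal $T$-bundle, being the $r$-fold product of the special fibration $\mathbb{C}^*\to GL_n\to PGL_n$ of Example \ref{exa:PGL-section}. Passing to the conjugation quotient, however, the scalar action on $\mathcal{X}_r GL_n$ may acquire nontrivial \emph{finite} stabilizers: a class $[\rho]$ is fixed by $\sigma\in T$ precisely when $\sigma\cdot\rho$ is conjugate to $\rho$, and comparing determinants forces $\lambda_i^{\,n}=1$ for each $i$, so every stabilizer lies in the $n$-torsion subgroup $(\mathbb{Z}_n)^r\subset T$.

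To handle this rigorously I would stratify the base $\mathcal{X}_r^{[k]}PGL_n$ by the (finite) stabilizer type $F\subseteq(\mathbb{Z}_n)^r$ into locally closed pieces $B_F$; over each $B_F$ the group $T/F$ acts freely, so $\pi^{-1}(B_F)\to B_F$ is a principal $(T/F)$-bundle. The observation that rescues the product formula is that $T/F$, being the quotient of a rank-$r$ torus by a finite subgroup, is again a rank-$r$ torus, hence special, with $e(T/F)=(x-1)^r$ independently of $F$. Corollary \ref{cor:special-fibration} then yields $e(\pi^{-1}(B_F))=(x-1)^r\,e(B_F)$ on each stratum, and summing over the finitely many $B_F$ by additivity of $e$ recovers the stated relation. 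In short, finiteness of the stabilizers together with the fact that a torus modulo a finite subgroup has the same $E$-polynomial as the torus itself is exactly what makes the naive multiplicative formula correct, even where the scalar action fails to be free.
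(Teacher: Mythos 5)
Your proof is correct, and it is actually more careful than the paper's own argument, which consists of the single remark that the result ``is proved in the same way as Proposition \ref{prop:PG-fibration-X}'' --- i.e.\ precisely your ``quick route'': declare the quotient map a principal $\mathcal{R}_{r}\mathbb{C}^{*}$-bundle, invoke speciality of tori, and apply Corollary \ref{cor:special-fibration}. The delicate point you isolate is a genuine gap in that route, not a pedantic one: the scalar action of $T=(\mathbb{C}^{*})^{r}$ on the \emph{character} variety (unlike on the representation variety) is not free, so the quotient map is not a principal $T$-bundle, and in fact it need not be locally trivial at all. For $n=2$, $r=1$ the whole character variety is the stratum $[1^{2}]$, namely $\mathcal{X}_{1}^{[1^{2}]}GL_{2}\cong\mathbb{C}\times\mathbb{C}^{*}$ with coordinates $(t,d)=(\mathrm{tr},\det)$; the map to $\mathcal{X}_{1}^{[1^{2}]}PGL_{2}\cong\mathbb{C}$ is $(t,d)\mapsto t^{2}/d$; the class of $\mathrm{diag}(1,-1)$ has stabilizer $\{\pm1\}$; and any section $u\mapsto(t(u),d(u))$ near $u=0$ would have to satisfy $t(u)^{2}=u\,d(u)$ with $d(u)$ a unit, which is impossible by comparing orders of vanishing at $u=0$. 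So this map is not locally trivial even analytically, and the ``is special'' clause of the proposition is literally false as stated (a pair of Pauli matrices for $n=r=2$, fixed up to conjugation by $\sigma=(-1,1)$, shows the same failure on the irreducible stratum). What survives --- and what the rest of the paper actually uses, in Section 6 --- is the $E$-polynomial identity, and your stabilizer-type stratification proves exactly that: the subsets $B_{F}$, $F\subseteq(\mathbb{Z}_{n})^{r}$, are finitely many and locally closed (the locus where the stabilizer contains $F$ is the image under $\pi$ of the closed $T$-invariant fixed locus of $F$, and quotient maps send closed invariant sets to closed sets); over each $B_{F}$ the torus $T/F$ acts freely with orbits equal to fibres; $T/F$ is again an $r$-dimensional torus, so $e(\pi^{-1}(B_{F}))=(x-1)^{r}e(B_{F})$; and additivity of $e$ sums these to the claimed identity. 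In short, where the paper's route buys brevity at the cost of a false intermediate claim, yours buys a correct proof of the statement that is actually needed downstream.

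One step in your argument should be justified rather than asserted: freeness of the $T/F$-action over $B_{F}$ does not by itself make $\pi^{-1}(B_{F})\to B_{F}$ a Zariski-locally trivial principal bundle. Here it does follow, because each $T$-orbit is a fibre of the morphism $\pi$ and hence closed in the affine variety $\mathcal{X}_{r}GL_{n}$, so Luna's \'etale slice theorem gives \'etale-local triviality, and speciality of the torus $T/F$ (Hilbert 90) upgrades this to Zariski-local triviality; with that reference added your proof is complete. Note also that the same repair is needed for Proposition \ref{prop:PG-fibration-X} itself and for the special-fibration claims inside the proof of Theorem \ref{thm:SL-2strata}, and your stratification argument applies to those verbatim.
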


\subsection{The action of the symmetric group on strictly polystable strata}

For a partition $[k]\in\mathcal{P}_{n}$, we define the $[k]$-stratum
of $\mathcal{X}_{r}SL_{n}$ by restriction of the corresponding one
for $GL_{n}$: 
\[
\mathcal{X}_{r}^{[k]}SL_{n}:=\{\rho\in\mathcal{X}_{r}^{[k]}GL_{n}\,|\,\det\rho=1\},
\]
where the determinant of a (conjugacy class of a) representation is
an element of $\mathcal{R}_{r}\mathbb{C}^{*}$. The action of $\mathcal{R}_{r}\mathbb{C}^{*}$
on $\mathcal{X}_{r}^{[k]}GL_{n}$ does not preserve $\mathcal{X}_{r}^{[k]}SL_{n}$
because of the determinant condition. On the other hand, from the
behaviour of the Zariski topology under closed inclusions and quotients,
the following is clear. 
\begin{prop}
\label{prop:locally-closed-SL+PGL}Fix $n\in\mathbb{N}$. The character
varieties $\mathcal{X}_{r}SL_{n}$ and $\mathcal{X}_{r}PGL_{n}$ can
be written as a disjoint unions of of locally closed quasi-projective
varieties, labelled by partitions $[k]\in\mathcal{P}_{n}$ 
\[
\mathcal{X}_{r}SL_{n}=\bigsqcup_{[k]\in\mathcal{P}_{n}}\mathcal{X}_{r}^{[k]}SL_{n},\quad\quad\mathcal{X}_{r}PGL_{n}=\bigsqcup_{[k]\in\mathcal{P}_{n}}\mathcal{X}_{r}^{[k]}PGL_{n}
\]
Moreover, $\mathcal{X}_{r}^{irr}SL_{n}=\mathcal{X}_{r}^{[n]}SL_{n}$
and $\mathcal{X}_{r}^{irr}PGL_{n}=\mathcal{X}_{r}^{[n]}PGL_{n}$ are
Zariski open. 
\end{prop}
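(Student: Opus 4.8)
The plan is to deduce both stratifications from the $GL_{n}$ statement of Proposition \ref{prop:locally-closed-GLn} by transporting it along the two natural maps relating the three character varieties: the determinant morphism for $SL_{n}$, and the quotient by the central torus $\mathcal{R}_{r}\mathbb{C}^{*}=(\mathbb{C}^{*})^{r}$ for $PGL_{n}$.

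For the $SL_{n}$ case, first I would note that the assignment $[\rho]\mapsto(\det\rho(\gamma_{1}),\dots,\det\rho(\gamma_{r}))$ is a well-defined morphism $d:\mathcal{X}_{r}GL_{n}\to(\mathbb{C}^{*})^{r}$, since the determinant of each generator is invariant under conjugation. Consequently $\mathcal{X}_{r}SL_{n}=d^{-1}(1,\dots,1)$ is a Zariski-closed subvariety of the affine variety $\mathcal{X}_{r}GL_{n}$, and by definition $\mathcal{X}_{r}^{[k]}SL_{n}=\mathcal{X}_{r}^{[k]}GL_{n}\cap\mathcal{X}_{r}SL_{n}$. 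Intersecting the disjoint decomposition of Proposition \ref{prop:locally-closed-GLn} with this closed subset yields the claimed disjoint union; each stratum is locally closed in $\mathcal{X}_{r}SL_{n}$ because the intersection of a locally closed set with a closed subvariety is locally closed therein, and it is quasi-projective as a locally closed subvariety of an affine variety. For the open stratum, $\mathcal{X}_{r}^{[n]}GL_{n}$ is Zariski open, so $\mathcal{X}_{r}^{[n]}SL_{n}$ is open in $\mathcal{X}_{r}SL_{n}$; its identification with $\mathcal{X}_{r}^{irr}SL_{n}$ follows from the observation in the proof of Lemma \ref{lem:Schur} that $\rho$ is $SL_{n}$-irreducible if and only if $i\circ\rho$ is $GL_{n}$-irreducible, so the irreducible $SL_{n}$-classes are exactly the determinant-one classes lying in the $[n]$-stratum of $GL_{n}$.

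For the $PGL_{n}$ case, I would use that the torus action (\ref{eq:C-GLnaction}) preserves the polystable type---scalar multiplication of each generator neither splits nor merges the irreducible blocks---so every stratum $\mathcal{X}_{r}^{[k]}GL_{n}$ is invariant (saturated) under $T:=(\mathbb{C}^{*})^{r}$, and by definition $\mathcal{X}_{r}^{[k]}PGL_{n}=\mathcal{X}_{r}^{[k]}GL_{n}/T$. By Proposition \ref{prop:RPG-fibration} the quotient map $q:\mathcal{X}_{r}GL_{n}\to\mathcal{X}_{r}PGL_{n}$ restricts to a special (hence Zariski-locally trivial, and in particular open) fibration on each stratum; since the strata upstairs are $T$-saturated and disjoint, their images are disjoint, cover $\mathcal{X}_{r}PGL_{n}$, and are locally closed, as the image of a $T$-invariant locally closed set under a Zariski-locally-trivial torus quotient is locally closed. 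Openness of $\mathcal{X}_{r}^{[n]}PGL_{n}$ is inherited from that of $\mathcal{X}_{r}^{[n]}GL_{n}$ via the open map $q$, and its identification with $\mathcal{X}_{r}^{irr}PGL_{n}$ again follows because the central action preserves irreducibility.

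The routine parts are the two topological bookkeeping arguments; the step that requires genuine care is the clean identification of $\mathcal{X}_{r}SL_{n}$ (resp.\ $\mathcal{X}_{r}PGL_{n}$) with the determinant-one locus (resp.\ the torus quotient) at the level of GIT quotients---namely that passing to $SL_{n}$- or $PGL_{n}$-conjugacy does not alter the polystable type, so that the $GL_{n}$-strata really do restrict and descend to the intended strata. This rests on the centralizer comparison of Lemma \ref{lem:Schur} and on the fact, guaranteed by the special-fibration structure of Proposition \ref{prop:RPG-fibration}, that the $GL_{n}$-quotient is Zariski-locally a product with the torus, which makes the descent of locally closed invariant sets transparent.
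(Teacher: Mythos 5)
Your proof is correct and follows essentially the same route as the paper, which disposes of this proposition in a single remark---``from the behaviour of the Zariski topology under closed inclusions and quotients, the following is clear''---so your two arguments (intersecting the $GL_{n}$ stratification of Proposition \ref{prop:locally-closed-GLn} with the closed determinant-one locus for $SL_{n}$, and descending it along the Zariski-locally-trivial torus quotient of Propositions \ref{prop:PG-fibration-X} and \ref{prop:RPG-fibration} for $PGL_{n}$) are exactly the details behind that remark. Nothing in your write-up departs from the paper's intended argument; it merely makes it explicit.
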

It turns out that the irreducible strata $\mathcal{X}_{r}^{irr}SL_{n}$
and $\mathcal{X}_{r}^{irr}PGL_{n}$ are the most difficult cases to
compare, so we start by studying the ones given by partitions with
at least 2 parts. 
\begin{thm}
\label{thm:SL-2strata}Fix $r$ and $n\in\mathbb{N}$. For a partition
$[k]\in\mathcal{P}_{n}$ with length $s>1$, we have: 
\[
e(\mathcal{X}_{r}^{[k]}GL_{n})=(x-1)^{r}e(\mathcal{X}_{r}^{[k]}SL_{n})
\]
\end{thm}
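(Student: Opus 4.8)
The plan is to recognize the stated identity as the multiplicativity of the Serre polynomial for the determinant fibration, and then to reduce the entire statement to the triviality of a finite scalar action on cohomology. First I would consider the determinant morphism
\[
\det\colon \mathcal{X}_{r}^{[k]}GL_{n}\longrightarrow\mathcal{R}_{r}\mathbb{C}^{\ast}\cong(\mathbb{C}^{\ast})^{r}=:T,
\]
whose fiber over the identity is, by definition, $\mathcal{X}_{r}^{[k]}SL_{n}$. For the scalar action $T=\mathcal{R}_{r}\mathbb{C}^{\ast}$ of \eqref{eq:C-GLnaction}, evaluating on generators gives $\det(\sigma\cdot\rho)=\sigma^{n}\det\rho$, so $\det$ is equivariant for the $n$-th power action on the base. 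Since $\sigma\mapsto\sigma^{n}$ is surjective on $T$, scalar multiplication identifies every fiber $\det^{-1}(t)$ with $\det^{-1}(1)=\mathcal{X}_{r}^{[k]}SL_{n}$; thus $\det$ is an algebraic fiber bundle with fiber the $SL_{n}$-stratum, and the asserted identity $e(\mathcal{X}_{r}^{[k]}GL_{n})=(x-1)^{r}\,e(\mathcal{X}_{r}^{[k]}SL_{n})$ is exactly the product formula $E(\mathcal{X}_{r}^{[k]}GL_{n})=E(T)\cdot E(\mathcal{X}_{r}^{[k]}SL_{n})$, using $E(T)=(uv-1)^{r}$.

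Rather than trivialize this bundle (it is \emph{not} Zariski-locally trivial in general, since a trivialization would extract $n$-th roots of the base coordinate), I would pass to the finite quotient. The composite $\mathcal{X}_{r}^{[k]}SL_{n}\hookrightarrow\mathcal{X}_{r}^{[k]}GL_{n}\to\mathcal{X}_{r}^{[k]}PGL_{n}$ realizes $\mathcal{X}_{r}^{[k]}PGL_{n}$ as the quotient $\mathcal{X}_{r}^{[k]}SL_{n}/\mu_{n}^{r}$, where $\mu_{n}^{r}\cong\hom(F_{r},Z)$ (with $Z\cong\mathbb{Z}_{n}$ the center of $SL_{n}$) acts by tensoring a representation with an $n$-torsion character of $F_{r}$. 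By the equivariant formalism of Section 2.1, taking $\mu_{n}^{r}$-invariants in $H_{c}^{\ast}$ recovers $e(\mathcal{X}_{r}^{[k]}PGL_{n})$ from the equivariant polynomial $e^{\mu_{n}^{r}}(\mathcal{X}_{r}^{[k]}SL_{n})$; combined with Proposition \ref{prop:RPG-fibration}, the stated identity is therefore \emph{equivalent} to the assertion that this $\mu_{n}^{r}$-action is trivial on $H_{c}^{\ast}(\mathcal{X}_{r}^{[k]}SL_{n})$. Equivalently, this is the triviality of the analytic monodromy of the determinant bundle on the fiber cohomology, which would let one conclude via Theorem \ref{thm:W-fibration}(ii).

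To establish this triviality when the length is $s>1$, I would use the block structure $\mathcal{X}_{r}^{[k]}GL_{n}\cong\prod_{j}\sym^{k_{j}}\,\mathcal{X}_{r}^{irr}GL_{j}$. On each irreducible factor $\mathcal{X}_{r}^{irr}GL_{j}$ the scalar action of $\mu_{j}^{r}$, indeed of all of $T$, is the restriction of the \emph{connected} central torus of $GL_{j}$ and so acts trivially on $H_{c}^{\ast}$; the central character $\sigma$ acts by tensoring all $s\ge2$ blocks simultaneously. The crucial freedom given by $s>1$ is that one may instead tensor the individual blocks by \emph{different} characters $\tau_{a}\in T$, and such a block-rescaling preserves the determinant-one condition precisely when $\prod_{a}\tau_{a}^{j_{a}}=1$. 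The diagonal character $\tau_{a}=\sigma$ lies in the group $K:=\{\,(\tau_{a})\in T^{s}:\prod_{a}\tau_{a}^{j_{a}}=1\,\}$ (because $\sigma^{\sum_{a}j_{a}}=\sigma^{n}=1$), and the identity component $K^{0}$ acts trivially on $H_{c}^{\ast}(\mathcal{X}_{r}^{[k]}SL_{n})$ as a connected group; so it remains to connect the diagonal central character to the identity inside $K$, which would trivialize the monodromy.

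The main obstacle is exactly this last connectivity step, where the arithmetic of the partition enters. When the block sizes are coprime, $\gcd_{a}j_{a}=1$, the homomorphism $\phi\colon T^{s}\to T$, $\phi(\tau)=\prod_{a}\tau_{a}^{j_{a}}$, admits a cocharacter splitting, $K$ is connected, the diagonal character lies in $K^{0}$, and the proof concludes (indeed $\det$ then splits and is even Zariski-locally trivial). When $\gcd_{a}j_{a}=d>1$ (for instance the stratum $[2^{2}]$ of $SL_{4}$), the group $K$ has component group $(\mathbb{Z}/d)^{r}$ and the diagonal character can land in a non-identity component, so rescaling alone no longer suffices; here I would exploit the remaining symmetry of blocks of equal size, namely the symmetric-group factors in $\sym^{k_{j}}$ together with the opposite rescalings $(\tau,\tau^{-1})$ of two equal blocks, to show the induced automorphism is nonetheless cohomologically trivial through an equivariant K\"unneth computation of the $\mu_{n}^{r}$-character on $H_{c}^{\ast}$ assembled from the trivial factors $H_{c}^{\ast}(\mathcal{X}_{r}^{irr}GL_{j})$. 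I expect this $\gcd>1$ case, and the bookkeeping of the symmetric-group invariants, to be the delicate part; by contrast the single-block case $s=1$ genuinely fails, since there is no second block to recombine with, which is precisely why the irreducible strata are deferred to Theorem \ref{thm:main-irred}.
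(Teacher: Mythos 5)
Your reduction is sound: by Proposition \ref{prop:RPG-fibration} and the identification $\mathcal{X}_{r}^{[k]}PGL_{n}=\mathcal{X}_{r}^{[k]}SL_{n}/\mu_{n}^{r}$, it suffices to prove that the central $\mu_{n}^{r}$-action on $H_{c}^{*}(\mathcal{X}_{r}^{[k]}SL_{n})$ is trivial (only that implication is needed, so ``equivalent'' overstates, but harmlessly). Your coprime case is also correct: when the parts have $\gcd$ equal to $1$, the B\'ezout splitting even gives $\mathcal{X}_{r}^{[k]}GL_{n}\cong\mathcal{R}_{r}\mathbb{C}^{*}\times\mathcal{X}_{r}^{[k]}SL_{n}$ directly. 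The genuine gap is the case $d:=\gcd$ of the parts $>1$ (e.g. $[2^{2}]\in\mathcal{P}_{4}$, $[2^{3}],[3^{2}]\in\mathcal{P}_{6}$, and all strata whose parts share a factor), which you flag but do not close, and your sketch for it does not work as described. Opposite rescalings of two equal blocks lie in $K^{0}$, so composing with them only moves the diagonal within its $K^{0}$-coset: for $[2^{2}]$ they convert $(\zeta,\zeta)$ into $(1,\zeta^{2})\notin K^{0}$ when $\zeta^{2}\neq1$; and the block swap adds nothing, since the square $(\zeta^{2},\zeta^{2})$ already lies in $K^{0}$, so one only re-learns that the diagonal acts as an involution, with nothing forcing that involution to be trivial. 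More decisively, no ``equivariant K\"unneth computation assembled from the trivial factors'' can exist: $S\mathsf{X}_{r}^{\mathbf{n}}$ is not a product (the determinant condition couples the blocks), and triviality of the action on each $H_{c}^{*}(\mathcal{X}_{r}^{irr}GL_{n_{a}})$ never passes automatically to the determinant locus --- if it did, the same one-line argument would dispose of the single-block locus $\mathcal{X}_{r}^{irr}SL_{n}$, which is exactly the hard case that occupies Section 5 of the paper.

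The paper's own proof of this theorem takes a different route precisely so as never to prove triviality of a finite action. It introduces the \emph{reduced} multiplication map $m(\sigma_{1},\cdots,\sigma_{s})=\sigma_{1}^{m_{1}}\cdots\sigma_{s}^{m_{s}}$ with $m_{a}=n_{a}/d$, whose kernel $H\subset J=(\mathcal{R}_{r}\mathbb{C}^{*})^{s}$ is a \emph{connected} torus for every length $s>1$, whatever $d$ is; it then treats $\mathsf{X}_{r}^{\mathbf{n}}$ and $S\mathsf{X}_{r}^{\mathbf{n}}$ as special torus fibrations (fibers $J$, resp.\ $H$) over the common base $\mathsf{X}_{r}^{\mathbf{n}}/J=S\mathsf{X}_{r}^{\mathbf{n}}/H$, and extracts the factor $(x-1)^{r}$ from the horizontal special fibration $H\to J\stackrel{m}{\to}\mathcal{R}_{r}\mathbb{C}^{*}$, i.e.\ $e^{S_{\mathbf{n}}}(J)=(x-1)^{r}e^{S_{\mathbf{n}}}(H)$, together with multiplicativity of $S_{\mathbf{n}}$-equivariant $E$-polynomials and passage to invariants. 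It is worth saying that the difficulty you isolated is real and resurfaces in the paper at exactly one point: the asserted identification $S\mathsf{X}_{r}^{\mathbf{n}}/H=\mathsf{X}_{r}^{\mathbf{n}}/J$, whose generic fibers differ by the residual group $K/H\cong(\mathbb{Z}/d)^{r}$ --- the same component group that blocks your homotopies. So your proposal correctly locates where the content of the theorem lies and proves it for partitions with coprime parts, but in the shared-factor case it is not a proof; completing it along your lines would require handling the residual $(\mathbb{Z}/d)^{r}$-action head-on, not by symmetric-group bookkeeping.
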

\begin{proof}
We start with a relation between $E$-polynomials of cartesian products
of irreducible character varieties. Let $s\in\mathbb{N}$ and $\mathbf{n}=(n_{1},\cdots,n_{s})$
be a sequence of $s$ positive integers with $n=\sum_{i=1}^{s}n_{i}$.
Denote: 
\[
\mathsf{X}_{r}^{\mathbf{n}}:=\times_{i=1}^{s}\mathcal{X}_{r}^{irr}GL_{n_{i}},
\]
and 
\[
S\mathsf{X}_{r}^{\mathbf{n}}:=\left\{ \rho=(\rho_{1},\cdots,\rho_{s})\in\mathsf{X}_{r}^{\mathbf{n}}\,|\,\prod_{i=1}^{s}\det\rho_{i}=1\right\} \subset\mathsf{X}_{r}^{\mathbf{n}}.
\]
It is clear that the previous constructions can be carried out in
this setting. For example, letting $J:=(\mathcal{R}_{r}\mathbb{C}^{*})^{s}$,
there is a natural action of $J$ on $\mathsf{X}_{r}^{\mathbf{n}}$:
\[
(\sigma_{1},\cdots,\sigma_{s})\cdot(\rho_{1},\cdots,\rho_{s})=(\sigma_{1}\rho_{1},\cdots,\sigma_{s}\rho_{s}),\quad\quad\sigma\in J=(\mathcal{R}_{r}\mathbb{C}^{*})^{s},\rho_{i}\in\mathcal{X}_{r}^{irr}GL_{n_{i}},
\]
since a scalar multiple of an irreducible representation is again
irreducible.

Define the multiplication map $m:J\to\mathcal{R}_{r}\mathbb{C}^{*}\cong(\mathbb{C}^{*})^{r}$
as follows: 
\[
m(\sigma_{1},\cdots,\sigma_{s})=\sigma_{1}^{m_{1}}\cdots\sigma_{s}^{m_{s}},
\]
where $m_{i}:=n_{i}/d$ with $d=gcd(n_{1},\cdots,n_{s})$ (a power
of a representation into $\mathbb{C}^{*}$ is just the power of every
generator). It turns out that 
\[
H:=\ker m=\{(\sigma_{1},\cdots,\sigma_{s})\in(\mathcal{R}_{r}\mathbb{C}^{*})^{s}\,|\,\sigma_{1}^{m_{1}}\cdots\sigma_{s}^{m_{s}}=1\},
\]
is abelian, connected (because $s>1$, and the $m_{i}$ are coprime)
and reductive. Then, it follows that $H$ is isomorphic to an algebraic
torus of the appropriate dimension, $H\cong(\mathbb{C}^{*})^{r(s-1)},$
since $J=(\mathcal{R}_{r}\mathbb{C}^{*})^{s}\cong(\mathbb{C}^{*})^{rs}$.
This allows us to obtain a diagram of algebraic fibrations (vertical
arrows): 
\[
\begin{array}{ccccc}
H & \subset & (\mathcal{R}_{r}\mathbb{C}^{*})^{s} & \stackrel{m}{\to} & \mathcal{R}_{r}\mathbb{C}^{*}\\
\downarrow &  & \downarrow\\
S\mathsf{X}_{r}^{\mathbf{n}} & \subset & \mathsf{X}_{r}^{\mathbf{n}}\\
\downarrow &  & \downarrow\\
S\mathsf{X}_{r}^{\mathbf{n}}/H & = & \mathsf{X}_{r}^{\mathbf{n}}/J,
\end{array}
\]
where both vertical fibrations (the left fibration being the restriction
to $S\mathsf{X}_{r}^{\mathbf{n}}$) are special.

Now we note that, using an action of a finite group, we can obtain
all strata of the stratification of $\mathcal{X}_{r}^{[k]}GL_{n}$.
To be concrete, denote by $Q_{n}$ the finite set: 
\[
Q_{n}:=\{\mathbf{n}=(n_{1},\cdots,n_{s})\in\mathbb{N}^{s}\ |\ \sum_{i=1}^{s}n_{i}=n\}.
\]
To every element of $Q_{n}$ we associate a unique partition of $n$
as follows: 
\begin{eqnarray*}
p:Q_{n} & \to & \mathcal{P}_{n}\\
\mathbf{n}=(n_{1},\cdots,n_{s}) & \mapsto & [k]:=[1^{k_{1}}\cdots n^{k_{n}}]
\end{eqnarray*}
where $k_{j}$ is the number of entries in the sequence $\mathbf{n}$
equal to $j$. For every $\mathbf{n}\in Q_{n}$, let $S_{\mathbf{n}}:=S_{[k]}\subset S_{n}$
be the subgroup of the symmetric group defined by this partition $[k]=p(\mathbf{n})$.
Moreover, we have isomorphisms of varieties: 
\[
\mathsf{X}_{r}^{\mathbf{n}}/S_{\mathbf{n}}\cong\mathcal{X}_{r}^{[k]}GL_{n},\quad\quad S\mathsf{X}_{r}^{\mathbf{n}}/S_{\mathbf{n}}\cong\mathcal{X}_{r}^{[k]}SL_{n},
\]
as can be easily checked. Now, for every $\mathbf{n}\in Q_{n}$, and
taking $(\mathsf{X}_{r}^{\mathbf{n}}/J)/S_{\mathbf{n}}$ at the bottom,
the above diagram becomes a special algebraic $S_{\mathbf{n}}$-fibration
(with trivial action on the bottom and the top row is still the same),
so we can apply Theorem \ref{thm:W-fibration} and Corollary \ref{cor:special-fibration}
to both vertical fibrations to obtain: 
\[
e^{S_{\mathbf{n}}}(\mathsf{X}_{r}^{\mathbf{n}})=e((\mathsf{X}_{r}^{\mathbf{n}}/J)/S_{\mathbf{n}})\,e^{S_{\mathbf{n}}}(J),\quad\quad e^{S_{\mathbf{n}}}(S\mathsf{X}_{r}^{\mathbf{n}})=e((\mathsf{X}_{r}^{\mathbf{n}}/J)/S_{\mathbf{n}})\,e^{S_{\mathbf{n}}}(H).
\]
and to the top horizontal fibration (which is also special) to get
\[
e^{S_{\mathbf{n}}}(J)=e^{S_{\mathbf{n}}}(H)\,(x-1)^{r},
\]
and putting the formulae together: 
\[
e^{S_{\mathbf{n}}}(\mathsf{X}_{r}^{\mathbf{n}})=e((\mathsf{X}_{r}^{\mathbf{n}}/J)/S_{\mathbf{n}})\,e^{S_{\mathbf{n}}}(H)\,(x-1)^{r}=e^{S_{\mathbf{n}}}(S\mathsf{X}_{r}^{\mathbf{n}})\,(x-1)^{r}
\]
Finally, we just need to take the invariant parts of the equivariant
polynomials: 
\[
e(\mathcal{X}_{r}^{[k]}GL_{n})=e(\mathsf{X}_{r}^{\mathbf{n}}/S_{\mathbf{n}})=e(S\mathsf{X}_{r}^{\mathbf{n}}/S_{\mathbf{n}})\,(x-1)^{r},
\]
whenever $[k]=p(\mathbf{n})\in\mathcal{P}_{n}$ (noting that $(x-1)^{r}=(x-1)^{r}T\in R(S_{\mathbf{n}})[x]$,
where $T$ is the trivial $S_{\mathbf{n}}$ one-dimensional representation,
as elements of the ring $R(S_{\mathbf{n}})[x]$). \end{proof}
\begin{rem}
Even though the above formulas prove these $E$-polynomials are only
functions of $x=uv$, one can not conclude that strata $\mathcal{X}_{r}^{[k]}SL_{n}$
are of Hodge-Tate type without further arguments (see Remark \ref{rem:Hodge-Tate-open}). 
\end{rem}

\subsection{Proof of the main Theorem}

We now outline the proof of the main theorem, which relies crucially
on the following. 
\begin{thm}
\label{thm:irred-Zn-action} The central action of $\mathbb{Z}_{n}^{r}$
on $\mathcal{X}_{r}^{irr}SL_{n}$ giving the quotient map 
\[
\mathcal{X}_{r}^{irr}SL_{n}\to\mathcal{X}_{r}^{irr}PGL_{n}
\]
induces an isomorphism of mixed Hodge structures $H^{*}(\mathcal{X}_{r}^{irr}SL_{n})\cong H^{*}(\mathcal{X}_{r}^{irr}PGL_{n}).$ 
\end{thm}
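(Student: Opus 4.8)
The plan is to reformulate the statement as the vanishing of a monodromy. Write $W:=\mathbb{Z}_n^r=\hom(F_r,Z)$, with $Z$ the center of $SL_n$, so that the quotient map is the passage to the $W$-quotient $\mathcal{X}_r^{irr}PGL_n=\mathcal{X}_r^{irr}SL_n/W$. Since $W$ is finite and the quotient is algebraic, $H^*(\mathcal{X}_r^{irr}PGL_n)\cong H^*(\mathcal{X}_r^{irr}SL_n)^{W}$ as mixed Hodge structures, and the asserted map is exactly this inclusion of invariants; hence the theorem is equivalent to the claim that $W$ acts \emph{trivially} on $H^*(\mathcal{X}_r^{irr}SL_n)$. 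First I would descend this to the representation variety. As the $PGL_n$-action on $\mathcal{R}_r^{irr}SL_n$ is free, $H^*(\mathcal{X}_r^{irr}SL_n)=H^*_{PGL_n}(\mathcal{R}_r^{irr}SL_n)$, computed by the Borel fibration with fiber $\mathcal{R}_r^{irr}SL_n$ over $BPGL_n$. Because $W$ commutes with conjugation and fixes $PGL_n$ pointwise, it acts trivially on $H^*(BPGL_n)$, so the associated Serre spectral sequence is $W$-equivariant with trivial action on the base factor; by semisimplicity of finite-group representations over $\mathbb{Q}$ it therefore suffices to prove that $W$ acts trivially on $H^*(\mathcal{R}_r^{irr}SL_n)$.

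The engine of the argument is the triviality of the $W$-action on the \emph{whole} representation variety $\mathcal{R}_rSL_n=SL_n^r$. Here the action of a generator $\zeta I\in Z$ on the $i$-th factor is the left translation $A_i\mapsto \zeta A_i$, which is homotopic to the identity through the left translations by a path from $I$ to $\zeta I$ inside the \emph{connected} group $SL_n$; this is a proper homotopy, hence trivial on $H^*$ and on $H^*_c$. It is precisely the absence of relations in $F_r$ (so that $\mathcal{R}_rSL_n$ is the homogeneous space $SL_n^r$) that makes this step elementary, and the analogous statement fails for surface groups.

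To transfer this to the open irreducible locus I would excise the closed, $W$-invariant reducible locus $\mathcal{R}_r^{red}SL_n$ and run the resulting $W$-equivariant long exact sequence relating $H^*_c(\mathcal{R}_r^{irr}SL_n)$, $H^*_c(SL_n^r)$ and $H^*_c(\mathcal{R}_r^{red}SL_n)$. The outer term $H^*_c(SL_n^r)$ carries the trivial $W$-action by the previous paragraph; the reducible locus is a union of strata of polystable length $s>1$, each a finite symmetric-group quotient of a product of lower-rank pieces on which the diagonal central action is absorbed by the connected tori appearing in the proof of Theorem \ref{thm:SL-2strata}, so $W$ acts trivially there as well. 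Semisimplicity of the $W$-action then forces triviality on $H^*_c(\mathcal{R}_r^{irr}SL_n)$, and hence, the locus being smooth, on $H^*$. Throughout I would invoke the strong deformation retraction of \cite{FL1} onto the $SU(n)$- and $PU(n)$-representation spaces, which is $W$-equivariant, both to realize the homotopies above honestly on the compact models and to guarantee compatibility with the mixed Hodge structures.

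The hard part is exactly this last transfer. The homotopy that trivializes the action on $SL_n^r$ moves the $i$-th matrix by a \emph{non-central} element, so it neither preserves irreducibility nor commutes with conjugation, and therefore descends neither to $\mathcal{R}_r^{irr}SL_n$ nor to the character variety. Equivalently, the content of the theorem is that the monodromy of the determinant fibration $\mathcal{X}_r^{irr}SL_n\to\mathcal{X}_r^{irr}GL_n\xrightarrow{\det}(\mathbb{C}^*)^r$ --- which is precisely the central $W$-action on the fibre --- is cohomologically trivial. For surface groups the analogous $\mathbb{Z}_n^{2g}$-monodromy is \emph{nontrivial} and is the very source of topological mirror symmetry, so any correct argument must use a genuinely free-group-specific input; the equivariant retraction of \cite{FL1} to the compact groups, combined with the polystable stratification, is the feature I would exploit to kill this monodromy.
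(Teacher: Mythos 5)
Your proposal has the same overall architecture as the paper's proof: reformulate the theorem as triviality of the $W=\mathbb{Z}_{n}^{r}$-action on cohomology, prove triviality on the full representation space by interpolation inside a connected group (your properness remark for $H_{c}^{*}$ on the non-compact $SL_{n}^{r}$ is correct), treat the complement of the irreducible locus stratum by stratum, glue with a long exact sequence (your semisimplicity argument playing the role of the paper's 5-lemma, Proposition \ref{prop:snake-cohomology}), descend to the conjugation quotient by equivariant cohomology (as in Proposition \ref{prop:SUPUisomorphisms}), and use the retraction of \cite{FL1} plus algebraicity of the quotient for the mixed Hodge statement. However, two of your steps fail as written, and they are precisely where the paper's technical content lies. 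First, over $\mathbb{C}$ the complement of $\mathcal{R}_{r}^{irr}SL_{n}$ in $SL_{n}^{r}$ is \emph{not} ``a union of strata of polystable length $s>1$, each a finite symmetric-group quotient of a product of lower-rank pieces'': for the complex group, reducible does not imply polystable, so the reducible locus contains indecomposable representations (non-split block-triangular ones) that are not conjugate to direct sums of irreducibles and are covered by no such stratum. That description of the complement is valid only for the compact group $SU(n)$, where every representation is completely reducible; this is exactly why the paper runs the entire stratification-and-excision argument on $SU_{r,n}$ (Theorem \ref{thm:trivial-action}, Lemmas \ref{lem:action-on-strata} and \ref{lem:induction}) and only afterwards transfers to the complex varieties, using \cite{FL1} for $\mathcal{X}_{r}SL_{n}$ and for the \emph{closed} strictly polystable locus, and then recovering the open irreducible locus by comparing the two long exact sequences of pairs via the 5-lemma (Theorem \ref{thm:equality-irr-PGL-SL}). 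Your passing appeal to the \cite{FL1} retraction does not repair this: a strong deformation retraction preserving a closed invariant subset does not by itself restrict to its open complement, so the irreducible locus must still be handled by the excision/5-lemma step on the compact side.

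Second, your claim that on a strictly polystable stratum the diagonal central action is ``absorbed by the connected tori appearing in the proof of Theorem \ref{thm:SL-2strata}'' is false. That torus is $H=\ker m$ with $m(\sigma_{1},\dots,\sigma_{s})=\prod_{j}\sigma_{j}^{n_{j}/d}$, $d=\gcd(n_{1},\dots,n_{s})$, whereas a diagonal central element $(\sigma,\dots,\sigma)$ with $\sigma^{n}=1$ only satisfies $\prod_{j}\sigma^{n_{j}}=\sigma^{n}=1$; in general $\sigma^{n/d}\neq1$, so $(\sigma,\dots,\sigma)\notin H$. Concretely, for $n=4$ and the stratum $[2^{2}]$, taking $\sigma$ of order $4$ on one generator, the element $(\sigma,\sigma)$ lies outside $H$ and even outside the identity component of the full group of determinant-compatible blockwise scalings $\{\tau_{1}^{2}\tau_{2}^{2}=1\}$ (its connected component is detected by $\tau_{1}\tau_{2}=-1$), so \emph{no} connected group of scalings interpolates it to the identity. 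This is the entire point of the paper's Lemma \ref{lem:action-on-strata}: the interpolating homotopy there is asymmetric, scaling the first block by $e^{-2\pi itq/n}$ and the remaining blocks by $e^{2\pi itp/n}$, where $p$ is a block size coprime to $n$ and $q=n-p$, which preserves the determinant-one condition for every $t$ and ends at central multiplication by a primitive $n$-th root of unity (with a separate adjustment when $d>1$). Without this lemma, or a substitute for it, your stratum-by-stratum induction has no base; with it, your argument essentially becomes the paper's proof.
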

The proof of Theorem \ref{thm:irred-Zn-action} is delayed to Section
5. We will use differential geometric techniques, taking advantage
of the fact that $\mathcal{X}_{r}^{irr}SL_{n}$ is a smooth variety
and $\mathcal{X}_{r}^{irr}PGL_{n}$ is an orbifold (see \cite{FL2,Sik}).

Assuming Theorem \ref{thm:irred-Zn-action}, we can now prove the
main result of the article. 
\begin{thm}
\label{thm:Main} Let $\Gamma=F_{r}$. Then, for all $[k]\in\mathcal{P}_{n}$,
we have $e(\mathcal{X}_{r}^{[k]}SL_{n})=e(\mathcal{X}_{r}^{[k]}PGL_{n})$.
Consequently, $e(\mathcal{X}_{r}SL_{n})=e(\mathcal{X}_{r}PGL_{n})$.\end{thm}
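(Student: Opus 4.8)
The plan is to prove the per-stratum equality $e(\mathcal{X}_{r}^{[k]}SL_{n})=e(\mathcal{X}_{r}^{[k]}PGL_{n})$ by splitting into two cases according to the length $s=|[k]|$ of the partition, and then to deduce the global statement $e(\mathcal{X}_{r}SL_{n})=e(\mathcal{X}_{r}PGL_{n})$ by summing over all partitions using the additivity of the Serre polynomial with respect to the stratifications of Proposition~\ref{prop:locally-closed-SL+PGL}.

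For partitions of length $s>1$ (the strictly polystable strata), I would combine Theorem~\ref{thm:SL-2strata} with Proposition~\ref{prop:RPG-fibration}. The former gives $e(\mathcal{X}_{r}^{[k]}GL_{n})=(x-1)^{r}e(\mathcal{X}_{r}^{[k]}SL_{n})$, while the latter gives $e(\mathcal{X}_{r}^{[k]}GL_{n})=(x-1)^{r}e(\mathcal{X}_{r}^{[k]}PGL_{n})$. Since $\mathbb{Z}[x]$ is an integral domain and the factor $(x-1)^{r}$ is nonzero, cancelling it immediately yields $e(\mathcal{X}_{r}^{[k]}SL_{n})=e(\mathcal{X}_{r}^{[k]}PGL_{n})$ for every $[k]$ with $s>1$. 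This part is essentially formal once the two propositions are in hand.

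For the single remaining partition of length $s=1$, namely the irreducible stratum $[k]=[n]$, the cancellation argument is unavailable because the fiber $\mathbb{Z}_{n}^{r}$ of the map $\mathcal{X}_{r}^{irr}SL_{n}\to\mathcal{X}_{r}^{irr}PGL_{n}$ is finite and disconnected, so Proposition~\ref{prop:RPG-fibration} does not apply (the connectedness of $H$ in the proof of Theorem~\ref{thm:SL-2strata} used $s>1$). Here I would invoke Theorem~\ref{thm:irred-Zn-action}, which asserts that the central $\mathbb{Z}_{n}^{r}$-action induces an isomorphism of mixed Hodge structures $H^{*}(\mathcal{X}_{r}^{irr}SL_{n})\cong H^{*}(\mathcal{X}_{r}^{irr}PGL_{n})$. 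Since the Serre polynomial is computed directly from the mixed Hodge numbers, such an isomorphism forces $e(\mathcal{X}_{r}^{irr}SL_{n})=e(\mathcal{X}_{r}^{irr}PGL_{n})$, establishing the case $[k]=[n]$ and hence completing the per-stratum claim for all $[k]\in\mathcal{P}_{n}$.

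Finally, to pass from the strata to the total character varieties, I would apply the additive property of the $E$-polynomial to the disjoint-union decompositions of Proposition~\ref{prop:locally-closed-SL+PGL}, writing
\[
e(\mathcal{X}_{r}SL_{n})=\sum_{[k]\in\mathcal{P}_{n}}e(\mathcal{X}_{r}^{[k]}SL_{n})
\quad\text{and}\quad
e(\mathcal{X}_{r}PGL_{n})=\sum_{[k]\in\mathcal{P}_{n}}e(\mathcal{X}_{r}^{[k]}PGL_{n}),
\]
and then invoking the termwise equality just proved. The genuine mathematical content, and the main obstacle, is entirely concentrated in Theorem~\ref{thm:irred-Zn-action}: controlling how the finite central group acts on the cohomology of the irreducible locus. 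Everything else in this proof is a bookkeeping assembly of results already established, so I expect the write-up of the final statement to be short, with the real work deferred to Section~5.
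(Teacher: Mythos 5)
Your proposal is correct and follows essentially the same route as the paper: Theorem~\ref{thm:irred-Zn-action} handles the irreducible stratum, Theorem~\ref{thm:SL-2strata} combined with Proposition~\ref{prop:RPG-fibration} handles the strata of length greater than one (the paper's proof cites only Theorem~\ref{thm:SL-2strata}, but the cancellation of $(x-1)^{r}$ against the $PGL_{n}$ formula that you spell out is exactly what is intended), and additivity over the stratification of Proposition~\ref{prop:locally-closed-SL+PGL} concludes. Your write-up is, if anything, slightly more explicit than the paper's about where Proposition~\ref{prop:RPG-fibration} enters.
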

\begin{proof}
From Theorem \ref{thm:irred-Zn-action}, we have $H^{*}(\mathcal{X}_{r}^{irr}SL_{n})\cong H^{*}(\mathcal{X}_{r}^{irr}PGL_{n})$
as mixed Hodge structures, so that their $E$-polynomials coincide.
For any other stratum $[k]=[1^{k_{1}}\cdots n^{k_{n}}]$, which has
more than one part, the equality $e(\mathcal{X}_{r}^{[k]}SL_{n})=e(\mathcal{X}_{r}^{[k]}PGL_{n})$
follows from Theorem~\ref{thm:SL-2strata}. Finally, the last statement
follows from the additivity of the $E$-polynomial applied to the
locally-closed stratifications of Proposition \ref{prop:locally-closed-SL+PGL}. 
\end{proof}
Noting that $SL_{n}$ and $PGL_{n}$ are Langlands dual groups, and
that our proof seems well adapted to more general actions of finite
subgroups, we put forward the following conjecture (answered here
in the case $G=SL_{n}$), and plan to address the general statement
in a future work. 
\begin{conjecture}
\label{conj:Langlands-dual}Let $\Gamma=F_{r}$ and $G$, $G^{L}$
be complex reductive Langlands dual groups. Then both $\mathcal{X}_{r}G$
and $\mathcal{X}_{r}G^{L}$ are of Hodge-Tate type, and $e(\mathcal{X}_{r}G)=e(\mathcal{X}_{r}G^{L})$. \end{conjecture}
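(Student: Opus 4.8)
The plan is to prove the conjecture by combining a polystable-type stratification valid for an arbitrary connected reductive $G$ with an induction on the semisimple rank, reducing everything to the irreducible stratum, where the comparison becomes an \emph{isogeny-invariance} statement that is then refined using Langlands duality. First I would dispose of the central torus: writing $G$ as an almost direct product of its connected central torus $Z(G)^{\circ}$ and its derived (semisimple) subgroup $G_{der}$, the factor $\mathcal{X}_{r}Z(G)^{\circ}\cong(Z(G)^{\circ})^{r}\cong(\mathbb{C}^{*})^{rl}$ contributes $(x-1)^{rl}$ by Corollary \ref{cor:special-fibration}, with $l=\dim Z(G)^{\circ}$. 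Since $\operatorname{rank}\Phi=\operatorname{rank}\Phi^{\vee}$, one has $\dim Z(G)^{\circ}=\dim Z(G^{L})^{\circ}$, so these contributions match; handling the finite gluing group by the equivariant formula of Theorem \ref{thm:W-fibration}, the problem reduces to semisimple $G$.

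Second, I would set up the stratification by polystable (Jordan--Hölder) type for general semisimple $G$, generalizing Proposition \ref{prop:locally-closed-SL+PGL}: a polystable class is determined by a Levi subgroup $L\subseteq G$ together with a stable class into $L$, so the strata are indexed by $G$-conjugacy classes of Levis, each stratum being $\mathcal{X}_{r}^{irr}L$ modulo the relative Weyl group $W_{G}(L)$ and twisted by $Z(L)$. The key structural input from duality is that Levis correspond, $L\mapsto L^{L}$, with $L^{L}\subseteq G^{L}$ a Levi of the dual type and $W_{G}(L)=W_{G^{L}}(L^{L})$, so the two stratifications are in bijection. For a \emph{proper} Levi $L$, its semisimple rank is smaller than that of $G$, so by induction the conjecture holds for the Langlands dual pair $(L,L^{L})$; combining this with the connected-kernel fibration argument of Theorem \ref{thm:SL-2strata} (whose only ingredient is that the relevant multiplication map $Z(L)^{\,s}\to Z(G)$ has \emph{connected} kernel, a torus, so the fibration is special and the $W_{G}(L)$-equivariant $E$-polynomials multiply) matches the two strata. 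This reduces the whole statement to the irreducible stratum of $G$ versus $G^{L}$.

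Third, for the irreducible stratum I would prove isogeny invariance by generalizing Theorem \ref{thm:irred-Zn-action}: using the deformation retraction of $\mathcal{X}_{r}^{irr}G_{sc}$ onto its compact analogue (following \cite{FL1}), the finite central action of $Z(G_{sc})^{r}$ extends, in the compact model, to an action of a \emph{connected} group, hence acts trivially on $H^{*}(\mathcal{X}_{r}^{irr}G_{sc})$. Since any isogeny form satisfies $\mathcal{X}_{r}^{irr}G=\mathcal{X}_{r}^{irr}G_{sc}/\pi_{1}(G)^{r}$ with $\pi_{1}(G)\subseteq Z(G_{sc})$ (every $F_{r}$-representation lifts, as $F_{r}$ has cohomological dimension one), taking invariants gives $H^{*}(\mathcal{X}_{r}^{irr}G)\cong H^{*}(\mathcal{X}_{r}^{irr}G_{sc})$ as mixed Hodge structures, so $e(\mathcal{X}_{r}^{irr}G)$ depends only on $\Phi$. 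Because the Langlands dual of the simply connected form is the adjoint form of the dual type, for simply-laced $\Phi$ one has $\Phi\cong\Phi^{\vee}$ and hence $(G^{L})_{sc}\cong G_{sc}$; as proper Levis of a simply-laced group are again simply-laced, the induction stays simply-laced and isogeny invariance yields $e(\mathcal{X}_{r}G)=e(\mathcal{X}_{r}G^{L})$ purely geometrically, settling all types $A$, $D$, $E$.

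The main obstacle is the non-simply-laced case, where $\Phi\not\cong\Phi^{\vee}$ (e.g. $B_{n}\leftrightarrow C_{n}$, $F_{4}$, $G_{2}$): here isogeny invariance alone does not suffice, and one must prove the genuinely new equality $e(\mathcal{X}_{r}^{irr}G_{sc}^{\Phi})=e(\mathcal{X}_{r}^{irr}G_{sc}^{\Phi^{\vee}})$. I expect this to require arithmetic input rather than the fibration machinery: the counts $\#\mathcal{X}_{r}G(\mathbb{F}_{q})$ are controlled by $|G_{sc}(\mathbb{F}_{q})|=q^{N}\prod_{i}(q^{d_{i}}-1)$, whose degrees $d_{i}$ depend only on the Weyl group and so coincide for $\Phi$ and $\Phi^{\vee}$; together with the Pontryagin dualities $Z(G^{L})\cong\widehat{\pi_{1}(G)}$ and $\pi_{1}(G^{L})\cong\widehat{Z(G)}$ (which make the central correction terms symmetric), this should force equality of the counting polynomials. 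Establishing that these counts are genuinely polynomial in $q$ — equivalently, that the varieties are of \emph{Hodge-Tate type}, the first assertion of the conjecture — and then invoking Katz's theorem (see \cite{HRV}) would upgrade the point-count equality to the equality of $E$-polynomials; the Hodge-Tate claim itself propagates from the irreducible strata to all of $\mathcal{X}_{r}G$ since the torus-bundle building blocks preserve balancedness by Theorem \ref{thm:W-fibration} (cf. Remark \ref{rem:Hodge-Tate-open}). Proving polynomiality and purity for the irreducible stratum of an arbitrary simple group, without the explicit $GL_{n}$ formulas of \cite{MR}, is where I expect the real difficulty to lie.
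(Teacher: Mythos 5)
You should first be aware that the statement you were asked to prove is stated in the paper as a \emph{conjecture}, not a theorem: the paper proves only the case $(G,G^{L})=(SL_{n},PGL_{n})$ of the $E$-polynomial equality (Theorem \ref{thm:Main}), explicitly defers the general case to future work, and remarks immediately after the conjecture that even the Hodge--Tate assertion is open for $SL_{n}$ beyond $n=2,3$. So there is no proof in the paper to compare against, and your proposal must be judged on its own terms. As a program it is sensible and runs parallel to the paper's strategy in the known case (stratify by polystable type, reduce to the irreducible stratum, kill the finite central action on cohomology), and your observation that isogeny invariance alone would settle the simply-laced types, with $B_{n}/C_{n}$, $F_{4}$, $G_{2}$ as the genuinely new content, is a good structural insight. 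But it is a program with gaps, not a proof; you honestly concede the non-simply-laced core and the Hodge--Tate/polynomiality claims are open, and those are exactly the points the paper also leaves open.

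Two of your intermediate steps have concrete problems. First, in the stratum comparison: each stratum is a quotient of (a twisted form of) $\mathcal{X}_{r}^{irr}L$ by the relative Weyl group $W_{G}(L)$, so matching strata of $G$ and $G^{L}$ requires the $W_{G}(L)$-\emph{equivariant} equality $e^{W}(\mathcal{X}_{r}^{irr}L)=e^{W}(\mathcal{X}_{r}^{irr}L^{L})$ in $R(W_{G}(L))[x]$, which is strictly stronger than your inductive hypothesis $e(\mathcal{X}_{r}L)=e(\mathcal{X}_{r}L^{L})$. In the paper's Theorem \ref{thm:SL-2strata} this issue never arises because the $GL_{n}$ and $SL_{n}$ strata sit inside a single special $S_{\mathbf{n}}$-fibration, so the symmetric group acts on the \emph{same} cohomology on both sides; there is no map at all between $\mathcal{X}_{r}^{irr}L$ and $\mathcal{X}_{r}^{irr}L^{L}$, so your equivariant matching is unestablished. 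Relatedly, the paper's connected-kernel trick ($H\cong(\mathbb{C}^{*})^{r(s-1)}$) exploits the central $\mathbb{C}^{*}$ of $GL_{n}$ and the determinant; for semisimple $G$ the center is finite and no analogue of your map $Z(L)^{s}\to Z(G)$ with connected kernel is supplied. Second, your treatment of the irreducible stratum asserts that the finite central action extends, in the compact model, to a connected group action on the irreducible locus --- but this is precisely the naive argument the paper points out \emph{fails}: the remark following Corollary \ref{cor:action-on-total} notes that left multiplication by elements of $SU(n)^{r}$ does not preserve irreducibility (e.g.\ $(A^{-1},A^{-1})\cdot(A,B)=(I,A^{-1}B)$ lies in the $[1^{n}]$-stratum), which is why the paper instead deduces triviality on the open stratum indirectly, via the homotopies of Lemma \ref{lem:action-on-strata} on the strictly polystable strata and the 5-lemma argument of Proposition \ref{prop:snake-cohomology}. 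Your simply-laced conclusion could likely be rescued by generalizing that indirect scheme to $G_{sc}$ (which requires a polystable stratification and the analogue of Lemma \ref{lem:action-on-strata} for arbitrary $G$, neither of which you construct), but as written the step is wrong, and with it the claimed resolution of types $A$, $D$, $E$.
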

\begin{rem}
The part of the conjecture claiming Hodge-Tate type is still largely
open, even for $G=SL_{n}$ (see also Remark \ref{rem:Hodge-Tate-open}).
To the best of our knowledge, the only free group character varieties
that are known to be balanced are for $G=SL_{n}$ and $G=PGL_{n}$
with $n=2,3$ (see \cite{LM}). 
\end{rem}

\subsection{Katz's theorem and the case $n$ odd}

When $n$ is odd, there is an alternative method to prove that the
Serre polynomials of $\mathcal{X}_{r}SL_{n}$ and $\mathcal{X}_{r}PGL_{n}$
coincide. The argument was mentioned in \cite[Remark 9]{LM}, and
we detail it here, for convenience. Denote by $\mathbb{F}_{q}$ a
finite field with $q$ elements and characteristic $p$, so that $q=p^{s}$,
for some $s\in\mathbb{N}$. A scheme $X$, defined over $\mathbb{Z}$,
is called of \emph{polynomial type} if there is a polynomial $C_{X}(t)\in\mathbb{Z}[t]$
(called the \emph{counting polynomial} for $X$) such that the number
of $\mathbb{F}_{q^{s}}$ points of $X$ is precisely 
\[
|X/\mathbb{F}_{q^{s}}|=C_{X}(q^{s}),
\]
for every $s$ and almost every prime $p$. In \cite[Appendix]{HRV}
(see also \cite{BH}) N. Katz showed that if such a scheme $X$ is
of polynomial type, with counting polynomial $C_{X}$, then the Serre
polynomial of the complex variety $X(\mathbb{C})=X\otimes_{\mathbb{Z}}\mathbb{C}$
coincides with the counting polynomial: 
\[
E_{x}(X(\mathbb{C}))=C_{X(\mathbb{C})}(x).
\]
To apply this to our character varieties, note that the natural surjective
morphism of algebraic groups 
\[
SL_{n}(\mathbb{F}_{p})\to PGL_{n}(\mathbb{F}_{p}),
\]
has as kernel the scalar matrices $aI$ of determinant $1$, so that
$a^{n}=1$. By Dirichlet's Theorem on arithmetic progressions, for
a fixed $n$ odd, there exists an infinite number of primes such that
$(n,p-1)=1$, in which cases there are no non-trivial roots of unity,
so that $SL_{n}(\mathbb{F}_{p})\simeq PGL_{n}(\mathbb{F}_{p})$. This
implies that the representation spaces $\mathcal{R}_{r}SL_{n}(\mathbb{F}_{p})$
and $\mathcal{R}_{r}PGL_{n}(\mathbb{F}_{p})$ are in bijective correspondence,
and the same holds for the number of points of the character varieties
over $\mathbb{F}_{q}$: 
\[
\left|\mathcal{X}_{r}SL_{n}(\mathbb{F}_{p})\right|=\left|\mathcal{X}_{r}PGL_{n}(\mathbb{F}_{p})\right|.
\]
In \cite[Corollary 2.6]{MR} it was shown that $PGL_{n}$-character
varieties are of polynomial type. Therefore, by this result, the $SL_{n}$-character
varieties are also polynomial-count, for $n$ odd, with the same counting
polynomial.

\section{The Irreducible Case}

In this section we prove that $E(\mathcal{X}_{r}^{irr}SL_{n})=E(\mathcal{X}_{r}^{irr}PGL_{n})$
for all $r,n\geq1$, as stated in Theorem~\ref{thm:irred-Zn-action},
thus completing the proof of the main result. Our methods are geometric
in the sense that we mainly use complex algebraic and differential
geometry. In particular, we will use the compact versions of all the
character varieties that we have defined before.

\subsection{Compact representation spaces and their irreducible subspaces}

Consider the compact groups $U(n)$, $SU(n)$ and $PU(n)$, which
are related through the fibrations: 
\[
\begin{array}{ccccc}
SU(n) & \to & U(n) & \to & U(1)=S^{1}\\
S^{1} & \to & U(n) & \to & PU(n),
\end{array}
\]
so that $PU(n)\cong U(n)/S^{1}\cong SU(n)/\mathbb{Z}_{n}$, where
we identify the cyclic group with the group of $n^{th}$ roots of
unity 
\[
\mathbb{Z}_{n}=\{e^{\frac{2\pi ik}{n}}\,:\,k\in\mathbb{Z}\}\subset S^{1},
\]
and with the center of $SU(n)$. For $r\in\mathbb{N}$, consider the
\emph{compact representation spaces}, 
\[
U_{r,n}:=\hom(F_{r},U(n))\cong U(n)^{r},
\]
and similarly $S_{r,n}:=\hom(F_{r},SU(n))\cong SU(n)^{r}$ and $P_{r,n}:=\hom(F_{r},PU(n))\cong PU(n)^{r}$,
where the isomorphisms are obtained when fixing a set of $r$ generators
of the free group $F_{r}$. Because we are dealing with the free group,
representations can be multiplied componentwise: all spaces $U_{r,n}$,
$S_{r,n}$ and $P_{r,n}$ are in fact Lie groups. The first fibration
above defines a determinant map, which is actually a homomorphism
of groups: 
\[
\det:U_{r,n}\to U_{r,1},
\]
whose kernel is precisely $S_{r,n}$.

Now consider the \emph{irreducible representation} spaces, 
\[
U_{r,n}^{*}:=\hom^{irr}(F_{r},U(n)),\quad\quad S_{r,n}^{*}:=\hom^{irr}(F_{r},SU(n)),\quad\quad P_{r,n}^{*}:=\hom^{irr}(F_{r},PU(n)),
\]
which are open subsets in the compact representation spaces: 
\[
U_{r,n}^{*}\subset U_{r,n},\quad\quad SU_{r,n}^{*}\subset SU_{r,n},\quad\quad PU_{r,n}^{*}\subset PU_{r,n}.
\]
Also note that $SU_{r,n}^{*}=U_{r,n}^{*}\cap SU_{r,n}$. These irreducible
subspaces $U_{r,n}^{*},SU_{r,n}^{*},PU_{r,n}^{*}$, are not Lie groups,
but we observe the following straightforward properties of the natural
multiplication action of $U_{r,1}$ on $U_{r,n}$: 
\begin{equation}
\sigma\cdot\rho=(\sigma_{1},\cdots,\sigma_{r})\cdot(\rho_{1},\cdots,\rho_{r}):=(\sigma_{1}\rho_{1},\cdots,\sigma_{r}\rho_{r}),\quad\quad\sigma_{i}\in U(1),\ \rho_{i}\in U(n),\label{eq:gamma}
\end{equation}
where $\rho_{i}=\rho(\gamma_{i})$ and $\sigma_{i}=\sigma(\gamma_{i})$
for $\gamma_{1},\cdots,\gamma_{r}$ the fixed chosen generators of
$F_{r}$. 
\begin{prop}
The action of $U_{r,1}$ on $U_{r,n}$ is such that: \\
 (i) the subspace $U_{r,n}^{*}$ is preserved under the action;\\
 (ii) $PU_{r,n}$ and $PU_{r,n}^{*}$ are, respectively, the orbit
spaces of $U_{r,n}$ and $U_{r,n}^{*}$ under $U_{r,1}$;\\
 (iii) $SU_{r,n}^{*}=\det^{-1}(1)$ for the restriction $\det:U_{r,n}^{*}\to U_{r,1}$,
where $1\in U_{r,1}$ denotes the trivial one dimensional representation. 
\end{prop}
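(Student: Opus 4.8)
The plan is to derive all three assertions by unwinding the definitions, the only non-formal ingredient being the behaviour of irreducibility under the scalar action. For (i), I would observe that the $U_{r,1}$-action rescales each matrix $\rho(\gamma_i)$ by a scalar $\sigma_i\in U(1)$, and that such a rescaling leaves unchanged the lattice of common invariant subspaces of $(\rho(\gamma_1),\ldots,\rho(\gamma_r))$ in $\mathbb{C}^n$: a linear subspace $W$ satisfies $\rho(\gamma_i)W\subseteq W$ if and only if $\sigma_i\rho(\gamma_i)W\subseteq W$, since $\sigma_i$ is a nonzero scalar and $W$ is linear. Hence $\rho$ admits a proper nonzero common invariant subspace exactly when $\sigma\cdot\rho$ does, so irreducibility is preserved and $U_{r,n}^*$ is invariant.

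For (ii), I would begin from the quotient homomorphism $U(n)\to PU(n)=U(n)/S^1$ with central kernel $S^1$, and take its $r$-fold product to obtain a continuous surjection $q\colon U_{r,n}=U(n)^r\to PU(n)^r=PU_{r,n}$; surjectivity uses that $F_r$ is free, so every homomorphism into $PU(n)$ lifts coordinatewise to $U(n)$. Two lifts of a given $\bar\rho$ differ precisely by multiplying each generator by an element of $S^1$, which is exactly the orbit relation of the action \eqref{eq:gamma}; since $U(n)\to PU(n)$ is an open quotient of compact Lie groups, $q$ is open and identifies $PU_{r,n}$ with the orbit space $U_{r,n}/U_{r,1}$. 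For the irreducible loci, part (i) shows that irreducibility is constant on $U_{r,1}$-orbits, hence descends to $PU_{r,n}$; taking $PU_{r,n}^*=q(U_{r,n}^*)$ and restricting $q$ then yields $PU_{r,n}^*=U_{r,n}^*/U_{r,1}$.

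For (iii), I would simply combine the recorded identities. By construction $\det\colon U_{r,n}\to U_{r,1}$ is the group homomorphism with kernel $SU_{r,n}$, so that $SU_{r,n}=\det^{-1}(1)$, where $1$ denotes the trivial one-dimensional representation. Intersecting with the irreducible locus and invoking the observation $SU_{r,n}^*=U_{r,n}^*\cap SU_{r,n}$ already noted above gives $SU_{r,n}^*=U_{r,n}^*\cap\det^{-1}(1)$, which is exactly the fibre over $1$ of the restriction $\det\colon U_{r,n}^*\to U_{r,1}$.

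The step I expect to need the most care is (ii): one must ensure that the set-theoretic image $q(U_{r,n}^*)$ genuinely equals $\hom^{irr}(F_r,PU(n))$ for the intended notion of irreducibility of $PU(n)$-valued homomorphisms. The cleanest route is to \emph{define} irreducibility of $\bar\rho$ through any of its lifts --- which is legitimate precisely because of part (i), since any two lifts lie in the same $U_{r,1}$-orbit --- so that the matching of irreducible loci becomes tautological; checking that this agrees with the centralizer-based notion of Section 3 (finiteness of $Z_{\bar\rho}$, the centre of $PU(n)$ being trivial) is the single point that warrants an explicit verification.
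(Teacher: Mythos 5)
Your proof is correct. Note that the paper offers no proof of this Proposition at all: it is presented as a list of ``straightforward properties'' of the action \eqref{eq:gamma}, so your argument supplies exactly the routine verification the authors leave implicit, and along the intended lines --- invariance of common invariant subspaces under scalar rescaling for (i), coordinatewise lifting along $U(n)\to PU(n)$ (using freeness of $F_{r}$) for (ii), and the kernel description of $\det$ together with $SU_{r,n}^{*}=U_{r,n}^{*}\cap SU_{r,n}$ for (iii).

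The subtlety you flag in (ii) is genuine but resolves as you suggest: for any lift $\rho$ of $\bar{\rho}$, linear irreducibility of $\rho$ is equivalent to finiteness of the centralizer $Z_{\bar{\rho}}\subset PU(n)$. Indeed, if $\rho$ is irreducible and $[g]\in Z_{\bar{\rho}}$, then $g\rho(\gamma_{i})g^{-1}=\lambda_{i}\rho(\gamma_{i})$ for scalars $\lambda_{i}\in S^{1}$; taking determinants gives $\lambda_{i}^{n}=1$, so the $\lambda_{i}$ assemble into one of the finitely many characters $\lambda\in\hom(F_{r},\mathbb{Z}_{n})$, and by Schur's lemma each such $\lambda$ contributes at most one element $[g]$ to $Z_{\bar{\rho}}$ (two choices of $g$ for the same $\lambda$ differ by a scalar). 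Conversely, a proper decomposition $\rho\cong\rho_{1}\oplus\rho_{2}$ produces a positive-dimensional torus of blockwise scalars whose image in $PU(n)$ centralizes $\bar{\rho}$. Hence the lift-based and centralizer-based notions of irreducibility agree, independently of the chosen lift by your part (i).
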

Now define: 
\[
C_{r,n}:=\hom(F_{r},\mathbb{Z}_{n})\subset\hom(F_{r},U(1))=U_{r,1},
\]
so that $C_{r,n}\cong(\mathbb{Z}_{n})^{r}$ is a subgroup of $U_{r,n}$
and can be identified with the center of $SU_{r,n}$. In the same
way as $PU(n)=SU(n)/\mathbb{Z}_{n}$, we can also get $PU_{r,n}$
and $PU_{r,n}^{*}$ as finite quotients of $SU_{r,n}$ and $SU_{r,n}^{*}$:
\[
PU_{r,n}=SU_{r,n}/C_{r,n},\quad\quad PU_{r,n}^{*}=SU_{r,n}^{*}/C_{r,n}.
\]

\subsection{Central action on representation spaces}

We now define a \emph{stratification by polystable type} of $U_{r,n}$,
$SU_{r,n}$ and $PU_{r,n}$ in complete analogy with the stratifications
in \cite[Section 4.1]{FNZ} (for $GL_{n}$) and in Section \ref{section:stratifications}
above (for $PGL_{n}$ and $SL_{n}$).

Given a partition $[k]=[1^{k_{1}}\cdots j^{k_{j}}\cdots n^{k_{n}}]$,
we say that $\rho\in U_{r,n}$ is of type $[k]$ if $\rho$ is conjugated
to $\bigoplus_{j=1}^{n}\rho_{j}$, where each $\rho_{j}$ is a direct
sum of $k_{j}$ \emph{irreducible} representations of $U_{r,j}^{*}$,
for each $j=1,\cdots,n$. We denote representations of type $[k]$
by $U_{r}^{[k]}\subset U_{r,n}$ and let: 
\[
SU_{r}^{[k]}=U_{r}^{[k]}\cap SU_{r,n},\quad\quad PU_{r}^{[k]}=U_{r}^{[k]}/U_{r,1}\subset PU_{r,n}.
\]
Note that all strata are locally closed, and that the irreducible
strata $[k]=[n]$ are the only ones that are open (in the respective
representation spaces), corresponding to the partition into one single
part. 
\begin{prop}
\label{prop:stratification-type}Fix $n\in\mathbb{N}$. The representation
spaces $U_{r,n}$, $SU_{r,n}$ and $PU_{r,n}$ can be written as disjoint
unions, labelled by partitions $[k]\in\mathcal{P}_{n}$, of locally
closed submanifolds: 
\[
U_{r,n}=\bigsqcup_{[k]\in\mathcal{P}_{n}}U_{r}^{[k]},\quad\quad SU_{r,n}=\bigsqcup_{[k]\in\mathcal{P}_{n}}SU_{r}^{[k]},\quad\quad PU_{r,n}=\bigsqcup_{[k]\in\mathcal{P}_{n}}PU_{r}^{[k]}.
\]
\end{prop}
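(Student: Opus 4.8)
The plan is to establish the three properties underlying the statement — exhaustiveness, disjointness, and the locally closed submanifold structure — in exact analogy with the $GL_n$ case of \cite{FNZ} (Proposition \ref{prop:locally-closed-GLn}), with the algebraic semisimplicity used there replaced by complete reducibility of unitary representations. First I would record exhaustiveness: since the image of any $\rho\in U_{r,n}$ lies in the compact group $U(n)$, every $\rho$-invariant subspace of $\mathbb{C}^{n}$ admits a $\rho$-invariant orthogonal complement, so $\rho$ is completely reducible and decomposes as a direct sum of irreducible subrepresentations. Collecting the summands by dimension, so that $k_{j}$ counts the irreducible constituents of dimension $j$, assigns to $\rho$ a partition $[k]=[1^{k_{1}}\cdots n^{k_{n}}]\in\mathcal{P}_{n}$ with $\sum_{j}jk_{j}=n$; hence $U_{r,n}=\bigcup_{[k]}U_{r}^{[k]}$.

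For disjointness I would invoke uniqueness of the semisimple decomposition: the multiset of dimensions of the irreducible constituents of $\rho$ (counted with multiplicity) is a conjugation invariant, so the type $[k]$ is well defined and distinct partitions label disjoint subsets. This makes the union a genuine disjoint union. The analogous decompositions for $SU_{r,n}$ and $PU_{r,n}$ then follow formally from the $U_{r,n}$ case, using $SU_{r}^{[k]}=U_{r}^{[k]}\cap SU_{r,n}$ and $PU_{r}^{[k]}=U_{r}^{[k]}/U_{r,1}$.

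The substantive point is that each $U_{r}^{[k]}$ is a \emph{locally closed submanifold}. Here I would reuse the parametrization from the proof of Theorem \ref{thm:SL-2strata}: fix a composition $\mathbf{n}=(n_{1},\dots,n_{s})$ with $p(\mathbf{n})=[k]$, let $L_{\mathbf{n}}=U(n_{1})\times\cdots\times U(n_{s})\subset U(n)$ be the block-diagonal subgroup, and consider the smooth map
\[
U(n)\times_{L_{\mathbf{n}}}\Big(\textstyle\prod_{i=1}^{s}U_{r,n_{i}}^{*}\Big)\longrightarrow U_{r,n},\qquad [g,(\rho^{(i)})]\longmapsto g\cdot\mathrm{diag}(\rho^{(1)},\dots,\rho^{(s)})\cdot g^{-1},
\]
whose image is exactly $U_{r}^{[k]}$. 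Each $U_{r,n_{i}}^{*}$ is open in $U(n_{i})^{r}$, hence a manifold, and the associated bundle over the compact partial flag manifold $U(n)/L_{\mathbf{n}}$ is again a manifold. Schur's lemma (in its unitary form) controls the fibres: two block-diagonal tuples are conjugate only through permutations of equal-size blocks and block-wise unitaries, that is, through the finite group $S_{\mathbf{n}}$ together with $L_{\mathbf{n}}$, so by equivariance the map has constant rank and is finite-to-one onto its image. Combining the constant-rank theorem with properness coming from compactness of $U(n)$, I would conclude that $U_{r}^{[k]}$ is an embedded locally closed submanifold.

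The main obstacle is exactly this last step, and the subtlety is that the type $[k]$ records only the \emph{dimensions} of the irreducible constituents, not their isomorphism classes or multiplicities; consequently a single stratum $U_{r}^{[k]}$ unites several orbit types of the conjugation action (for instance two non-isomorphic irreducibles of dimension $j$, versus one irreducible of dimension $j$ with multiplicity two, both contribute to the same $k_{j}$ yet have centralizers $U(1)^{2}$ and $U(2)$). Hence smoothness and local closedness cannot be read directly off the orbit-type stratification theorem for compact group actions, and one must argue through the explicit parametrization above, verifying that these orbit types assemble into one smooth stratum. Once $U_{r}^{[k]}$ is known to be locally closed and smooth, the two remaining cases follow cleanly: the scalar $U_{r,1}=(S^{1})^{r}$-action preserves each stratum and satisfies $\det(\sigma\cdot\rho)=\sigma^{n}\det(\rho)$, so $\det|_{U_{r}^{[k]}}$ is a submersion and $SU_{r}^{[k]}=(\det|_{U_{r}^{[k]}})^{-1}(1)$ is a smooth submanifold; and since the $U_{r,1}$-action on $U_{r,n}$ is \emph{free} (each $\rho_{i}$ is invertible), the quotient $PU_{r}^{[k]}=U_{r}^{[k]}/U_{r,1}$ is a locally closed submanifold of $PU_{r,n}=U_{r,n}/U_{r,1}$.
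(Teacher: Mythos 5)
Your exhaustiveness and disjointness arguments (complete reducibility of unitary representations, uniqueness of the multiset of dimensions of the irreducible constituents), together with the reduction of the $SU_{r,n}$ and $PU_{r,n}$ cases to the $U_{r,n}$ case, are correct and coincide with the paper's route: the paper's proof just transplants the argument of \cite[Proposition 4.3]{FNZ} (quoted as Proposition \ref{prop:locally-closed-GLn}) from the Zariski to the Euclidean topology and then invokes Proposition \ref{prop:locally-closed-SL+PGL}. The genuine gap is in the step you yourself flag as the main obstacle, and it is not repairable in the form you propose. The claim that the parametrization $U(n)\times_{L_{\mathbf{n}}}\bigl(\prod_{i}U_{r,n_{i}}^{*}\bigr)\to U_{r,n}$ is finite-to-one of constant rank fails exactly when two diagonal blocks carry \emph{isomorphic} irreducible representations: if $\rho^{(1)}\cong\rho^{(2)}$, Schur's lemma produces a copy of $U(2)$ inside the centralizer of $\mathrm{diag}(\rho^{(1)},\dots,\rho^{(s)})$ mixing the two blocks, so the fibre over this point contains $U(2)/(U(1)\times U(1))\cong S^{2}$ and is positive-dimensional, whereas over tuples with pairwise non-isomorphic blocks the fibre is the finite set $S_{\mathbf{n}}$. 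Hence the fibre dimension jumps, the rank is not constant, and the constant-rank-plus-properness conclusion is unavailable.

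Worse, no argument can ``assemble these orbit types into one smooth stratum'', because embedded smoothness is false for strata with repeated blocks. Take $n=2$, $r\geq2$, $[k]=[1^{2}]$: then $U_{r}^{[1^{2}]}$ is the set of commuting $r$-tuples in $U(2)^{r}$ (for $n=2$, reducibility of a unitary tuple is equivalent to simultaneous diagonalizability, i.e.\ to commutativity). For every $X\in\mathfrak{u}(2)$ the curve $(e^{tX},I,\dots,I)$, and its analogue in each coordinate, lies in this stratum, so if $U_{r}^{[1^{2}]}$ were an embedded $C^{1}$-submanifold near the central tuple $(I,\dots,I)$ its tangent space there would contain all of $\mathfrak{u}(2)^{r}$, forcing the stratum to be open near $(I,\dots,I)$; it is not, since arbitrarily small non-commuting (hence irreducible) perturbations of $(I,\dots,I)$ exist. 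Note that the result the paper actually cites asserts only that the strata are \emph{locally closed} (no smoothness), and that is what its proof, and the later applications such as Lemma \ref{lem:induction}, really require: for each $p$ the union $Y_{p}$ of strata of length at least $p$ is closed, because a limit of representations admitting orthogonal invariant decompositions with a fixed dimension vector again admits one (compactness of flag manifolds), and inside $Y_{p}\setminus Y_{p+1}$ each fixed-partition stratum is both open and closed by uniqueness of the dimension multiset; local closedness of $U_{r}^{[k]}$ follows, and the $SU$/$PU$ statements then follow by intersecting with the closed set $\det^{-1}(1)$ and passing to the quotient by the free $U_{r,1}$-action. Your final paragraph (the submersion argument for $\det$, and the smooth quotient for $PU$) also presupposes the smoothness that fails, so it should be replaced by these purely topological observations; the word ``submanifold'' in the statement can only be defended in a weaker, stratified sense, not the embedded one your proof aims at.
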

\begin{proof}
The proof for $U_{n,r}$ is analogous to the proof in \cite[Proposition 4.3]{FNZ}
for $\mathcal{R}_{\Gamma}GL_{n}$, observing that dealing with the
usual Euclidean topology on $U_{n,r}$ works \emph{ipsis verbis} as
with the Zariski topology on $\mathcal{R}_{\Gamma}GL_{n}$. The cases
$SU_{r,n}$ and $PU_{r,n}$ follow as in Proposition \ref{prop:locally-closed-SL+PGL}. 
\end{proof}
Now, we state the main result of this subsection. 
\begin{thm}
\label{thm:trivial-action}The action of $C_{r,n}$ on the compactly
supported cohomology of $SU_{r,n}^{*}=\hom^{irr}(F_{r},SU_{n})$ is
trivial. In particular, the natural quotient under $C_{r,n}$ induces
an isomorphism $H_{c}^{*}(SU_{r,n}^{*})\cong H_{c}^{*}(PU_{r,n}^{*})$. 
\end{thm}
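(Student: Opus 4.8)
The plan is to pass to complex coefficients and to exploit that $SU_{r,n}^{*}$ is a smooth oriented manifold, so that Poincar\'e duality identifies the $C_{r,n}$-modules $H_{c}^{*}(SU_{r,n}^{*})$ and $H^{*}(SU_{r,n}^{*})$; it therefore suffices to prove that $C_{r,n}$ acts trivially on ordinary cohomology $H^{*}(SU_{r,n}^{*};\mathbb{C})$, and for this it is enough to treat each of the $r$ generators of $C_{r,n}\cong(\mathbb{Z}_{n})^{r}$ separately. Since $\mathbb{C}[C_{r,n}]$ is semisimple, the statement is equivalent to the equality of total Betti numbers for the regular $C_{r,n}$-cover $SU_{r,n}^{*}\to PU_{r,n}^{*}$, i.e. to the vanishing $H^{*}(PU_{r,n}^{*};L_{\chi})=0$ for every nontrivial character $\chi$ of $C_{r,n}$, where $L_{\chi}$ is the rank one local system associated to the cover. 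The second assertion of the theorem then follows formally, since $H_{c}^{*}(PU_{r,n}^{*})=H_{c}^{*}(SU_{r,n}^{*})^{C_{r,n}}$ for the free finite quotient.

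First I would set up the ambient torus geometry. The connected torus $U_{r,1}\cong(S^{1})^{r}$ acts freely on $U_{r,n}^{*}$ by scalar multiplication, with quotient $PU_{r,n}^{*}$, and the determinant map exhibits $SU_{r,n}^{*}=\det{}^{-1}(1)$ as the reduction of this principal $U_{r,1}$-bundle to its $n$-torsion subgroup $C_{r,n}\subset U_{r,1}$. Because the structure group reduces to the finite group $C_{r,n}$, the classifying map $PU_{r,n}^{*}\to BU_{r,1}$ factors up to homotopy through $BC_{r,n}$, whose rational cohomology is trivial in positive degrees; consequently all rational characteristic classes of the bundle vanish, the Leray spectral sequence degenerates, and $H^{*}(U_{r,n}^{*})\cong H^{*}(PU_{r,n}^{*})\otimes H^{*}(U_{r,1})$. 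As $U_{r,1}$ is connected it acts trivially on $H^{*}(U_{r,n}^{*})$; equivalently, computing through the $C_{r,n}$-cover $U_{r,1}\times SU_{r,n}^{*}\to U_{r,n}^{*}$ gives $H^{*}(U_{r,n}^{*})\cong H^{*}(U_{r,1})\otimes H^{*}(SU_{r,n}^{*})^{C_{r,n}}$. This pins down the $C_{r,n}$-\emph{invariant} cohomology of $SU_{r,n}^{*}$, but by its very construction it is blind to the non-invariant part: the comparison with $U_{r,n}^{*}$ alone cannot detect $H^{*}(PU_{r,n}^{*};L_{\chi})$ for $\chi\neq 1$.

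The hard part is precisely this non-invariant vanishing, which must be established intrinsically on $SU_{r,n}^{*}$. It is worth stressing why the obvious idea fails: one cannot homotope a generator $c_{i}$ to the identity by scaling the $i$-th component along a path from $1$ to the primitive root $\zeta_{n}$, since such a path leaves $SL_{n}$ (the centre of $SL_{n}$ is disconnected), and no \emph{fixed} non-central multiplier preserves the irreducible locus; thus $c_{i}$ is genuinely not isotopic to the identity through self-maps of $SU_{r,n}^{*}$, and its triviality on cohomology is not homotopical in origin. To obtain it I would work differential-geometrically on the smooth manifold $SU_{r,n}^{*}$: construct a $C_{r,n}$-invariant Morse--Bott function whose critical submanifolds carry $C_{r,n}$-trivial cohomology, so that equivariant Morse theory (equivariant perfection) forces the $C_{r,n}$-action on $H^{*}(SU_{r,n}^{*})$ to be trivial. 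Alternatively, via the strong deformation retraction of \cite{FL1} one may transport the problem to the smooth complex variety $\mathcal{X}_{r}^{irr}SL_{n}$, whose quotient $\mathcal{X}_{r}^{irr}PGL_{n}$ is an orbifold, and run a Hodge-theoretic argument there. In either route the obstacle is the same: producing enough $C_{r,n}$-invariant generators, or a suitable invariant exhaustion, to conclude that the \emph{full} cohomology, and not merely its invariant part, is fixed by the central action; once this is done the stated isomorphism $H_{c}^{*}(SU_{r,n}^{*})\cong H_{c}^{*}(PU_{r,n}^{*})$ is immediate.
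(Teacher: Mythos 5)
Your reductions are correct as far as they go: Poincar\'e duality to pass between $H_{c}^{*}$ and $H^{*}$, the reformulation of the theorem as the vanishing of $H^{*}(PU_{r,n}^{*};L_{\chi})$ for every nontrivial character $\chi$ of $C_{r,n}$, the observation that comparison with $U_{r,n}^{*}$ (or any computation of $U_{r,1}$-quotients) only determines the \emph{invariant} part $H^{*}(SU_{r,n}^{*})^{C_{r,n}}$, and the diagnosis that no scalar path from $1$ to a primitive root of unity stays inside $SU_{r,n}^{*}$, so the action is not homotopically trivial on the irreducible locus. But the proposal stops exactly at the crux: the non-invariant vanishing, which you yourself identify as ``the hard part,'' is never established, and it \emph{is} the theorem. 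The Morse--Bott/equivariant-perfection suggestion is left entirely unexecuted, and it carries a structural problem: over $\mathbb{Q}$ (or $\mathbb{C}$) the Borel equivariant cohomology of a finite group action computes only the invariant subspace, so equivariant perfection by itself cannot detect the summands $H^{*}(PU_{r,n}^{*};L_{\chi})$, $\chi\neq1$; to run your argument one would need triviality of the $C_{r,n}$-action on the cohomology of the critical submanifolds, and for the deepest such locus this is a statement of exactly the same kind as the one being proved. So there is a genuine gap, and it is the entire content of the statement.

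The paper's resolution is a subtractive argument that never constructs anything on the irreducible locus directly, and it is worth seeing why it sidesteps your obstacle. First, the $C_{r,n}$-action on the \emph{whole} space $SU_{r,n}\cong SU(n)^{r}$ is trivial on cohomology, because it is the restriction of left multiplication by the connected group $SU(n)^{r}$ (Lemma \ref{lem:interpolating-homotopy} and Corollary \ref{cor:action-on-total}); here one multiplies by group elements rather than scalars, so there is no determinant obstruction, and irreducibility need not be preserved since one works with all representations at once. Second, on each stratum $SU_{r}^{[k]}$ with $|[k]|\geq2$ the action is trivial by the explicit homotopy of Lemma \ref{lem:action-on-strata}: with at least two blocks available, one rotates the first block by $e^{-2\pi itq/n}$ and the remaining blocks by $e^{2\pi itp/n}$, so the product-of-determinants condition holds along the entire path, and at $t=1$ one has realized a generator of the central action; this compensation between blocks is precisely what is impossible on the single-block (irreducible) stratum, confirming your diagnosis. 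Third, the bootstrap: for a decomposition $X=U\sqcup Y$ into an open and a closed invariant piece, the long exact sequence in compactly supported cohomology and the 5-lemma show that triviality of the action on two of $X$, $Y$, $U$ forces it on the third (Proposition \ref{prop:snake-cohomology}); a finite induction over the closed sets $Y_{p}=\bigcup_{|[k]|\geq p}SU_{r}^{[k]}$ (Lemma \ref{lem:induction}) gives triviality on $Y_{2}$, and then the proposition applied to $X=SU_{r,n}$, $Y=Y_{2}$, $U=SU_{r,n}^{*}$ yields the theorem. This excision step---deducing the action on the open stratum from the ambient space and its closed complement---is the missing idea; with it, no invariant Morse function, invariant exhaustion, or direct analysis of $SU_{r,n}^{*}$ is needed.
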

Note that we use compactly supported cohomology as dealing with non-compact
spaces. To prove Theorem \ref{thm:trivial-action}, we need to show
two results: (i) under a open/closed decomposition of a compact space
$X=U\sqcup Y$, the triviality of the action on $2$ spaces implies
the same for the third one; (ii) the actions of $C_{r,n}$ on the
cohomology of every $[k]$-stratum are trivial, for partitions $[k]$
of length $l>1$. (iii) the action on the whole $SU_{r,n}$ is trivial.
We start with (iii), which uses the following standard lemma (see,
for instance, \cite{CFLO}). 
\begin{lem}
\label{lem:interpolating-homotopy}If $J$ is a finite group acting
on a space $X$ and, for every $g\in J$, the induced map $\hat{g}:X\to X$,
$x\mapsto g\cdot x$ is homotopic to $id_{X}$, then $J$ acts trivially
on $H^{*}(X)$. In particular, if $F$ is a finite subgroup of a connected
group $G$ acting on $X$, then $F$ acts trivially on $H^{*}(X)$.%

\end{lem}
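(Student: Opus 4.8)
The plan is to deduce the statement entirely from the homotopy invariance of (singular) cohomology. The single fact I would use is that homotopic maps $f \simeq h \colon X \to X$ induce equal homomorphisms $f^{*} = h^{*}$ on $H^{*}(X)$. In the application of this lemma the relevant space is the compact Lie group $SU_{r,n}$, so there is no discrepancy between ordinary and compactly supported cohomology to worry about.

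First I would prove the main assertion. Fix $g \in J$; by hypothesis the translation map $\hat{g}$ is homotopic to $\id_{X}$, so the fact above gives
\[
\hat{g}^{*} = (\id_{X})^{*} = \id_{H^{*}(X)}.
\]
Since $g \in J$ was arbitrary, every group element acts as the identity on $H^{*}(X)$, which is precisely the statement that $J$ acts trivially on cohomology.

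To obtain the \emph{in particular} clause I would manufacture the required homotopies from connectedness. Given $g$ in a finite subgroup $F$ of a connected group $G$, I would use that a connected topological group---in our applications a Lie group---is path-connected, and pick a continuous path $\gamma \colon [0,1] \to G$ with $\gamma(0) = e$ and $\gamma(1) = g$. Joint continuity of the action then makes $H(x,t) := \gamma(t)\cdot x$ a homotopy from $\id_{X}$ to $\hat{g}$, so each $\hat{g}$ satisfies the hypothesis of the first part and $F$ acts trivially on $H^{*}(X)$.

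There is essentially no hard step here once homotopy invariance is granted; the only point meriting care is the replacement of \emph{connected} by \emph{path-connected} for $G$, which is legitimate because the groups acting are (locally path-connected) Lie groups, and because the continuity of the action is exactly what upgrades the path $\gamma$ into a genuine homotopy $H$.
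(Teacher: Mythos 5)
Your proposal is correct and follows essentially the same argument as the paper's own proof: homotopy invariance of cohomology gives $\hat{g}^{*}=\id_{H^{*}(X)}$ for each $g$, and for the second clause a path in $G$ from the identity to $g$, combined with continuity of the action, produces the required homotopy. Your added remark that \emph{connected} must be upgraded to \emph{path-connected} (harmless here, since the groups in play are Lie groups) is a small point of care that the paper leaves implicit, but it does not change the argument.
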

Since the action of $C_{r,n}$ is the restriction of the action of
the path connected group $SU(n)^{r}$ acting by left multiplication
the following is clear. 
\begin{cor}
\label{cor:action-on-total}The action of $C_{r,n}\cong(\mathbb{Z}_{n})^{r}\subset SU(n)^{r}$
on $H^{*}(SU_{r,n})=H^{*}(SU(n)^{r})$, is trivial. \end{cor}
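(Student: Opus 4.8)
The plan is to read the corollary off directly from the second statement of Lemma~\ref{lem:interpolating-homotopy}, so the entire task reduces to checking that its hypotheses are met. First I would pin down exactly how $C_{r,n}$ acts. Identifying $C_{r,n}\cong(\mathbb{Z}_{n})^{r}$ with the center of $SU_{r,n}=SU(n)^{r}$, an element $\sigma=(\sigma_{1},\dots,\sigma_{r})\in C_{r,n}$ sends $\rho=(\rho_{1},\dots,\rho_{r})$ to $(\sigma_{1}\rho_{1},\dots,\sigma_{r}\rho_{r})$ as in \eqref{eq:gamma}. Since each $\sigma_{i}$ is a central scalar, this is precisely left translation on the group $SU(n)^{r}$ by the central element $(\sigma_{1}I,\dots,\sigma_{r}I)$. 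Thus the $C_{r,n}$-action on $X=SU_{r,n}$ is the restriction, to the finite subgroup $C_{r,n}$, of the left-translation action of the ambient group $SU(n)^{r}$ on itself.

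Next I would invoke connectedness. Because $SU(n)$ is a connected Lie group, the product $SU(n)^{r}$ is connected, so we are exactly in the situation of the lemma's second assertion: a finite subgroup $F=C_{r,n}$ of a connected group $G=SU(n)^{r}$ acting on $X=SU(n)^{r}$. Concretely, for each $g\in C_{r,n}$ one selects a path $\gamma$ in $SU(n)^{r}$ from $\id$ to $g$, and then $(t,x)\mapsto\gamma(t)\cdot x$ furnishes a homotopy between $\id_{X}$ and the translation $\hat g$. Homotopy invariance of cohomology then gives $\hat g^{*}=\id^{*}$ on $H^{*}(SU_{r,n})=H^{*}(SU(n)^{r})$, and since this holds for every $g\in C_{r,n}$ the induced action on cohomology is trivial.

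I do not expect any genuine obstacle: the corollary is forced by the lemma once the action is correctly described. The only step that requires real attention is the first one, namely recognizing that the scalar (central) multiplication \eqref{eq:gamma} coincides with left translation inside the \emph{connected} group $SU(n)^{r}$. It is precisely the connectedness of the ambient group, and not of $C_{r,n}$ itself, that makes each translation homotopic to the identity and thereby kills the action on cohomology; everything else is automatic from homotopy invariance, with no computation needed.
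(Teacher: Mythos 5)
Your proposal is correct and is essentially identical to the paper's argument: the paper also observes that the $C_{r,n}$-action is the restriction of left multiplication by the path-connected group $SU(n)^{r}$ and then cites Lemma~\ref{lem:interpolating-homotopy}. Your added care in checking that scalar multiplication as in \eqref{eq:gamma} really is left translation by the central element $(\sigma_{1}I,\dots,\sigma_{r}I)\in SU(n)^{r}$ is exactly the point the paper leaves implicit.
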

\begin{rem}
Although the action of $C_{r,n}$ preserves the stratification, the
argument above cannot be applied to the individual strata $SU_{r,n}^{[k]}$;
indeed, it is not clear what \emph{connected} group could interpolate
the action of the discrete group $C_{r,n}$ on the irreducible stratum.
For example, using the left multiplication by the whole group, the
action of $(A^{-1},A^{-1})\in SU(n)^{2}$ on an irreducible pair $(A,B)\in SU(n)^{2}$
gives the pair $(I,A^{-1}B)$, which clearly belongs to the $[1^{n}]$-stratum.
Other attempts at using smaller groups $H\subset SU(n)$, such as
maximal tori, may still fail for the irreducible stratum. Given this
difficulty, we resort to an argument analogous to the one of Theorem
\ref{thm:SL-2strata}, for strata associated with partitions of length
more than one. \end{rem}
\begin{lem}
\label{lem:action-on-strata}Let $[k]\in\mathcal{P}_{n}$ be a partition
with length $l>1$. Then, the action of $C_{r,n}$ on the cohomology
of $SU_{r}^{[k]}$ is trivial. \end{lem}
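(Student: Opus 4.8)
The plan is to mimic the structure of the proof of Theorem~\ref{thm:SL-2strata}, exploiting the fact that a stratum of length $l>1$ is built from irreducible representations of smaller rank, where the central action can be interpolated by a connected group acting on \emph{strictly smaller} factors. Concretely, fix a sequence $\mathbf{n}=(n_1,\dots,n_s)$ with $s=l>1$ and $p(\mathbf{n})=[k]$, and recall the compact analogue of the cartesian product
\[
\mathsf{U}_r^{\mathbf{n}}:=\times_{i=1}^{s}U_{r,n_i}^{*},\qquad
S\mathsf{U}_r^{\mathbf{n}}:=\{(\rho_1,\dots,\rho_s)\in\mathsf{U}_r^{\mathbf{n}}\,|\,\textstyle\prod_i\det\rho_i=1\}.
\]
As in the $GL_n$ case there are isomorphisms $\mathsf{U}_r^{\mathbf{n}}/S_{\mathbf{n}}\cong U_r^{[k]}$ and $S\mathsf{U}_r^{\mathbf{n}}/S_{\mathbf{n}}\cong SU_r^{[k]}$, and the action of $C_{r,n}$ on $SU_r^{[k]}$ lifts to the diagonal (scalar) action of $C_{r,n}$ on $S\mathsf{U}_r^{\mathbf{n}}$, where $C_{r,n}\cong(\mathbb{Z}_n)^r$ acts by multiplying each factor $\rho_i$ by the same central scalar.

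The key point is that, unlike the whole irreducible stratum, this diagonal central action on the product $S\mathsf{U}_r^{\mathbf{n}}$ \emph{can} be interpolated by a connected group. First I would check that the action descends, so that triviality of the $C_{r,n}$-action on $H^*(S\mathsf{U}_r^{\mathbf{n}})$ forces triviality on the quotient $H^*(SU_r^{[k]})$ — this is immediate because the induced maps on the quotient are the restrictions of the induced maps upstairs, and the quotient map is $S_{\mathbf{n}}$-equivariant with trivial action of $C_{r,n}$ on the symmetric-group labels. Then I would produce, for each $\zeta\in C_{r,n}$, an explicit path in a connected subgroup that realizes multiplication by $\zeta$ as an element homotopic to the identity while preserving $S\mathsf{U}_r^{\mathbf{n}}$. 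The natural candidate is the connected group $H$ (the compact analogue of the torus kernel from Theorem~\ref{thm:SL-2strata})
\[
H=\{(\sigma_1,\dots,\sigma_s)\in(U_{r,1})^{s}\,|\,\sigma_1^{m_1}\cdots\sigma_s^{m_s}=1\},
\]
with $m_i=n_i/\gcd(n_1,\dots,n_s)$, which acts on $S\mathsf{U}_r^{\mathbf{n}}$ preserving the determinant-one condition. Since $s>1$ and the $m_i$ are coprime, $H$ is connected, and the diagonal scalars $C_{r,n}$ sit inside $H$ (each central $n$-th root of unity, applied simultaneously to all factors, satisfies the defining relation because $\sum m_i \equiv 0$ appropriately under the determinant constraint). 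Connectedness of $H$ then lets me apply the second statement of Lemma~\ref{lem:interpolating-homotopy} directly.

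The main obstacle I expect is verifying that the diagonal copy of $C_{r,n}$ genuinely lands inside the connected interpolating group $H$ while simultaneously preserving $S\mathsf{U}_r^{\mathbf{n}}$ — i.e., that multiplying every factor by a common central root of unity is consistent with both the determinant-one constraint and the defining relation $\sigma_1^{m_1}\cdots\sigma_s^{m_s}=1$. This is precisely the arithmetic that fails for $s=1$ (where $H$ degenerates and is not available), which is why the irreducible stratum must be handled separately in Section~5; so the length hypothesis $l>1$ is genuinely used here. I would confirm this compatibility by a direct computation with the exponents $m_i$ and the total rank, and then conclude by invoking Lemma~\ref{lem:interpolating-homotopy}: the connected group $H$ interpolates the finite action, hence $C_{r,n}$ acts trivially on $H^*(S\mathsf{U}_r^{\mathbf{n}})$ and therefore on $H^*(SU_r^{[k]})$.
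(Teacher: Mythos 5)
Your reduction to the determinant--one product of irreducible pieces, the lift of the $C_{r,n}$-action to the diagonal scalar action, and the descent through the commuting $S_{\mathbf{n}}$-action all match the paper's proof (your $S\mathsf{U}_r^{\mathbf{n}}$ is the paper's $\mathcal{S}^{\mathbf{n}}$), and your argument is complete in the case $d:=\gcd(n_1,\dots,n_s)=1$: then $H$ is connected, contains the diagonal copy of $C_{r,n}$, and the second statement of Lemma~\ref{lem:interpolating-homotopy} finishes the proof --- indeed this is a cleaner packaging of the paper's explicit homotopy. However, the step you yourself flagged as the main obstacle genuinely fails whenever $d>1$, and the ``direct computation with the exponents $m_i$'' refutes rather than confirms the containment. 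A diagonal element $(\zeta,\dots,\zeta)$, with $\zeta\in\hom(F_r,\mathbb{Z}_n)$, lies in $H$ if and only if
\[
\zeta^{m_1+\cdots+m_s}=\zeta^{n/d}=1,
\]
and since $\zeta$ runs over all $n$-th roots of unity on each generator, this holds for every element of $C_{r,n}$ only when $d=1$. The smallest counterexample is $[k]=[2^{2}]\in\mathcal{P}_4$: there $\mathbf{n}=(2,2)$, $m_1=m_2=1$, $H=\{(\sigma,\sigma^{-1})\,:\,\sigma\in U_{r,1}\}$, and the diagonal element with $\zeta(\gamma_1)=i$ is not in $H$, since $i\cdot i=-1\neq 1$. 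So the relevant dichotomy is not $s=1$ versus $s>1$, as you suggest, but $d=1$ versus $d>1$.

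Moreover, this gap cannot be closed by enlarging $H$ within the same framework. The full group of scalar tuples preserving the condition $\prod_i\det\rho_i=1$ is $K=\{(\sigma_1,\dots,\sigma_s)\in(U_{r,1})^{s}\,:\,\prod_i\sigma_i^{n_i}=1\}$; its identity component is exactly $H$ (the exponent vector $(n_1,\dots,n_s)$ is $d$ times a primitive vector, so $\pi_0(K)\cong(\mathbb{Z}_d)^{r}$), and the composite $C_{r,n}\to\pi_0(K)$, $\zeta\mapsto\zeta^{n/d}$, is surjective. Hence for $d>1$ the diagonal $C_{r,n}$ meets every connected component of $K$ and is contained in no connected subgroup of it, so the second statement of Lemma~\ref{lem:interpolating-homotopy} is structurally unavailable. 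These strata are not marginal: they include all $[d^{n/d}]$, precisely the ones carrying the nonzero Euler characteristics in Proposition~\ref{prop:euler}. This is why the paper does not argue via containment in a connected group: it constructs an explicit homotopy $(t;\rho_1,\rho_2,\dots,\rho_s)\mapsto(\sigma_t\rho_1,\tilde{\sigma}_t\rho_2,\dots,\tilde{\sigma}_t\rho_s)$, rotating the first factor backward by $tq/n$ and the remaining ones forward by $tp/n$ (which stays inside the determinant--one locus), reaching at $t=1$ the diagonal action of $e^{2\pi i p/n}$ with $p=n_1$, and then treats the case $d>1$ by a separate, re-parametrized homotopy with $t\in[0,\tfrac{1}{d}]$. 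To complete your proof you would need to add this (or an equivalent) mechanism for $d>1$; as written, your argument establishes the lemma only for partitions whose parts are globally coprime.
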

\begin{proof}
We first consider cartesian products of irreducible representation
spaces satisfying a determinant condition. Let $s>1$, $\mathbf{n}:=(n_{1},\cdots,n_{s})\in\mathbb{N}^{s}$,
with $n=n_{1}+n_{2}+\cdots+n_{s}$ and denote by 
\[
\mathcal{S}^{\mathbf{n}}:=\left\{ \rho=(\rho_{1},\cdots,\rho_{s})\in\times_{j=1}^{s}U_{r,n_{j}}^{*}\ |\ {\textstyle \prod_{j=1}^{s}}\det\rho_{j}=1\right\} 
\]
which is a smooth manifold. Note that, in contrast to partitions,
$\mathbf{n}:=(n_{1},\cdots,n_{s})\in\mathbb{N}^{s}$ is an ordered
$s$-tuple of elements of $\mathbb{N}$. There is an action of $C_{r,n}$
on $\mathcal{S}^{\mathbf{n}}$ given by: 
\begin{equation}
\sigma\cdot\rho=\sigma\cdot(\rho_{1},\cdots,\rho_{s}):=(\sigma\rho_{1},\cdots,\sigma\rho_{s}),\quad\quad\sigma\in\hom(F_{r},\mathbb{Z}_{n})=C_{r,n},\ \rho_{j}\in U_{r,n_{j}}^{*}.\label{eq:gamma-1}
\end{equation}
It is easy to check the product of determinants condition, so that
$\sigma\cdot\rho\in\mathcal{S}^{\mathbf{n}}$.

Now, assume that the greatest common divisor of all $n_{1},\cdots,n_{s}$
is 1. Then, there is at least one $n_{j}$ prime with $n$, and without
loss of generality, we can take $p:=n_{1}$ prime with $n$. Denote
for simplicity, $q=n-p=\sum_{i=2}^{s}n_{i}>0$. Define the following
elements of $U_{r,1}$: 
\[
\sigma_{t}=(e^{-2\pi it\frac{q}{n}},1,\cdots,1),\;\;\;\;\;\ \tilde{\sigma_{t}}=(e^{2\pi it\frac{p}{n}},1,\cdots,1)
\]
parametrised by $t\in[0,1]$, and where each factor corresponds to
a generator of $F_{r}$. Consider the following homotopy: 
\begin{eqnarray*}
[0,1]\times\mathcal{S}^{\mathbf{n}} & \to & \mathcal{S}^{\mathbf{n}}\\
(t;\rho_{1},\rho_{2}\cdots,\rho_{s}) & \mapsto & (\sigma_{t}\rho_{1},\,\tilde{\sigma_{t}}\rho_{2},\cdots,\,\tilde{\sigma_{t}}\rho_{s}).
\end{eqnarray*}
This is well defined since the product of determinants on the right
side, for the first generator $\gamma_{1}$, is: 
\[
\left(e^{-2\pi it\frac{q}{n}}\right)^{p}\left(e^{2\pi it\frac{p}{n}}\right)^{q}{\textstyle \prod_{j=1}^{s}}\det\rho_{j}(\gamma)=1,
\]
(the representation on the other generators does not change under
this homotopy). Then, for $t=0$ the map $\mathcal{S}^{\mathbf{n}}\to\mathcal{S}^{\mathbf{n}}$
is the identity, and for $t=1$ the map is identified with the action,
on the first generator $\gamma_{1}$, of multiplication by the scalar
$e^{2\pi i\frac{p}{n}}$: 
\[
e^{-2\pi i\frac{q}{n}}=e^{-2\pi i\frac{q-n}{n}}=e^{2\pi i\frac{p}{n}},
\]
which is a primitive $n^{th}$ root of unity. Thus, we have found
a homotopy between the identity and $\sigma^{*}$ for the element
$\sigma=(e^{2\pi i\frac{p}{n}},1,\cdots,1)\in C_{r,n}$ (c.f. Lemma
\ref{lem:interpolating-homotopy}). Since all elements of $C_{r,n}$
are obtained as compositions of elements $\sigma$ of this form (with
non-trivial elements on the other entries), we obtain the necessary
homotopies to apply Lemma \ref{lem:interpolating-homotopy}, and finish
the proof, in this case. If the greatest common divisor of all $n_{1},\cdots,n_{s}$
is $d>1$, then we can use the same map but with $t\in[0,\frac{1}{d}]$
(since there is some $p=n_{j}$ that verifies now $(\frac{p}{d},n)=1$).

So, the action of $\mathbb{Z}_{n}^{r}$ is trivial on the cohomology
of $\mathcal{S}^{\mathbf{n}}$. Finally, let $[k]$ be the partition
determined by the tuple $\mathbf{n}=(n_{1},\cdots,n_{s})\in\mathbb{N}^{s}$.
Observe that the length of $[k]$ is $s$. Then, we note that 
\[
SU_{r}^{[k]}=\mathcal{S}^{\mathbf{n}}/S_{[k]},
\]
where $S_{[k]}=\times_{j=1}^{n}S_{k_{j}}\subset S_{n}$ acts by permutation
of the blocks of equal size. Since the action of $C_{r,n}$ commutes
with the one of $S_{[k]}$, we get 
\[
H^{*}(SU_{r}^{[k]})^{C_{r,n}}=\left(H^{*}(\mathcal{S}^{\mathbf{n}})^{S_{[k]}}\right)^{C_{r,n}}=\left(H^{*}(\mathcal{S}^{\mathbf{n}})^{C_{r,n}}\right)^{S_{[k]}}=H^{*}(\mathcal{S}^{\mathbf{n}})^{S_{[k]}}=H^{*}(SU_{r}^{[k]}),
\]
as wanted. 
\end{proof}
For the next Proposition, we let $X$ be a compact Hausdorff topological
space acted by finite group $F$, and we have a closed $F$-invariant
subspace $Y\subset X$, with complement $U:=X\setminus Y$ . Since
we need compactly supported cohomology for the open case, and it coincides
with usual cohomology for $X$ and for $Y$, below we can use $H_{c}^{*}$
uniformly. 
\begin{prop}
\label{prop:snake-cohomology} If $F$ acts trivially on the compactly
supported cohomology of two of the spaces $X$, $Y$ and $U$, then
it acts trivially on the cohomology of the third one. \end{prop}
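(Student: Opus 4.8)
The plan is to deduce the statement from the long exact sequence in compactly supported cohomology attached to the decomposition $X = Y \sqcup U$, with $Y\subset X$ closed and $U$ open. For the open immersion $U\hookrightarrow X$ with closed complement $Y$, this sequence reads
\[
\cdots \to H_c^k(U)\xrightarrow{\alpha}H_c^k(X)\xrightarrow{\beta}H_c^k(Y)\xrightarrow{\partial}H_c^{k+1}(U)\to\cdots,
\]
where $\alpha$ is extension by zero and $\beta$ is restriction; it is the cohomological refinement of the additivity $E(X)=E(Y)+E(U)$ used throughout. Since the $F$-action preserves $Y$ (hence $U$) and this sequence is natural in the decomposition, each $g\in F$ induces an automorphism of the whole sequence, so $\alpha,\beta,\partial$ are all $F$-equivariant. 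As the $H_c^k$ are finite-dimensional over a field of characteristic zero, I would work in the abelian category of finite-dimensional $F$-representations.

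First I would isolate the purely algebraic core. Naively, a ``two out of three'' principle for triviality of a group action along an exact sequence is \emph{false}: for a short exact sequence $0\to A\to B\to C\to 0$ of $F$-modules, triviality on the sub $A$ and the quotient $C$ does not force triviality on $B$, the obstruction being unipotent actions. The observation that rescues the statement is that $F$ is \emph{finite} and the coefficients are of \emph{characteristic zero}. Indeed, if $F$ acts trivially on $A$ and on $C$, then for each $g\in F$ one has $(g-1)B\subseteq A$ and $(g-1)A=0$, whence $(g-1)^2=0$ on $B$; thus $g$ is unipotent, and being of finite order $m$ it satisfies $g^m=1+m(g-1)=1$, forcing $g-1=0$ on $B$ since $m$ is invertible. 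This lemma---triviality on sub and quotient implies triviality on the middle, for finite groups in characteristic zero---is the main point, and I expect it to be the only genuine obstacle; the rest is bookkeeping through the long exact sequence.

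With the lemma in hand I would treat the three cases uniformly, reading off the relevant three-term exact subsequence and packaging it as a short exact sequence whose sub and quotient lie in the two ``trivial'' spaces. If $F$ acts trivially on $X$ and $Y$, I consider $H_c^{k-1}(Y)\xrightarrow{\partial}H_c^k(U)\xrightarrow{\alpha}H_c^k(X)$: here $\ker\alpha=\im\partial$ is a quotient of $H_c^{k-1}(Y)$, while $H_c^k(U)/\ker\alpha\cong\im\alpha$ is a subspace of $H_c^k(X)$, so both are trivial and the lemma yields triviality on $H_c^k(U)$. If $F$ acts trivially on $Y$ and $U$, the same argument applied to $H_c^k(U)\xrightarrow{\alpha}H_c^k(X)\xrightarrow{\beta}H_c^k(Y)$, with $\ker\beta=\im\alpha$ a quotient of $H_c^k(U)$ and $\im\beta\subseteq H_c^k(Y)$, gives triviality on $H_c^k(X)$. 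Finally, if $F$ acts trivially on $X$ and $U$, applying it to $H_c^k(X)\xrightarrow{\beta}H_c^k(Y)\xrightarrow{\partial}H_c^{k+1}(U)$, with $\ker\partial=\im\beta$ a quotient of $H_c^k(X)$ and $\im\partial\subseteq H_c^{k+1}(U)$, gives triviality on $H_c^k(Y)$. Since each conclusion holds in every degree $k$, this settles all three cases.
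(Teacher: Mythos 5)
Your proof is correct, and it rests on the same geometric input as the paper's own argument: the long exact sequence in compactly supported cohomology attached to the decomposition $X=U\sqcup Y$, together with the $F$-equivariance of all its maps. Where you genuinely differ is in the algebraic engine. The paper forms the ladder whose vertical maps are $g_{X}^{*},g_{Y}^{*},g_{U}^{*}$ and invokes the 5-lemma: two of the three verticals are the identity (in particular isomorphisms), hence the third is an isomorphism, and from this it concludes that the action is trivial. You instead replace the 5-lemma by the representation-theoretic lemma that, for a \emph{finite} group acting on vector spaces in \emph{characteristic zero}, triviality on a submodule and on the corresponding quotient forces triviality in the middle: $(g-1)^{2}=0$ together with $g^{m}=1+m(g-1)=1$ kills $g-1$. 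This is sharper at exactly the point where the paper's write-up is too quick: the 5-lemma only yields that the third vertical map is an \emph{isomorphism} in each degree, and an automorphism of finite order need not be the identity (consider $\mathbb{Z}_{2}$ acting by $-1$ on a line), so the passage from ``isomorphism for every $g$'' to ``trivial action'' requires precisely the kind of argument you supply --- either your unipotence lemma, or alternatively a 5-lemma applied to the ladder of inclusions $H_{c}^{*}(\cdot)^{F}\hookrightarrow H_{c}^{*}(\cdot)$ comparing the sequence of $F$-invariants (exact, since averaging over $F$ is available in characteristic zero) with the original sequence. So your proposal is not merely a variant: it isolates where finiteness of $F$ and characteristic zero enter, neither of which is visible in the paper's proof, and it closes the gap between ``isomorphism'' and ``identity'' that the paper's conclusion implicitly assumes.
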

\begin{proof}
This follows from the 5-lemma applied to the long exact sequence for
cohomology with compact support associated to the decomposition $X=U\sqcup Y$.
More precisely, let $g\in F$, and denote by $g_{Z}^{*}$ the associated
morphisms in the cohomology for $Z=X,Y$ or $U$. Then, we can form
the ladder: 
\[
\begin{array}{ccccccccccccc}
\cdots & \to & H_{c}^{k-1}(X) & \to & H_{c}^{k-1}(Y) & \to & H_{c}^{k}(U) & \to & H_{c}^{k}(X) & \to & H_{c}^{k}(Y) & \to & \cdots\\
 &  & \downarrow g_{X}^{*} &  & \downarrow g_{Y}^{*} &  & \downarrow g_{U}^{*} &  & \downarrow g_{X}^{*} &  & \downarrow g_{Y}^{*}\\
\cdots & \to & H_{c}^{k-1}(X) & \to & H_{c}^{k-1}(Y) & \to & H_{c}^{k}(U) & \to & H_{c}^{k}(X) & \to & H_{c}^{k}(Y) & \to & \cdots.
\end{array}
\]
By hypothesis, two of $g_{X}^{*},g_{Y}^{*}$ and $g_{U}^{*}$ are
isomorphisms. Then, by the 5-lemma, the third map is also an isomorphism.
Since $g_{U}^{*}$ is an isomorphism for every $g\in F$, the action
of $F$ on $H_{c}^{*}(U)$ is trivial. The same argument holds for
the other 2 spaces $X,Y$.
\end{proof}
The stratification $SU_{r,n}=\bigsqcup_{[k]\in\mathcal{P}_{n}}SU_{r}^{[k]}$
has the nice property that the closure of every stratum is a union
of other strata. In fact, for every $a,b\in\mathbb{N}$, the direct
sum of irreducible representations of sizes $a$ and $b$ is in the
closure of the irreducible ones of size $a+b$. This means that the
closure of $SU_{r}^{[k]}$ is the disjoint union of all strata $SU_{r}^{[l]}$
where $[l]$ is obtained by any subdivision of $[k]\in\mathcal{P}_{n}$.
For example, with $n=5$, $\overline{X_{[23]}}=X_{[23]}\sqcup X_{[1^{2}3]}\sqcup X_{[12^{2}]}\sqcup X_{[1^{3}2]}\sqcup X_{[1^{5}]}$,
where we used the abbreviated notation $X_{[k]}:=SU_{r}^{[k]}$.

Now, fix $p\in\{1,\cdots,n\}$ and consider the closed subset of $SU_{r,n}$
defined by 
\[
Y_{p}:=\bigcup_{|[k]|=p}\overline{SU_{r}^{[k]}}=\bigcup_{|[k]|\geq p}SU_{r}^{[k]},
\]
where $|[k]|$ is the length of $[k]\in\mathcal{P}_{n}$. Since each
$SU_{r}^{[k]}$ is the only open stratum in $\overline{SU_{r}^{[k]}}$,
it is easy to see that: $Y_{p}\setminus Y_{p+1}=\bigsqcup_{|[k]|=p}SU_{r}^{[k]},$
and this last union is disjoint. 
\begin{lem}
\label{lem:induction}Fix $n$ and let $2\leq p\leq n-1$. If $C_{r,n}$
acts trivially on the cohomology of $Y_{p+1}$, then the same holds
for $Y_{p}$. Thus, $C_{r,n}$ acts trivially on the cohomology of
$Y_{2}$.\end{lem}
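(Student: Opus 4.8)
The plan is to realize each $Y_p$ as the kind of open/closed decomposition handled by Proposition~\ref{prop:snake-cohomology}, with the open piece controlled by Lemma~\ref{lem:action-on-strata}, and then to iterate the resulting step downwards in $p$.

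First I would fix the decomposition. Since $Y_{p+1}\subset Y_p$ is closed and $C_{r,n}$-invariant (the central action multiplies each $\rho(\gamma_i)$ by a scalar root of unity, hence preserves polystable type and therefore every union of closures of strata), its complement in $Y_p$ is exactly the open piece $U:=Y_p\setminus Y_{p+1}=\bigsqcup_{|[k]|=p}SU_r^{[k]}$. Because the closure of a stratum only acquires strata of \emph{strictly greater} length, each length-$p$ stratum meets $U$ in a relatively clopen set, so $U$ is a genuine topological disjoint union of the length-$p$ strata. Consequently $H_c^*(U)=\bigoplus_{|[k]|=p}H_c^*(SU_r^{[k]})$ as $C_{r,n}$-modules, the action preserving each summand.

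Next I would establish triviality on the open piece. As $p\geq 2$, every partition with $|[k]|=p$ has length $p>1$, so Lemma~\ref{lem:action-on-strata} makes the $C_{r,n}$-action on each $H_c^*(SU_r^{[k]})$ trivial; by the displayed splitting, the action on $H_c^*(U)$ is trivial as well. Now the spaces $Y_p$ and $Y_{p+1}$ are compact, being closed in $SU(n)^r$, and $Y_{p+1}$ is a closed $C_{r,n}$-invariant subspace with complement $U$, so Proposition~\ref{prop:snake-cohomology} applies to $Y_p=U\sqcup Y_{p+1}$. Since the action is trivial on $H_c^*(U)$ (just shown) and on $H_c^*(Y_{p+1})$ (hypothesis), it is trivial on the third space $H_c^*(Y_p)$, which (as $Y_p$ is compact) is the ordinary $H^*(Y_p)$. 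This proves the inductive step. For the concluding assertion I would run downward induction on $p$: the base case $Y_n=SU_r^{[1^n]}$ is a single stratum of length $n>1$, handled directly by Lemma~\ref{lem:action-on-strata}, and applying the step successively for $p=n-1,n-2,\ldots,2$ then yields triviality on $H^*(Y_2)$.

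The genuinely hard content does not reside in this lemma but upstream: the explicit homotopies trivializing the central action on each positive-length stratum are packaged in Lemma~\ref{lem:action-on-strata}, and the $5$-lemma bookkeeping in Proposition~\ref{prop:snake-cohomology}. Given these, the present statement is essentially formal, and the only points demanding care are the equivariant splitting of $H_c^*(U)$ over the strata---which rests on the central action preserving each stratum and on the clopen decomposition of $U$---and the identification of compactly supported with ordinary cohomology on the compact pieces $Y_p$, exactly as flagged in the discussion preceding Proposition~\ref{prop:snake-cohomology}.
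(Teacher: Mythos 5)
Your proposal is correct and follows essentially the same route as the paper: the same decomposition $Y_{p}=Y_{p+1}\sqcup\bigsqcup_{|[k]|=p}SU_{r}^{[k]}$, triviality on the open piece via Lemma \ref{lem:action-on-strata} (using $p\geq2$), the 5-lemma step via Proposition \ref{prop:snake-cohomology}, and downward finite induction from the base case $Y_{n}=SU_{r}^{[1^{n}]}$. The one detail the paper makes explicit that you pass over quickly is that Lemma \ref{lem:action-on-strata} is proved via homotopies and hence gives triviality on \emph{ordinary} cohomology of the non-compact strata $SU_{r}^{[k]}$; the paper transfers this to $H_{c}^{*}$ by noting each stratum is a finite quotient of a smooth manifold and so satisfies Poincar\'e duality.
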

\begin{proof}
Since $p\geq2$, by Lemma \ref{lem:action-on-strata}, $C_{r,n}$
acts trivially on 
\[
H_{c}^{*}(\bigsqcup_{|[k]|=p}SU_{r}^{[k]})=\bigoplus_{|[k]|=p}H_{c}^{*}(SU_{r}^{[k]}).
\]
We can use compactly supported cohomology since each $SU_{r}^{[k]}$
is a quotient of a smooth manifold by a finite group, so that it verifies
Poincaré duality. Assuming $C_{r,n}$ acts trivially on the cohomology
of $Y_{p+1}$, then the same holds for $Y_{p}=Y_{p+1}\sqcup(\sqcup_{|[k]|=p}SU_{r}^{[k]})$,
by Proposition \ref{prop:snake-cohomology}. Noting that $Y_{n}=SU_{r}^{[1^{n}]}$
(already a closed stratum) we have that $C_{r,n}$ acts trivially
on $H^{*}(Y_{n})$ again by Lemma \ref{lem:action-on-strata}. So,
the last sentence follows by finite induction. 
\end{proof}
Finally, Theorem \ref{thm:trivial-action} follows by another application
Proposition \ref{prop:snake-cohomology} and Corollary \ref{cor:action-on-total},
to the spaces $X=Y_{1}=SU_{r,n}$, the closed subset $Y=Y_{2}$ and
$U=X\setminus Y_{2}=SU_{r,n}^{*}=SU_{r}^{[n]}$.%

\subsection{Cohomology of quotient spaces}

Now, we consider the action of $PU(n)$ on all three representation
spaces by simultaneous conjugation, and define the compact irreducible
``character varieties'' $\mathcal{X}_{r}^{*}SU_{n}:=SU_{r,n}^{*}/PU(n)$
and $\mathcal{X}_{r}^{*}PU_{n}:=PU_{r,n}^{*}/PU(n)$.

We will need the notion of \emph{equivariant} \emph{cohomology}, which
we now recall. Let $X$ be a topological space endowed with the action
of a topological group $G$, and $p:E_{G}\to B_{G}$ be the universal
principal $G$-bundle, where $B_{G}=E_{G}/G$ is the classifying space
of $G$. The equivariant cohomology of $X$ is defined by (for more
details see, for example, \cite{Br}) 
\[
H_{G}^{*}(X):=H^{*}(X\times_{G}E_{G}),
\]
and note that, if $G$ acts freely on $X$, then 
\[
H_{G}^{*}(X)\cong H^{*}(X/G).
\]

\begin{prop}
\label{prop:SUPUisomorphisms} There are isomorphisms $H^{*}(SU_{r,n}^{*})\cong H^{*}(PU_{r,n}^{*})$,
$H^{*}(\mathcal{X}_{r}^{*}SU_{n})\cong H^{*}(\mathcal{X}_{r}^{*}PU_{n})$
and $H^{*}(\mathcal{X}_{r}SU_{n})\cong H^{*}(\mathcal{X}_{r}PU_{n})$.\end{prop}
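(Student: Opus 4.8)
The plan is to deduce all three isomorphisms from a single principle: identify the right-hand space in each case as the quotient of the left-hand space by the finite abelian group $C_{r,n}\cong(\mathbb{Z}_n)^r$, and then show that $C_{r,n}$ acts \emph{trivially} on the rational cohomology of the left-hand space. Once triviality is established, the transfer isomorphism $H^*(X/C_{r,n})\cong H^*(X)^{C_{r,n}}$ for a finite group acting over $\mathbb{Q}$ gives $H^*(\text{right})=H^*(\text{left})^{C_{r,n}}=H^*(\text{left})$. The three quotient identifications are $PU_{r,n}^*=SU_{r,n}^*/C_{r,n}$ (already recorded), and, since the central multiplication by $C_{r,n}$ commutes with $PU(n)$-conjugation, $\mathcal{X}_r^*PU_n=SU_{r,n}^*/(C_{r,n}\times PU(n))=\mathcal{X}_r^*SU_n/C_{r,n}$ and likewise $\mathcal{X}_rPU_n=\mathcal{X}_rSU_n/C_{r,n}$.

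For the first isomorphism, Theorem \ref{thm:trivial-action} already gives triviality of $C_{r,n}$ on $H_c^*(SU_{r,n}^*)$. Since $SU_{r,n}^*$ is a smooth oriented manifold and every element of $C_{r,n}$ acts by a diffeomorphism isotopic to the identity in the connected group $SU(n)^r$ (hence orientation-preserving), Poincar\'e duality intertwines the $C_{r,n}$-actions on $H_c^*$ and $H^*$, so the action on $H^*(SU_{r,n}^*)$ is also trivial, and the claim follows. For the second isomorphism, the decisive geometric input is that $PU(n)$ acts \emph{freely} on $SU_{r,n}^*$: by Schur's lemma the $SU(n)$-stabilizer of an irreducible representation is the center $\mathbb{Z}_n$, which dies in $PU(n)$. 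Hence $H^*(\mathcal{X}_r^*SU_n)\cong H^*_{PU(n)}(SU_{r,n}^*)$, and I would run the Leray--Serre spectral sequence of the Borel fibration $SU_{r,n}^*\to SU_{r,n}^*\times_{PU(n)}E_{PU(n)}\to B_{PU(n)}$. As $B_{PU(n)}$ is simply connected the local system is constant, so $E_2=H^*(B_{PU(n)})\otimes H^*(SU_{r,n}^*)$, on which $C_{r,n}$ acts trivially by the first isomorphism; since $C_{r,n}$ is finite and we work over $\mathbb{Q}$, triviality on the associated graded forces triviality on the abutment $H^*(\mathcal{X}_r^*SU_n)$.

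The third isomorphism is the main obstacle, and I expect to spend the real work there. The trouble is that $PU(n)$ does \emph{not} act freely on the full $SU_{r,n}$ (reducible classes have positive-dimensional stabilizers), so $H^*_{PU(n)}(SU_{r,n})$ no longer computes $H^*(\mathcal{X}_rSU_n)$ and the spectral-sequence shortcut of the second isomorphism is unavailable; moreover the easy triviality of Corollary \ref{cor:action-on-total}, which used the ambient connected group $SU(n)^r$, is lost after passing to the conjugation quotient. My plan is therefore to reprise the stratified induction of Theorem \ref{thm:trivial-action} one level down. The polystable-type stratification descends to $\mathcal{X}_rSU_n=\bigsqcup_{[k]}SU_r^{[k]}/PU(n)$ with the same closure relations, and $C_{r,n}$ preserves each stratum $\mathcal{X}_r^{[k]}SU_n:=SU_r^{[k]}/PU(n)$. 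For a partition of length $l>1$, I would descend the explicit central-multiplication homotopy of Lemma \ref{lem:action-on-strata}: being built from scalars, it commutes with $PU(n)$-conjugation, hence descends to the conjugation quotient of $\mathcal{S}^{\mathbf{n}}$ and, via Lemma \ref{lem:interpolating-homotopy}, shows $C_{r,n}$ acts trivially there; taking $S_{[k]}$-invariants (which commute with $C_{r,n}$) then gives triviality on $H^*(\mathcal{X}_r^{[k]}SU_n)$.

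With the open irreducible stratum handled by the second isomorphism and all strata of length $>1$ handled as above, I would patch upward exactly as in Lemma \ref{lem:induction}: set $\mathcal{Y}_p:=\bigcup_{|[k]|\geq p}\mathcal{X}_r^{[k]}SU_n$, note that each stratum is an orbifold satisfying Poincar\'e duality so that $H_c^*$ may be used uniformly, and apply Proposition \ref{prop:snake-cohomology} to the open/closed decompositions $\mathcal{Y}_p=\big(\bigsqcup_{|[k]|=p}\mathcal{X}_r^{[k]}SU_n\big)\sqcup\mathcal{Y}_{p+1}$, descending to $\mathcal{Y}_2$ and finally combining the closed set $\mathcal{Y}_2$ with the open irreducible stratum to obtain triviality on all of $H^*(\mathcal{X}_rSU_n)$. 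Because $\mathcal{X}_rPU_n=\mathcal{X}_rSU_n/C_{r,n}$, this yields the last isomorphism. The one delicate point to watch is precisely this loss of freeness in the third case: one cannot conclude in a single stroke and must re-run the stratified induction, the descent of the homotopies working only because $C_{r,n}$ is central.
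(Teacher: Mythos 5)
Your proposal is correct, and its skeleton is exactly the paper's: reduce all three isomorphisms to triviality of the $C_{r,n}$-action plus the transfer isomorphism $H^{*}(X/C_{r,n})\cong H^{*}(X)^{C_{r,n}}$, use freeness of the $PU(n)$-action together with the Borel construction for the irreducible quotient, and re-run the stratified induction (Lemma \ref{lem:induction} with Proposition \ref{prop:snake-cohomology}) to handle the loss of freeness on the full character variety. You deviate in two sub-steps, both soundly. For the second isomorphism the paper does not prove triviality on $H^{*}(\mathcal{X}_{r}^{*}SU_{n})$ first: following \cite{CFLO}, it observes that $\pi\colon SU_{r,n}^{*}\to PU_{r,n}^{*}$ is a $PU(n)$-equivariant map inducing an isomorphism on cohomology, hence $H_{PU(n)}^{*}(SU_{r,n}^{*})\cong H_{PU(n)}^{*}(PU_{r,n}^{*})$, and freeness converts this into the isomorphism of quotients, with triviality of the $C_{r,n}$-action on $H^{*}(\mathcal{X}_{r}^{*}SU_{n})$ recorded afterwards as a consequence; your Leray--Serre argument on the Borel fibration (trivial action on $E_{2}$, then semisimplicity of $\mathbb{Q}[C_{r,n}]$-modules to pass from the associated graded to the abutment) is the same mechanism run in the opposite direction. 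For the third isomorphism, where you descend the scalar homotopies of Lemma \ref{lem:action-on-strata} to the conjugation quotients (legitimate, since scalar multiplication commutes with conjugation, so the homotopies are defined on the quotient strata), the paper instead writes each stratum $\mathcal{X}_{r}^{[k]}SU_{n}$ as a finite $S_{[k]}$-quotient of a product of irreducible pieces of smaller rank and obtains triviality on the stratum by taking $S_{[k]}$-invariants of the K\"unneth decomposition, the factors being handled by the already-established irreducible case. Your route is more self-contained, reusing the homotopies verbatim rather than relying on that product decomposition of quotient strata (which the paper states somewhat tersely); the paper's route needs no new geometric input beyond the second isomorphism. The final patching via the closed sets $X_{p}$ (your $\mathcal{Y}_{p}$), the Poincar\'e duality remark justifying the uniform use of $H_{c}^{*}$, and the application of Proposition \ref{prop:snake-cohomology} are identical in both.
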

\begin{proof}
The action of $C_{r,n}$ on the compactly supported cohomology of
$SU_{r,n}^{*}=\hom^{irr}(F_{r},SU_{n})$ is trivial, by Theorem \ref{thm:trivial-action},
so: 
\[
H_{c}^{*}(SU_{r,n}^{*})=H_{c}^{*}(SU_{r,n}^{*})^{C_{r,n}}=H_{c}^{*}(SU_{r,n}^{*}/C_{r,n})=H_{c}^{*}(PU_{r,n}^{*}).
\]
Since $SU_{r,n}^{*}$ and $PU_{r,n}^{*}$ are either smooth manifolds
or orbifolds (and are orientable), their cohomologies satisfy Poincaré
duality, so the same isomorphism holds for the usual cohomologies.
Now, as in \cite[p.12]{CFLO}, because the quotient map $\pi:SU_{r,n}^{*}\to SU_{r,n}^{*}/C_{r,n}=PU_{r,n}^{*}$
induces an isomorphism in cohomology, and $\pi$ is equivariant with
respect to the conjugation $PU(n)$ action, we have that their equivariant
cohomologies are the same: 
\[
H_{PU(n)}^{*}(SU_{r,n}^{*})\cong H_{PU(n)}^{*}(PU_{r,n}^{*}).
\]
Since the $PU(n)$ action is free on the irreducible representation
spaces, we get 
\[
H^{*}(\mathcal{X}_{r}^{*}SU_{n})\cong H_{PU(n)}^{*}(SU_{r,n}^{*})\cong H_{PU(n)}^{*}(PU_{r,n}^{*})\cong H^{*}(\mathcal{X}_{r}^{*}PU_{n}),
\]
completing the proof of the second isomorphism. This also means that
the action of $C_{r,n}$ on $H^{*}(\mathcal{X}_{r}^{*}SU_{n})$ is
trivial.

Now, for $[k]=[1^{k_{1}}\cdots n^{k_{n}}]\in\mathcal{P}_{n}$, we
can write every stratum as a finite quotient $\mathcal{X}_{r}^{[k]}SU_{n}=(\times_{j=1}^{n}\mathcal{X}_{r}^{*}SU_{k_{j}})/S_{[k]}$
(with $S_{[k]}=S_{k_{1}}\times\cdots\times S_{k_{n}}$), and since
there is a trivial action of $C_{r,n}$ on $H^{*}(\times_{j=1}^{n}\mathcal{X}_{r}^{*}SU_{k_{j}})=\otimes_{j=1}^{n}H^{*}(\mathcal{X}_{r}^{*}SU_{k_{j}})$,
the same also happens for the subspace fixed by $S_{[k]}$: 
\[
H^{*}(\mathcal{X}_{r}^{[k]}SU_{n})=\left[\otimes_{j=1}^{n}H^{*}(\mathcal{X}_{r}^{*}SU_{k_{j}})\right]^{S_{[k]}}.
\]
Now, the stratification $\mathcal{X}_{r}SU_{n}=\bigsqcup_{[k]\in\mathcal{P}_{n}}\mathcal{X}_{r}^{[k]}SU_{n}$
has also the property that the closure of a stratum is a union of
strata. For $p\in\{2,\cdots,n\}$ consider the closed subset: 
\[
X_{p}:=\bigcup_{|[k]|=p}\overline{\mathcal{X}_{r}^{[k]}SU_{n}}=\bigcup_{|[k]|\geq p}\mathcal{X}_{r}^{[k]}SU_{n},
\]
and note that $X_{p}\setminus X_{p+1}=\bigsqcup_{|[k]|=p}\mathcal{X}_{r}^{[k]}SU_{n}$.
Therefore, Lemma \ref{lem:induction} applies to $X_{p}$ in place
of $Y_{p}$ (here, note that $X_{n}=\mathcal{X}_{r}^{[1^{n}]}SU_{n}$
is already a closed stratum), to show that the cohomology of $X_{2}$
has a trivial $C_{r,n}$ action. Finally, the third isomorphism is
shown by applying Proposition \ref{prop:snake-cohomology} to the
triple: $X:=X_{1}=\mathcal{X}_{r}SU_{n}$, $Y:=X_{2}$ and $U:=X_{1}\setminus X_{2}=\mathcal{X}_{r}^{*}SU_{n}=\mathcal{X}_{r}^{[n]}SU_{n}$.
The action of $C_{r,n}$ on the cohomology of $U$ and $Y$ being
trivial, the same holds for $X$. 
\end{proof}
We are finally ready for the completion of the main result. 
\begin{thm}
\label{thm:equality-irr-PGL-SL} There are isomorphisms of mixed Hodge
structures $H^{*}(\mathcal{X}_{r}SL_{n})\cong H^{*}(\mathcal{X}_{r}PGL_{n})$
and $H_{c}^{*}(\mathcal{X}_{r}^{irr}SL_{n})\cong H_{c}^{*}(\mathcal{X}_{r}^{irr}PGL_{n})$.
In particular, the $E$-polynomials of $\mathcal{X}_{r}^{irr}SL_{n}$
and of $\mathcal{X}_{r}^{irr}PGL_{n}$ coincide.\end{thm}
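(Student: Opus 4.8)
The plan is to deduce both isomorphisms from a single fact: the central $\mathbb{Z}_{n}^{r}$-action acts \emph{trivially} on the relevant cohomology of the $SL_{n}$-side. The starting point is that $\mathcal{X}_{r}PGL_{n}=\mathcal{X}_{r}SL_{n}/\mathbb{Z}_{n}^{r}$ and $\mathcal{X}_{r}^{irr}PGL_{n}=\mathcal{X}_{r}^{irr}SL_{n}/\mathbb{Z}_{n}^{r}$, where $\mathbb{Z}_{n}^{r}$ acts by multiplying each generator by a central root of unity. Because this action is by automorphisms of complex algebraic varieties, it acts on (compactly supported) cohomology through morphisms of mixed Hodge structures, and the finite-quotient identification $H^{*}(X/\mathbb{Z}_{n}^{r})\cong H^{*}(X)^{\mathbb{Z}_{n}^{r}}$ (and its $H_{c}^{*}$ analogue) is itself an isomorphism of mixed Hodge structures. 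Hence, once triviality of the action is known, passing to invariants immediately yields $H^{*}(\mathcal{X}_{r}SL_{n})\cong H^{*}(\mathcal{X}_{r}PGL_{n})$ and $H_{c}^{*}(\mathcal{X}_{r}^{irr}SL_{n})\cong H_{c}^{*}(\mathcal{X}_{r}^{irr}PGL_{n})$ as mixed Hodge structures.

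To obtain triviality, I would transport the compact computation of Proposition \ref{prop:SUPUisomorphisms} across the strong deformation retractions of \cite{FL1}: $\mathcal{X}_{r}SL_{n}$ retracts onto $\mathcal{X}_{r}SU_{n}$ and $\mathcal{X}_{r}^{irr}SL_{n}$ onto $\mathcal{X}_{r}^{*}SU_{n}$. The essential point is that these retractions are equivariant for the central action (the retraction flow is natural and commutes with central multiplication), so the induced cohomology isomorphisms are isomorphisms of $\mathbb{Z}_{n}^{r}$-representations. Proposition \ref{prop:SUPUisomorphisms} shows that $C_{r,n}\cong\mathbb{Z}_{n}^{r}$ acts trivially on $H^{*}(\mathcal{X}_{r}SU_{n})$ and on $H^{*}(\mathcal{X}_{r}^{*}SU_{n})$; equivariance then forces the action to be trivial on $H^{*}(\mathcal{X}_{r}SL_{n})$ and $H^{*}(\mathcal{X}_{r}^{irr}SL_{n})$. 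For the irreducible locus, which is smooth, Poincar\'e duality exchanges $H^{*}$ and $H_{c}^{*}$ equivariantly, so the action on $H_{c}^{*}(\mathcal{X}_{r}^{irr}SL_{n})$ is trivial as well. Feeding this back into the invariants argument of the first paragraph gives both claimed isomorphisms, and the coincidence of $E$-polynomials follows by taking the alternating sum $\sum_{k,p,q}(-1)^{k}h^{k,p,q}u^{p}v^{q}$ on each side.

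The main obstacle I anticipate is verifying the $\mathbb{Z}_{n}^{r}$-equivariance of the retraction from \cite{FL1} and its compatibility with the irreducible stratum, since it is precisely this equivariance that lets a purely topological triviality statement---proved on the compact spaces $SU_{r,n}^{*}$ and $\mathcal{X}_{r}^{*}SU_{n}$, which carry no mixed Hodge structure---be imported to the algebraic varieties where the mixed Hodge structure lives. A secondary subtlety is the bookkeeping between ordinary and compactly supported cohomology: the homotopy equivalence directly controls $H^{*}$, whereas the $E$-polynomial and the quotient formula are cleanest for $H_{c}^{*}$, and bridging the two relies on the smoothness of $\mathcal{X}_{r}^{irr}SL_{n}$ and on the orbifold Poincar\'e duality already used in Proposition \ref{prop:SUPUisomorphisms}.
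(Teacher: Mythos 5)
Your overall strategy---reduce both isomorphisms to triviality of the $C_{r,n}\cong\mathbb{Z}_{n}^{r}$-action on the $SL_{n}$-side cohomology, then pass to invariants, noting that for a finite algebraic quotient the identification $H_{c}^{*}(X/F)\cong H_{c}^{*}(X)^{F}$ respects mixed Hodge structures---is sound, and for the full character variety your argument is essentially the paper's. The gap is the step where you claim that the strong deformation retraction of \cite{FL1} restricts to a ($C_{r,n}$-equivariant) retraction of $\mathcal{X}_{r}^{irr}SL_{n}$ onto $\mathcal{X}_{r}^{*}SU_{n}$. No such restriction exists, even without any equivariance, because these two spaces are in general not homotopy equivalent. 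Already for $r=n=2$: in trace coordinates $\mathcal{X}_{2}SL_{2}\cong\mathbb{C}^{3}$, the reducible locus is the hypersurface $\{\kappa=0\}$ with $\kappa=x^{2}+y^{2}+z^{2}-xyz-4$, isomorphic to $(\mathbb{C}^{*})^{2}/\mathbb{Z}_{2}$, whose Euler characteristic is $\tfrac{1}{2}(0+4)=2$; hence $\chi(\mathcal{X}_{2}^{irr}SL_{2})=\chi(\mathbb{C}^{3})-2=-1$ (consistent with $e(\mathcal{X}_{2}^{irr}SL_{2})=x^{3}-x^{2}-1$ from Lemma \ref{lemma:Bn(x)}). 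On the compact side, $\mathcal{X}_{2}^{*}SU_{2}=\{(x,y,z)\in\mathbb{R}^{3}:\kappa<0\}$ is an open $3$-ball, contractible, with $\chi=+1$. Since the ordinary Euler characteristic is a homotopy invariant and, for complex varieties, agrees with the compactly supported one, no homotopy equivalence---let alone a strong deformation retraction---can exist between $\mathcal{X}_{2}^{irr}SL_{2}$ and $\mathcal{X}_{2}^{*}SU_{2}$. The underlying slip is that \cite{FL1} (and the paper) only guarantee that the retraction preserves the \emph{closed} reducible locus (``polar decomposition preserves reducible representations''); preservation of a closed invariant subset does not imply preservation of its open complement, and here it genuinely fails: the homotopy must move some irreducible classes into the reducible stratum.

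This is exactly the difficulty the paper's proof is built to circumvent, and it shows how to repair your argument. Keep your first part: the equivariant retractions that do exist give mixed Hodge structure isomorphisms $H^{*}(\mathcal{X}_{r}SL_{n})\cong H^{*}(\mathcal{X}_{r}PGL_{n})$ for the full spaces, and $H^{*}(Y)\cong H^{*}(\hat{Y})$ for the closed strictly polystable loci $Y=\bigsqcup_{|[k]|\ge2}\mathcal{X}_{r}^{[k]}SL_{n}$ and $\hat{Y}=\bigsqcup_{|[k]|\ge2}\mathcal{X}_{r}^{[k]}PGL_{n}$ (equivalently, triviality of the $C_{r,n}$-action there, by Theorem \ref{thm:SL-2strata} and the compact results). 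Then, instead of invoking Poincar\'e duality on the irreducible stratum, compare the long exact sequences of the open-closed decompositions $\mathcal{X}_{r}SL_{n}=\mathcal{X}_{r}^{irr}SL_{n}\sqcup Y$ and $\mathcal{X}_{r}PGL_{n}=\mathcal{X}_{r}^{irr}PGL_{n}\sqcup\hat{Y}$ (the algebraic counterpart of Proposition \ref{prop:snake-cohomology}): the 5-lemma yields an isomorphism $H_{c}^{*}(\mathcal{X}_{r}^{irr}SL_{n})\cong H_{c}^{*}(\mathcal{X}_{r}^{irr}PGL_{n})$ directly, and since the comparison is induced by the algebraic quotient map, it is an isomorphism of mixed Hodge structures, whence the equality of $E$-polynomials. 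Note that this route also delivers compactly supported cohomology---which is what the $E$-polynomial requires---without any appeal to smoothness or orientability of the irreducible stratum.
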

\begin{proof}
For every reductive group $G$, there is a strong deformation retraction
from $\mathcal{X}_{r}G$ to the orbit space $\hom(F_{r},K)/K$, where
$K$ is a maximal compact subgroup of $G$ (see \cite{FL1}), acting
by conjugation. Since the homotopy is defined by the polar decomposition,
in the case $G=SL_{n}$, the strong deformation retraction from $\mathcal{X}_{r}SL_{n}$
to $\mathcal{X}_{r}SU_{n}$ commutes with the action of $C_{r,n}=\hom(F_{r},\mathbb{Z}_{n})$.
Then, from Proposition \ref{prop:SUPUisomorphisms} we get isomorphisms
in usual cohomology: 
\[
H^{*}(\mathcal{X}_{r}SL_{n})\cong H^{*}(\mathcal{X}_{r}SU_{n})\cong H^{*}(\mathcal{X}_{r}PU_{n})\cong H^{*}(\mathcal{X}_{r}PGL_{n}).
\]
Since the quotient $\mathcal{X}_{r}SL_{n}\to\mathcal{X}_{r}PGL_{n}$
is algebraic, the above isomorphism $H^{*}(\mathcal{X}_{r}SL_{n})\cong H^{*}(\mathcal{X}_{r}PGL_{n})$
is an isomorphism of mixed Hodge structures. Moreover, the strong
deformation retraction from $X:=\mathcal{X}_{r}SL_{n}$ to $SU_{r,n}$
restricts to a strong deformation retraction from the strictly polystable
locus $Y:=\bigsqcup_{|[k]|\geq2}\mathcal{X}_{r}^{[k]}SL_{n}$ to $\bigsqcup_{|[k]|\geq2}SU_{n}^{[k]}/PU(n)$
(because the polar decomposition preserves reducible representations,
see \cite{FL1}). Thus, in the same fashion, we obtain isomorphisms
of mixed Hodge structures $H^{*}(Y)\cong H^{*}(\hat{Y})$, where $\hat{Y}:=\bigsqcup_{|[k]|\geq2}\mathcal{X}_{r}^{[k]}PGL_{n}.$

Now, considering the natural open inclusion $j:X\setminus Y=\mathcal{X}_{r}^{irr}SL_{n}\hookrightarrow X$
and the closed inclusion $i:Y\hookrightarrow X=\mathcal{X}_{r}SL_{n}$,
we can consider the long exact sequence: 
\[
\cdots\to H^{k-1}(X)\to H^{k-1}(Y)\to H_{c}^{k}(X\setminus Y)\to H^{k}(X)\to H^{k}(Y)\to\cdots
\]
which comes from the short exact sequence of sheaves $0\to j_{!}j^{*}\underline{\mathbb{C}}\to\underline{\mathbb{C}}\to i_{*}i^{*}\underline{\mathbb{C}}\to0$
where $\underline{\mathbb{C}}$ is the locally constant $\mathbb{C}$-valued
sheaf (a standard reference is \cite{Iv}, or see \cite[pag. 31]{Gor},
for a summarized account). By applying the same sequence to the closed
subvariety $\hat{Y}\subset\hat{X}:=\mathcal{X}_{r}PGL_{n}$, we get
a sequence of isomorphisms: 
\[
\begin{array}{ccccccccccccc}
\cdots & \to & H^{k-1}(X) & \to & H^{k-1}(Y) & \to & H_{c}^{k}(X\setminus Y) & \to & H^{k}(X) & \to & H^{k}(Y) & \to & \cdots\\
 &  & \parallel\wr &  & \parallel\wr &  & \uparrow &  & \parallel\wr &  & \parallel\wr\\
\cdots & \to & H^{k-1}(\hat{X}) & \to & H^{k-1}(\hat{Y}) & \to & H_{c}^{k}(\hat{X}\setminus\hat{Y}) & \to & H^{k}(\hat{X}) & \to & H^{k}(\hat{Y}) & \to & \cdots
\end{array}
\]
which provide, using the 5-lemma, the wanted isomorphisms: %
\[
H_{c}^{k}(\mathcal{X}_{r}^{irr}SL_{n})=H_{c}^{k}(X\setminus Y)\cong H_{c}^{k}(\hat{X}\setminus\hat{Y})=H_{c}^{k}(\mathcal{X}_{r}^{irr}PGL_{n}).
\]
Finally, since the finite quotient $\mathcal{X}_{r}^{irr}SL_{n}\to\mathcal{X}_{r}^{irr}PGL_{n}$
is algebraic, this implies the isomorphism of mixed Hodge structures
on the corresponding compactly supported cohomology groups, and the
equality of $E$-polynomials. 
\end{proof}

\section{Explicit Computations in the free group case}

\label{section:explicit}When $\Gamma=F_{r}$, the free group in $r$
generators, Mozgovoy and Reineke obtained a general formula for the
count of points in $\mathcal{X}_{r}^{irr}GL_{n}$ over finite fields,
showing that these schemes are of polynomial type (see \cite{MR}).
In this section, we explore these formulae in detail and, by using
the \emph{plethystic exponential} correspondence proved in \cite{FNZ},
we provide a finite algorithm to obtain the Serre polynomials and
the Euler characteristics of all the strata $\mathcal{X}_{r}^{[k]}G$,
for all partitions $[k]\in\mathcal{P}_{n}$ and $G=GL_{n}$, $SL_{n}$
or $PGL_{n}$.

\subsection{Serre polynomials of irreducible $GL_{n}$-character varieties $\mathcal{X}_{r}^{irr}G$.}

Let us recall the definition of the plethystic functions, and the
correspondence proved in \cite{FNZ}. Define the \emph{Adams operator}
$\Psi$ as the invertible $\mathbb{Q}$-linear operator acting on
$\mathbb{Q}[x][[t]]$ by $\Psi(x^{i}t^{k}):=\sum_{l\geq1}\frac{x^{li}t^{lk}}{l},$
where $(i,k)\in\mathbb{N}_{0}^{2}\setminus\{(0,0)\}$, with inverse
given by $\Psi^{-1}(x^{j}t^{k})=\sum_{l\geq1}\frac{\mu(l)}{l}x^{jl}t^{kl},$
and $\mu$ is the Möbius function $\mu:\mathbb{N}\to\{0,\pm1\}$ ($\mu(n)=(-1)^{k}$
if $n$ is square free with $k$ primes in its factorization; $\mu(n)=0$
otherwise).

Given a power series $f\in\mathbb{Q}[x][[t]]$, formal in $t$: 
\[
f(x,t)=1+\sum_{n\geq1}f_{n}(x)\,t^{n}\;,
\]
where $f_{n}(x)\in\mathbb{Q}[x]$ are polynomials in $x$, define
the plethystic exponential, $\pexp:\mathbb{Q}[x][[t]]\to1+t\mathbb{Q}[x][[t]]$,
and plethystic logarithm, $\plog$ as: 
\[
\pexp(f):=e^{\Psi(f)},\quad\quad\plog(f):=\Psi^{-1}(\log f).
\]
As established in \cite[Theorem 1.1 and Corollary 1.2]{FNZ}, $GL_{n}$-character
varieties can be expressed in terms of irreducible character varieties
of lower dimension, by means of the plethystic exponential. 
\begin{thm}
\cite[Theorem 1.1]{FNZ} \label{thm:mainFNZ}Let $\Gamma$ be a finitely
presented group. Then, in $\mathbb{Q}[u,v][[t]]$: 
\[
\sum_{n\geq0}E(\mathcal{X}_{\Gamma}GL_{n};u.v)\,t^{n}=\pexp\left(\sum_{n\geq1}E(\mathcal{X}_{\Gamma}^{irr}GL_{n};u,v)\,t^{n}\right)\;.
\]

\end{thm}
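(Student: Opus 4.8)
The plan is to combine the polystable-type stratification of Proposition~\ref{prop:locally-closed-GLn} with the additivity of the Serre polynomial and the behaviour of $E$-polynomials under symmetric products. By additivity applied to the disjoint union $\mathcal{X}_{\Gamma}GL_n=\bigsqcup_{[k]\in\mathcal{P}_n}\mathcal{X}_{\Gamma}^{[k]}GL_n$, one has $E(\mathcal{X}_{\Gamma}GL_n)=\sum_{[k]\in\mathcal{P}_n}E(\mathcal{X}_{\Gamma}^{[k]}GL_n)$, so the whole identity reduces to evaluating each stratum and then assembling the resulting generating function over all $n$.

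First I would identify each stratum geometrically. A $[k]$-polystable class, with $[k]=[1^{k_1}\cdots n^{k_n}]$, is a closed orbit and is therefore determined precisely by the unordered multiset (with multiplicities) of its irreducible constituents, grouped by dimension. Grouping the $k_j$ irreducible summands of size $j$ yields an isomorphism of varieties
\[
\mathcal{X}_{\Gamma}^{[k]}GL_n\;\cong\;\prod_{j=1}^{n}\sym^{k_j}\!\big(\mathcal{X}_{\Gamma}^{irr}GL_j\big),
\]
since distinct block sizes are distinguishable while equal-size blocks may be permuted and repeated. The K\"unneth (multiplicative) property of $E$ then gives $E(\mathcal{X}_{\Gamma}^{[k]}GL_n)=\prod_{j=1}^{n}E\big(\sym^{k_j}\mathcal{X}_{\Gamma}^{irr}GL_j\big)$.

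The analytic heart is the symmetric-product formula for Serre polynomials: for any quasi-projective $X$,
\[
\sum_{m\geq 0}E(\sym^m X;u,v)\,t^m=\pexp\!\big(E(X;u,v)\,t\big).
\]
This is where the plethystic exponential genuinely enters. It follows from Macdonald's description of $H_c^{*}(\sym^m X)$ as the graded $m$-th symmetric power of $H_c^{*}(X)$ (the Koszul sign rule turning odd cohomology into exterior powers); passing to the Grothendieck group of mixed Hodge structures, $\sym^m$ becomes the $m$-th $\sigma$-operation of the natural $\lambda$-ring structure, and applying $E$—which is a $\lambda$-ring homomorphism sending a weight-$(p,q)$ class to $u^pv^q$—reproduces exactly the operator $\pexp(f)=e^{\Psi(f)}$ with $\Psi$ as defined above (a quick check on $X=\mathbb{C}^{*}$, where $\sym^m\mathbb{C}^{*}\cong\mathbb{C}^{m-1}\times\mathbb{C}^{*}$, confirms the identity).

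It then remains to reorganise the generating function. Because $t^{n}=\prod_j (t^{j})^{k_j}$ whenever $n=\sum_j j\,k_j$, summing over $n$ and over all partitions factorises as an Euler product over block sizes $j$; using that $\Psi$ commutes with the substitution $t\mapsto t^{j}$ and that $\pexp(f)\pexp(g)=\pexp(f+g)$ (by $\mathbb{Q}$-linearity of $\Psi$) one obtains
\[
\sum_{n\geq0}E(\mathcal{X}_{\Gamma}GL_n)\,t^{n}=\prod_{j\geq1}\pexp\!\big(E(\mathcal{X}_{\Gamma}^{irr}GL_j)\,t^{j}\big)=\pexp\!\Big(\sum_{j\geq1}E(\mathcal{X}_{\Gamma}^{irr}GL_j)\,t^{j}\Big),
\]
which is the claim. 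The main obstacle is the symmetric-product identity: one must control the mixed Hodge structure on $H_c^{*}(\sym^m X)$ for singular, non-compact $X$ and track the odd-degree signs so that the Euler-characteristic specialisation lands exactly on the Adams-operator definition of $\pexp$ used here. The stratum identification is geometrically natural but also requires care to be upgraded from a bijection on points to an isomorphism of varieties; both of these are precisely what is carried out in \cite{FNZ}.
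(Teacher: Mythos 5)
Your argument reconstructs exactly the proof strategy of the cited source: this theorem is quoted in the paper from \cite[Theorem 1.1]{FNZ} without an internal proof, and the proof given there proceeds precisely as you do, by stratifying $\mathcal{X}_{\Gamma}GL_{n}$ by polystable type, identifying each stratum $\mathcal{X}_{\Gamma}^{[k]}GL_{n}$ with $\prod_{j}\sym^{k_{j}}(\mathcal{X}_{\Gamma}^{irr}GL_{j})$ (this is \cite[Proposition 4.5]{FNZ}, also invoked later in the present paper), and applying the Macdonald--Cheah symmetric-product formula, which is exactly the plethystic exponential in the Adams-operator form used here. So your proposal is correct and takes essentially the same route, with the two ingredients you rightly flag as requiring care (the variety-level stratum identification and the mixed Hodge version of Macdonald's formula) being precisely what \cite{FNZ} supplies.
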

Using results of \cite{MR} for the character varieties of the free
group $F_{r}$, this relationship can be made explicit, in terms of
partitions $[k]=[1^{k_{1}}\cdots d^{k_{d}}]\in\mathcal{P}_{d}$, with
length denoted by $|[k]|=k_{1}+\cdots+k_{d}$. Let $\binom{m}{k_{1},\cdots,k_{d}}=m!(k_{1}!\cdots k_{d}!)^{-1}$
be the multinomial coefficients. 
\begin{prop}
\label{prop:Bnformula} Let $r,n\geq2$. The $E$-polynomials of the
irreducible character varieties $B_{n}^{r}(x):=e(\mathcal{X}_{r}^{irr}GL_{n})$
are explicitly given by: 
\[
B_{n}^{r}(x)=(x-1)\sum_{d|n}\frac{\mu(n/d)}{n/d}\,\sum_{[k]\in\mathcal{P}_{d}}\frac{(-1)^{|[k]|}}{|[k]|}\binom{|[k]|}{k_{1},\cdots,k_{d}}\prod_{j=1}^{d}b_{j}(x^{n/d})^{k_{j}}x^{\frac{n(r-1)k_{j}}{d}\binom{j}{2}}\;,
\]
where $b_{j}(x)$ are given by $F^{-1}(t)=1+\sum_{n\geq1}b_{n}t^{n},$
for the series: 
\begin{equation}
F(t)=1+\sum_{n\geq1}\big((x-1)(x^{2}-1)\ldots(x^{n}-1)\big)^{r-1}\,t^{n}.\label{eq:F(t)}
\end{equation}
\end{prop}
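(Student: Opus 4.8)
The plan is to obtain the formula by combining the plethystic relation of Theorem~\ref{thm:mainFNZ} with the explicit $GL_n$ point counts of Mozgovoy--Reineke \cite{MR}, and then to unwind the plethystic logarithm $\plog=\Psi^{-1}\circ\log$ into its two constituent operations. Specializing Theorem~\ref{thm:mainFNZ} to $\Gamma=F_r$ and to the single variable $x=uv$, and inverting $\pexp$, I would first write
\[
\sum_{n\ge1}B_n^r(x)\,t^n=\plog\Big(\sum_{n\ge0}e(\mathcal{X}_r GL_n)\,t^n\Big)=\Psi^{-1}\log\Big(\sum_{n\ge0}e(\mathcal{X}_r GL_n)\,t^n\Big).
\]
The whole problem is thereby reduced to making the right-hand side explicit, which is exactly where $F(t)$ enters.

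Next I would identify the generating series. By \cite{MR} the schemes $\mathcal{X}_r GL_n$ are of polynomial type, so (by Katz, as in subsection 4.5) their point counts over $\mathbb{F}_q$ agree with $e(\mathcal{X}_r GL_n)$, and this is the input I need. The natural building block is $F(t)$: its coefficient $\big((x-1)\cdots(x^n-1)\big)^{r-1}$ equals $x^{-(r-1)\binom{n}{2}}\,e(GL_n)^{r-1}=x^{-(r-1)\binom{n}{2}}\,e(\mathcal{R}_r GL_n)/e(GL_n)$, i.e.\ the stacky class of the representation variety with its Lefschetz twist removed. Passing from ``all'' representations to irreducible ones is an inversion, which is why the coefficients $b_j$ of $F^{-1}$ appear; reinstating the twist $b_j\mapsto b_j\,x^{(r-1)\binom{j}{2}}$ produces the series $G(t):=1+\sum_{j\ge1}b_j\,x^{(r-1)\binom{j}{2}}t^j$. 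The claim to be verified is then that $\plog$ of the $GL_n$ series equals $(x-1)\,\Psi^{-1}(-\log G)$, where the global factor $x-1=e(\mathbb{C}^*)=e(ZGL_n)$ records the center of $GL_n$ (compare Proposition~\ref{prop:PG-fibration-X} and Example~\ref{exa:PGL-section}).

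With the series identified, the combinatorial part is purely formal. Expanding $\log\big(1+\sum_{j\ge1}\beta_j t^j\big)$ by the multinomial theorem and collecting by partitions $[k]=[1^{k_1}\cdots d^{k_d}]\in\mathcal{P}_d$ gives
\[
[t^d]\log\Big(1+\sum_{j\ge1}\beta_j t^j\Big)=\sum_{[k]\in\mathcal{P}_d}\frac{(-1)^{|[k]|-1}}{|[k]|}\binom{|[k]|}{k_1,\cdots,k_d}\prod_{j=1}^{d}\beta_j^{\,k_j},
\]
so that taking $-\log$ (equivalently $\log$ of the reciprocal series) flips the sign to $(-1)^{|[k]|}$, matching the statement with $\beta_j=b_j\,x^{(r-1)\binom{j}{2}}$. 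Applying $\Psi^{-1}$ then extracts the coefficient of $t^n$ as $\sum_{d\mid n}\frac{\mu(n/d)}{n/d}(\,\cdot\,)\big|_{x\mapsto x^{n/d}}$, which is precisely the outer divisor sum together with the substitution $x\mapsto x^{n/d}$ appearing both in the exponent $\frac{n(r-1)k_j}{d}\binom{j}{2}$ and in $b_j(x^{n/d})$. Reinstating the factor $x-1$ yields the displayed formula.

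The main obstacle, I expect, lies in the bookkeeping of the two normalizations rather than in the combinatorics. Because $\binom{j}{2}$ is not additive, $\binom{a+b}{2}=\binom{a}{2}+\binom{b}{2}+ab$, the Lefschetz twist commutes with neither $\log$ nor $\Psi^{-1}$; hence $F^{-1}$ must be formed in the \emph{untwisted} variable and the twist reinstated block-by-block inside the partition product, exactly as written. Likewise the center factor $x-1$ sits \emph{outside} $\Psi^{-1}$, so the identity is not a formal consequence of the inversion and genuinely relies on the precise shape of the Mozgovoy--Reineke count; checking that this count, after the twist and the $(x-1)$-normalization, reproduces $\Psi^{-1}\log\big(\sum_n e(\mathcal{X}_r GL_n)t^n\big)$ is the step demanding the most care. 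As a sanity check, the case $r=1$ gives $F=1/(1-t)$, $F^{-1}=1-t$, and the formula collapses to $B_1^1=x-1$ with $B_n^1=0$ for $n\ge2$, consistent with $\mathcal{X}_1^{irr}GL_n$ being empty for $n\ge2$.
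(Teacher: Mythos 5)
Your combinatorial half is sound: the multinomial expansion of $\log$ indexed by partitions followed by $\Psi^{-1}$, producing the divisor sum $\sum_{d|n}\frac{\mu(n/d)}{n/d}$ together with the substitution $x\mapsto x^{n/d}$, is exactly the paper's Lemma \ref{lem:plethystic-Log}, and your bookkeeping remarks (the twist must be reinstated block-by-block since $\binom{j}{2}$ is not additive; the factor $x-1$ sits outside $\Psi^{-1}$) are correct. But there is a genuine gap at the center of the argument: the identity you yourself label ``the claim to be verified'', namely
\[
\plog\Big(\sum_{n\ge0}e(\mathcal{X}_{r}GL_{n})\,t^{n}\Big)=(x-1)\,\Psi^{-1}\big(-\log\,(S\circ F^{-1}(t))\big),
\]
is never proved. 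The paragraph identifying the coefficients of $F(t)$ as $x^{-(r-1)\binom{n}{2}}e(GL_n)^{r-1}$ and asserting that ``passing from all representations to irreducible ones is an inversion'' is a heuristic, not an argument: why the inverse of \emph{this} series $F$, with \emph{this} twist $S$, and why the plethystic (rather than ordinary) logarithm with $1-x$ outside $\Psi^{-1}$, is precisely the content of Mozgovoy--Reineke's counting theorem, whose proof requires genuine finite-field/Hall-algebra input. Modulo Theorem \ref{thm:mainFNZ} and Katz's theorem, your unproven claim is equivalent to that theorem, so the proposal reduces the Proposition to an input it does not supply -- and you concede as much by calling it ``the step demanding the most care''.

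The comparison with the paper also shows your route is roundabout. The paper's proof does not invoke Theorem \ref{thm:mainFNZ} at all: it cites \cite[Theorem 1.2]{MR}, which directly gives the generating series of the \emph{irreducible} counts,
\[
\sum_{n\ge1}P_{n}(x)\,t^{n}=(1-x)\,\plog\,(S\circ F^{-1}(t)),
\]
then uses Katz's theorem (polynomial type of $\mathcal{X}_{r}^{irr}GL_{n}$, by \cite[Thm. 1.1]{MR}) to identify $P_n$ with $B_n^r$, and Lemma \ref{lem:plethystic-Log} to unwind the right-hand side. If you cite MR's Theorem 1.2, your detour through the full $GL_n$ series and the inversion of $\pexp$ becomes unnecessary; if you do not cite it, you must re-derive it, which your sketch does not do. (Your $r=1$ sanity check is a nice consistency test of the final formula, but it does not bear on this gap.)
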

\begin{proof}
As mentioned above, the varieties $\mathcal{X}_{r}^{irr}GL_{n}$ are
of polynomial type by \cite[Thm. 1.1]{MR}. So, by Katz's theorem
\cite[Appendix]{HRV}, $B_{n}^{r}(x):=e_{x}(\mathcal{X}_{r}^{irr}GL_{n})$
is obtained by replacing $q$ by $x$ in the counting polynomial $P_{n}(q):=|\mathcal{X}_{r}^{irr}GL_{n}/\mathbb{F}_{q}|$
which in \cite[Theorem 1.2]{MR} is shown to have generating series:
\[
\sum_{n\geq1}B_{n}^{r}(x)\,t^{n}=\sum_{n\geq1}P_{n}(x)\,t^{n}=(1-x)\plog(S\circ F^{-1}(t))\;,
\]
with $F(t)$ as in \eqref{eq:F(t)}, and $S$ is a $\mathbb{Q}[x]$-linear
shift operator defined on $\mathbb{Q}[x][[t]]$ by $S(t)=t$, and
$S(t^{n}):=x^{(r-1)\binom{n}{2}}t^{n},$ for $n\geq2$. Hence, we
get 
\[
S\circ F^{-1}(t)=1+\sum_{n\geq1}b_{n}(x)\,x^{(r-1)\binom{n}{2}}\,t^{n}\;.
\]
So, the Proposition follows from Lemma \ref{lem:plethystic-Log} below,
using 
\[
f_{n}(x)=b_{n}(x)\,x^{(r-1)\binom{n}{2}}\;
\]
since, for a partition $[k]\in\mathcal{P}_{d}$, we have $f_{j}(x^{n/d})^{k_{j}}=b_{j}(x^{n/d})^{k_{j}}\,x^{\frac{n}{d}(r-1)k_{j}\binom{j}{2}}$,
$j=1,\ldots,d$, as wanted. 
\end{proof}
To complete the proof of Proposition \ref{prop:Bnformula}, we need
the following 
\begin{lem}
\label{lem:plethystic-Log} Given $f_{n}(x)\in\mathbb{Q}[x]$, $n\in\mathbb{N}$,
the coefficient of $t^{n}$ in $\plog(1+\sum_{n\geq1}f_{n}(x)\,t^{n})$
is 
\[
\sum_{d|n}\sum_{[k]\in\mathcal{P}_{d}}\frac{\mu(n/d)}{n/d}\frac{(-1)^{|[k]|-1}}{|[k]|}\binom{|[k]|}{k_{1},\cdots,k_{d}}\prod_{j=1}^{d}f_{j}(x^{n/d})^{k_{j}}\;.
\]
\end{lem}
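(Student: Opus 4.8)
The plan is to unwind the definition $\plog(f)=\Psi^{-1}(\log f)$ in two stages: first compute the $t$-expansion of $\log f$ by a direct power-series expansion, grouping monomials according to partition type, and then apply the inverse Adams operator $\Psi^{-1}$ term by term and read off the coefficient of $t^n$.

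First I would write $f=1+g$ with $g=\sum_{m\geq 1}f_m(x)\,t^m$ and use $\log(1+g)=\sum_{k\geq 1}\frac{(-1)^{k-1}}{k}g^{k}$. Expanding $g^{k}=\sum_{m_1,\dots,m_k\geq 1}f_{m_1}(x)\cdots f_{m_k}(x)\,t^{m_1+\cdots+m_k}$ and collecting the coefficient of $t^d$, I would group the ordered tuples $(m_1,\dots,m_k)$ by their underlying partition $[k]=[1^{k_1}\cdots d^{k_d}]\in\mathcal{P}_d$, whose length $|[k]|=k_1+\cdots+k_d$ equals $k$. The number of ordered tuples with prescribed multiplicities $k_1,\dots,k_d$ is the multinomial coefficient $\binom{|[k]|}{k_1,\cdots,k_d}$, so that
\begin{equation*}
[t^d]\log f=\sum_{[k]\in\mathcal{P}_d}\frac{(-1)^{|[k]|-1}}{|[k]|}\binom{|[k]|}{k_1,\cdots,k_d}\prod_{j=1}^{d}f_j(x)^{k_j}=:g_d(x).
\end{equation*}

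Next I would apply $\Psi^{-1}$. Since $\Psi^{-1}$ is $\QQ$-linear with $\Psi^{-1}(x^{j}t^{d})=\sum_{l\geq 1}\frac{\mu(l)}{l}x^{jl}t^{dl}$, writing $g_d(x)=\sum_j c_{d,j}x^{j}$ gives $\Psi^{-1}(g_d(x)\,t^{d})=\sum_{l\geq 1}\frac{\mu(l)}{l}g_d(x^{l})\,t^{dl}$, where the substitution $x\mapsto x^{l}$ inside $g_d$ is exactly the effect of $\Psi^{-1}$ on the $x$-variable. Summing over $d$ and extracting the coefficient of $t^n$ forces $dl=n$, i.e. $d\mid n$ and $l=n/d$, yielding
\begin{equation*}
[t^n]\plog(f)=\sum_{d\mid n}\frac{\mu(n/d)}{n/d}\,g_d(x^{n/d}),
\end{equation*}
which upon substituting the formula for $g_d$ is precisely the claimed expression.

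The computation is essentially formal, so there is no deep obstacle; the one point that genuinely requires care is that $\Psi^{-1}$ is only $\QQ$-linear and \emph{not} $\QQ[x]$-linear, so it acts nontrivially on the coefficient polynomials $f_j(x)$, producing the substitution $x\mapsto x^{n/d}$ that appears inside the $f_j$ in the final formula. The remaining bookkeeping of the partition grouping (verifying that $\binom{|[k]|}{k_1,\cdots,k_d}$ counts precisely the ordered tuples of a given type and that $|[k]|=k$) is the other routine step to confirm.
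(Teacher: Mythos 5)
Your proof is correct and follows essentially the same route as the paper's: expand $\log(1+\sum_{m}f_{m}(x)t^{m})$ via the multinomial theorem with monomials grouped by partition type, then apply the $\mathbb{Q}$-linear operator $\Psi^{-1}$ term by term, so that extracting the coefficient of $t^{n}$ forces $d\mid n$, $l=n/d$, and produces both the factor $\frac{\mu(n/d)}{n/d}$ and the substitution $x\mapsto x^{n/d}$ inside the $f_{j}$. The only difference is cosmetic: the paper records the general formula for $\Psi^{-1}$ applied to a series first and the logarithm expansion second, whereas you do the two steps in the opposite order.
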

\begin{proof}
From the $\mathbb{Q}$-linearity of $\Psi^{-1}$, we can write, for
a sequence of polynomials $g_{m}(x)\in\mathbb{Q}[x]$, $m\in\mathbb{N}$,
\begin{eqnarray}
\Psi^{-1}(\sum_{m\geq1}g_{m}(x)t^{m}) & = & \sum_{m\geq1}\Psi^{-1}(g_{m}(x)t^{m})=\nonumber \\
 & = & \sum_{l\geq1}\frac{\mu(l)}{l}g_{1}(x^{l})t^{l}+\sum_{l\geq1}\frac{\mu(l)}{l}g_{2}(x^{l})t^{2l}+\cdots=\nonumber \\
 & = & \sum_{l\geq1}\frac{\mu(l)}{l}\sum_{d\geq1}g_{d}(x^{l})t^{dl}=\nonumber \\
 & = & \sum_{n\geq1}\sum_{d|n}\frac{\mu(n/d)}{n/d}g_{d}(x^{n/d})t^{n}.\label{eq:psi-1}
\end{eqnarray}
Now, from the series development of $\log(1+z)=z-\frac{z^{2}}{2}+\frac{z^{3}}{3}-\cdots$,
we compute, using the multinomial theorem: 
\begin{eqnarray*}
\log(1+\sum_{n\geq1}f_{n}(x)\,t^{n}) & = & \left(\sum_{n\geq1}f_{n}(x)\,t^{n}\right)-\frac{1}{2}\left(\sum_{n\geq1}f_{n}(x)\,t^{n}\right)^{2}+\frac{1}{3}\left(\sum_{n\geq1}f_{n}(x)\,t^{n}\right)^{3}-\cdots\\
 & = & \sum_{m\geq1}\left[\sum_{[k]\in\mathcal{P}_{m}}\frac{(-1)^{|[k]|-1}}{|[k]|}\binom{|[k]|}{k_{1},\cdots,k_{m}}\prod_{j=1}^{m}f_{j}(x)^{k_{j}}\right]t^{m}.
\end{eqnarray*}
Finally, we apply formula \eqref{eq:psi-1} to 
\[
g_{m}(x):=\sum_{[k]\in\mathcal{P}_{m}}\frac{(-1)^{|[k]|-1}}{|[k]|}\binom{|[k]|}{k_{1},\cdots,k_{m}}\prod_{j=1}^{m}f_{j}(x)^{k_{j}}\;,
\]
proving the Lemma. \end{proof}
\begin{rem}
When $n$ is a prime number the formula in Proposition \ref{prop:Bnformula}
simplifies to: 
\[
B_{n}^{r}(x)=(x-1)\left[\frac{b_{1}(x^{n})}{n}+\sum_{[k]\in\mathcal{P}_{n}}\frac{(-1)^{|[k]|}}{|[k]|}\binom{|[k]|}{k_{1},\cdots,k_{n}}\prod_{j=1}^{n}b_{j}(x)^{k_{j}}x^{(r-1)k_{j}\binom{j}{2}}\right]\;.
\]

\end{rem}
Using Proposition \ref{prop:Bnformula}, we can write down explicitly
the $E$-polynomials of $\mathcal{X}_{r}^{irr}GL_{n}$, for any value
of $n$ in a recursive way, the first four cases being as follows. 
\begin{lem}
\label{lemma:Bn(x)} The $E$-polynomials of the irreducible character
varieties $B_{n}^{r}(x)=e(\mathcal{X}_{r}^{irr}GL_{n})$, for $n=1,2,3$
and $4$, using the substitution $s=r-1$, are given by: 
\begin{eqnarray*}
\frac{B_{1}^{r}(x)}{x-1} & = & (x-1)^{r-1}=(x-1)^{s}\;,\\
\frac{B_{2}^{r}(x)}{x-1} & = & \frac{1}{2}b_{1}(x^{2})+\frac{1}{2}b_{1}(x)^{2}-b_{2}(x)x^{s}\;,\\
 & = & (x-1)^{s}\Big((x-1)^{s}x^{s}((x+1)^{s}-1)+\frac{1}{2}(x-1)^{s}-\frac{1}{2}(x+1)^{s}\Big),
\end{eqnarray*}
\\[-7mm] 
\begin{eqnarray*}
\frac{B_{3}^{r}(x)}{x-1} & = & \frac{1}{3}b_{1}(x^{3})-\frac{1}{3}b_{1}(x)^{3}+b_{1}(x)b_{2}(x)x^{s}-b_{3}(x)x^{3s}\\
 & = & (x-1)^{s}\Big(-\frac{1}{3}(x^{2}+x+1)^{s}+(x-1)^{2s}(\frac{1}{3}-x^{s}+x^{s}(x+1)^{s}\\
 &  & +x^{3s}+x^{3s}(x+1)^{s}(x^{2}+x+1)^{s}-2x^{3s}(x+1)^{s})\Big),
\end{eqnarray*}
\\[-8mm] 
\begin{eqnarray*}
\frac{B_{4}^{r}(x)}{x-1} & = & (x-1)^{2s}\Big(\frac{1}{4}(x-1)^{2s}-\frac{1}{4}(x+1)^{2s}+(x^{2}-1)^{s}x^{s}(1-(x+1)^{s})\\
 &  & +\frac{1}{2}(x+1)^{2s}x^{2s}(1-(x^{2}+1)^{s})+\frac{1}{2}(x-1)^{2s}x^{2s}(1-(x+1)^{s})^{2}\\
 &  & -(x-1)^{2s}x^{3s}(-(x+1)^{s}(x^{2}+x+1)^{s}+2(x+1)^{s}-1)\\
 &  & -(x-1)^{2s}x^{6s}(-(x+1)^{s}(x^{2}+x+1)^{s}(x^{3}+x^{2}+x+1)^{s}\\
 &  & +2(x+1)^{s}(x^{2}+x+1)^{s}+(x+1)^{2s}-3(x+1)^{s}+1\big)\Big)\;.
\end{eqnarray*}
\end{lem}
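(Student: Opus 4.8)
The plan is to derive everything from Proposition~\ref{prop:Bnformula}, which already expresses $B_n^r(x)$ entirely in terms of the inverse-series coefficients $b_j(x)$ defined by $F^{-1}(t) = 1 + \sum_{n\ge1} b_n(x)\,t^n$. Thus the only genuinely new computational input is a closed expression for the first few $b_j(x)$. Writing $F(t) = 1 + \sum_{n\ge1} a_n t^n$ with $a_n = \big((x-1)(x^2-1)\cdots(x^n-1)\big)^s$ and $s=r-1$, I would invert the series using the standard recursion coming from $F\cdot F^{-1}=1$, namely $b_n = -\sum_{i=1}^n a_i\,b_{n-i}$ with $b_0=1$. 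This yields
\begin{align*}
b_1 &= -a_1, & b_2 &= a_1^2 - a_2,\\
b_3 &= -a_1^3 + 2a_1 a_2 - a_3, & b_4 &= a_1^4 - 3a_1^2 a_2 + a_2^2 + 2a_1 a_3 - a_4,
\end{align*}
and substituting the explicit $a_i$ (for instance $a_1=(x-1)^s$ and $a_2=(x-1)^{2s}(x+1)^s$) produces polynomial expressions such as $b_1(x)=-(x-1)^s$ and $b_2(x)=(x-1)^{2s}\big(1-(x+1)^s\big)$.

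I would then feed these into Proposition~\ref{prop:Bnformula} case by case. For $n=1$ the claim is immediate, since $\mathcal{X}_r^{irr}GL_1=(\mathbb{C}^*)^r$ gives $B_1^r=(x-1)^r$. For the primes $n=2,3$ I would use the simplified formula recorded in the Remark following Proposition~\ref{prop:Bnformula}, enumerate $\mathcal{P}_2=\{[1^2],[2]\}$ and $\mathcal{P}_3=\{[1^3],[1\,2],[3]\}$, and evaluate the multinomial-weighted products $\frac{(-1)^{|[k]|}}{|[k]|}\binom{|[k]|}{k_1,\cdots,k_n}\prod_j b_j(x)^{k_j} x^{(r-1)k_j\binom{j}{2}}$, together with the leading $d=1$ contribution $\frac1n b_1(x^n)$. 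As a check, the $[1^2]$ and $[2]$ terms of $\mathcal{P}_2$ give exactly $\tfrac12 b_1(x)^2 - b_2(x)\,x^s$, which combined with $\tfrac12 b_1(x^2)$ reproduces the stated $B_2^r$; the $n=3$ line follows in the same manner.

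For $n=4$ I would use the full formula, where the divisor sum runs over $d\mid 4$. The M\"obius factor $\mu(4)=0$ kills the $d=1$ term, so only $d=2$ (with weight $-\tfrac12$ and coefficients $b_j(x^2)$) and $d=4$ (with weight $1$ and coefficients $b_j(x)$) survive. Here I must enumerate the five partitions $[1^4],[1^2 2],[2^2],[1\,3],[4]$ of $\mathcal{P}_4$ and the two of $\mathcal{P}_2$, assemble the corresponding products with their $x$-exponents $\frac{n(r-1)k_j}{d}\binom{j}{2}$, and then extract the common factor $(x-1)^{2s}$ to recover the four-line expression.

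I expect the main obstacle to be purely the algebraic bookkeeping in the $n=4$ case: tracking the partition multiplicities, the multinomial coefficients, and the $x$-exponents simultaneously, while correctly distinguishing the $b_j(x^2)$ contributions (from $d=2$) from the $b_j(x)$ contributions (from $d=4$), and finally verifying that the various powers of $(x-1)$ combine so that $(x-1)^{2s}$ factors out cleanly. No step is conceptually deep, but the simplification is lengthy and error-prone, so I would organize it by writing each $b_j$ in fully factored form first and substituting only at the very end.
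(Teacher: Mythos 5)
Your proposal is correct and follows essentially the same route as the paper's own proof: invert the series $F(t)$ via the recursion $\sum_{k\geq0}a_{k}b_{n-k}=0$ to get $b_{1}=-a_{1}$, $b_{2}=a_{1}^{2}-a_{2}$, $b_{3}=-a_{1}^{3}+2a_{1}a_{2}-a_{3}$ (and $b_{4}$, which you compute correctly and the paper leaves as ``etc.''), substitute the explicit $a_{n}(x)=\big((x-1)(x^{2}-1)\cdots(x^{n}-1)\big)^{r-1}$, and then feed the resulting $b_{j}(x)$ into Proposition \ref{prop:Bnformula} partition by partition. Your additional observations --- the $n=1$ case being immediate, the prime-case simplification for $n=2,3$, and the fact that $\mu(4)=0$ kills the $d=1$ term for $n=4$ so only $d=2$ and $d=4$ contribute --- are exactly the bookkeeping the paper's terse ``by substitution'' phrase elides, and they are all accurate.
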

\begin{proof}
We will make the formulae in Proposition \ref{prop:Bnformula} explicit,
by inverting formal power series. If 
\[
{\textstyle F(t)=1+\sum_{n\geq1}a_{n}\,t^{n},\quad\mbox{and }F^{-1}(t)=1+\sum_{n\geq1}b_{n}t^{n}}
\]
are formal inverses, the relation between $a_{n}$ and $b_{n}$ can
be obtained from $\sum_{k\geq0}a_{k}b_{n-k}=0$ ($a_{0}=b_{0}=1$),
recursively (valid for power series over any ring) as 
\begin{equation}
b_{1}=-a_{1}\quad\quad;b_{2}=a_{1}^{2}-a_{2};\quad\quad b_{3}=-a_{1}^{3}+2a_{1}a_{2}-a_{3},\quad\mbox{etc}.\label{eq:recursive}
\end{equation}
Now, employing $a_{n}(x):=\big((x-1)(x^{2}-1)\ldots(x^{n}-1)\big)^{r-1}$
as in (\ref{eq:F(t)}), we get: 
\begin{align*}
b_{1}(x) & =-a_{1}(x)=-(x-1)^{r-1}\;,\\
b_{2}(x) & =a_{1}^{2}(x)-a_{2}(x)=(x-1)^{2r-2}-(x-1)^{r-1}(x^{2}-1)^{r-1}\\
 & =(x-1)^{2r-2}\big(-(x+1)^{r-1}+1\big)\;,\\
b_{3}(x) & =-a_{1}^{3}(x)+2a_{1}(x)\,a_{2}(x)-a_{3}(x)=\\
 & =(x-1)^{3r-3}\big(-(x+1)^{r-1}(x^{2}+x+1)^{r-1}+2(x+1)^{r-1}-1\big),\mbox{ etc}\,,
\end{align*}
which, by substitution in Proposition \ref{prop:Bnformula}, completes
the proof. 
\end{proof}
Recall from \cite[Definition 4.14]{FNZ} the notion of rectangular
partition of $n$: a sequence of non-negative integers $0\leq k_{l,h}\leq n$,
$l,h\in\{1,\cdots,n\}$ satisfying $n=\sum_{l=1}^{n}\sum_{h=1}^{n}l\,h\,k_{l,h}$,
interpreted as a collection of $k_{l,h}$ rectangles of size $l\times h$,
with total area $n$. An element of the set $\mathcal{RP}_{n}$, of
rectangular partitions of $n$, is denoted: 
\[
[[k]]=[(1\times1)^{k_{1,1}}\,(1\times2)^{k_{1,2}}\cdots(1\times n)^{k_{1,n}}\cdots(n\times n)^{k_{n,n}}]\in\mathcal{RP}_{n}\;,
\]
and the ``gluing map'' sends a rectangular partition to a usual
partition 
\begin{eqnarray*}
\pi:\mathcal{RP}_{n} & \to & \mathcal{P}_{n}\\{}
[[k]] & \mapsto & [m]=[1^{m_{1}}\cdots n^{m_{n}}]\quad\mbox{where }m_{l}:=\sum_{h=1}^{n}h\cdot k_{l,h}.
\end{eqnarray*}

With the notion of rectangular partitions, we rephrase \cite[Corollary 1.2]{FNZ}
for the free group $\Gamma=F_{r}$ and $G=GL_{n}$ case. 
\begin{thm}
\cite[Corollary 1.2]{FNZ} \label{thm:individual-strata} The E-polynomial
of the $GL_{n}$-character variety of the free group in $r$ generators
is 
\[
e(\mathcal{X}_{r}GL_{n})=\sum_{[[k]]\in\mathcal{RP}_{n}}\ \prod_{l,h=1}^{n}\frac{B_{l}^{r}(x^{h})^{k_{l,h}}}{k_{l,h}!\,h^{k_{l,h}}},
\]
and the E-polynomial of the stratum corresponding to a partition $[m]\in\mathcal{P}_{n}$
is 
\[
e(\mathcal{X}_{r}^{[m]}GL_{n})={\displaystyle \sum_{[[k]]\in\pi^{-1}[m]}\ \prod_{l,h=1}^{n}\frac{B_{l}^{r}(x^{h})^{k_{l,h}}}{k_{l,h}!\,h^{k_{l,h}}}}\;.
\]

\end{thm}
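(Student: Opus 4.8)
The plan is to obtain both formulas by unwinding the plethystic identity of Theorem~\ref{thm:mainFNZ} into an explicit product expansion. First I would specialize $u=v=\sqrt{x}$ in Theorem~\ref{thm:mainFNZ}; writing $B_{l}^{r}(x)=e(\mathcal{X}_{r}^{irr}GL_{l})$ as in Proposition~\ref{prop:Bnformula} and using the notation \eqref{eq:small-e}, this produces the one-variable identity
\[
\sum_{n\geq0}e(\mathcal{X}_{r}GL_{n})\,t^{n}=\pexp\Big(\sum_{l\geq1}B_{l}^{r}(x)\,t^{l}\Big)\in\mathbb{Q}[x][[t]].
\]
Next I would apply the Adams operator. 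Since $\Psi$ is $\mathbb{Q}$-linear and $\Psi(x^{i}t^{l})=\sum_{h\geq1}\frac{x^{hi}t^{hl}}{h}$, it sends each summand $B_{l}^{r}(x)\,t^{l}$ to $\sum_{h\geq1}\frac{1}{h}B_{l}^{r}(x^{h})\,t^{hl}$, so that
\[
\Psi\Big(\sum_{l\geq1}B_{l}^{r}(x)\,t^{l}\Big)=\sum_{l,h\geq1}\frac{B_{l}^{r}(x^{h})}{h}\,t^{lh}.
\]

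Then I would exponentiate and factor the result over the index pairs $(l,h)$, which is legitimate $t$-adically because each factor has lowest $t$-degree $lh\geq1$:
\[
\pexp\Big(\sum_{l\geq1}B_{l}^{r}(x)t^{l}\Big)=\prod_{l,h\geq1}\exp\Big(\tfrac{B_{l}^{r}(x^{h})}{h}\,t^{lh}\Big)=\prod_{l,h\geq1}\sum_{k_{l,h}\geq0}\frac{B_{l}^{r}(x^{h})^{k_{l,h}}}{k_{l,h}!\,h^{k_{l,h}}}\,t^{lh\,k_{l,h}}.
\]
Extracting the coefficient of $t^{n}$ then amounts to summing over all families $(k_{l,h})_{l,h}$ of non-negative integers with $\sum_{l,h}l\,h\,k_{l,h}=n$; by definition these are precisely the elements $[[k]]\in\mathcal{RP}_{n}$, and this gives the first formula.

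For the stratum formula I would argue that the polystable-type decomposition refines this expansion. A $[m]$-polystable class is an unordered collection of $m_{j}$ irreducibles of size $j$ for each $j$ (cf.\ Proposition~\ref{prop:locally-closed-GLn}), giving an isomorphism $\mathcal{X}_{r}^{[m]}GL_{n}\cong\prod_{j=1}^{n}\sym^{m_{j}}(\mathcal{X}_{r}^{irr}GL_{j})$. Combining this with the symmetric-power formula $\sum_{m}e(\sym^{m}X)\,t^{m}=\pexp(e(X)\,t)$, whose coefficient of $t^{m_{j}}$ is $\sum_{\sum_{h}h\,k_{j,h}=m_{j}}\prod_{h}\frac{B_{j}^{r}(x^{h})^{k_{j,h}}}{k_{j,h}!\,h^{k_{j,h}}}$, and taking the product over $j$, one sees that the terms of the expansion above whose indices satisfy $\sum_{h}h\,k_{l,h}=m_{l}$ for every $l$ — exactly the $[[k]]$ with $\pi([[k]])=[m]$ — sum to $e(\mathcal{X}_{r}^{[m]}GL_{n})$. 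As a consistency check, summing over $[m]\in\mathcal{P}_{n}$ regroups $\mathcal{RP}_{n}=\bigsqcup_{[m]}\pi^{-1}[m]$ and recovers the total formula via additivity of the $E$-polynomial over the stratification of Proposition~\ref{prop:locally-closed-GLn}.

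The step I expect to be the main obstacle is justifying the symmetric-power formula $\sum_{m}e(\sym^{m}X)\,t^{m}=\pexp(e(X)\,t)$ in precisely the form required: the appearance of $B_{l}^{r}(x^{h})$ in the $h$-th factor encodes the Adams operation $x\mapsto x^{h}$, and matching this to the geometry of the symmetric quotient $\sym^{h}$ depends on the compatibility of mixed Hodge structures with symmetric products (equivalently, on the special $\lambda$-ring structure of the Grothendieck ring under the Serre-polynomial motivic measure). Once it is established that this formal plethystic bookkeeping genuinely computes the $E$-polynomials of the geometric strata, rather than being a mere coincidence of generating functions, the remaining manipulations are routine rearrangements of the product expansion.
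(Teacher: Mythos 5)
Your proposal is correct, and it is essentially the argument behind the result: the paper itself offers no proof of Theorem \ref{thm:individual-strata} (it is quoted as \cite[Corollary 1.2]{FNZ}), and your derivation reconstructs exactly what that citation contains, namely the formal expansion of the plethystic exponential in Theorem \ref{thm:mainFNZ} term-by-term over index pairs $(l,h)$, refined stratum-by-stratum via the identification of each stratum with a product of symmetric products of irreducible loci together with Cheah's formula $\sum_{m}e(\sym^{m}X)\,t^{m}=\pexp\bigl(e(X)\,t\bigr)$. You correctly single out that last point as the only non-formal ingredient; the one small inaccuracy is that the identification $\mathcal{X}_{r}^{[m]}GL_{n}\cong\prod_{j}\sym^{m_{j}}(\mathcal{X}_{r}^{irr}GL_{j})$ is not Proposition \ref{prop:locally-closed-GLn} but rather \cite[Proposition 4.5]{FNZ}, which this paper invokes later (in the proof of Corollary \ref{cor:Euler}).
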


\subsection{Serre polynomials for $SL_{n}$ and $PGL_{n}$-character varieties
of the free group.}

Recall that, by Proposition \ref{prop:RPG-fibration} and Theorem
\ref{thm:Main}, $E$-polynomials of all strata for the $SL_{n}$
and $PGL_{n}$-character varieties of the free group of rank $r$
can be derived from the corresponding ones for $GL_{n}$, by dividing
out by $(x-1)^{r}$: 
\[
e(\mathcal{X}_{r}^{[k]}SL_{n})=e(\mathcal{X}_{r}^{[k]}PGL_{n})=\frac{e(\mathcal{X}_{r}^{[k]}GL_{n})}{(x-1)^{r}}\;,
\]
for every $[k]\in\mathcal{P}_{n}$, as for the whole character variety.
So, Theorem \ref{thm:individual-strata} allows the computation of
explicit formulae for all $E$-polynomials of $\mathcal{X}_{r}G$,
with $G=GL_{n}$, $SL_{n}$ or $PGL_{n}$.

As examples, this method recovers the polynomial $e(\mathcal{X}_{r}SL_{2})=e(\mathcal{X}_{r}PGL_{2})$
first obtained in \cite{CL}, and the polynomial $e(\mathcal{X}_{r}SL_{3})=e(\mathcal{X}_{r}PGL_{3})$
in \cite[Theorem 1]{LM} (compare also \cite{BH}). We illustrate
the method for $n=3$. 
\begin{thm}
\label{thm:GL3-free}For $s\geq0$, we have: 
\begin{align*}
e(\mathcal{X}_{s+1}SL_{3}) & ={\textstyle \frac{1}{2}}(x-1)^{s+1}(x+1)^{s}x+{\textstyle \frac{1}{3}}(x^{2}+x+1)^{s}x(x+1)+\\
 & +(x-1)^{2s}\Big((x+1)^{s}[x^{3s}(x^{2}+x+1)^{s}+x^{s+1}-2x^{3s}]+x^{3s}-x^{s+1}+{\textstyle \frac{x}{6}}(x+1)\Big)
\end{align*}
\end{thm}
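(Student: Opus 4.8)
The plan is to specialize the explicit formulae of Section~6 to the case $n=3$ and then pass from $GL_3$ to $SL_3$. By Proposition~\ref{prop:RPG-fibration} together with Theorem~\ref{thm:Main} one has $e(\mathcal{X}_r SL_3)=e(\mathcal{X}_r GL_3)/(x-1)^r$, so it is enough to compute $e(\mathcal{X}_r GL_3)$ and divide out by $(x-1)^r$ (with $r=s+1$). For the $GL_3$ polynomial I would apply the rectangular-partition formula of Theorem~\ref{thm:individual-strata}. The set $\mathcal{RP}_3$ has exactly five elements, namely $[(1\times1)^3]$, $[(1\times1)(1\times2)]$, $[(1\times1)(2\times1)]$, $[(1\times3)]$ and $[(3\times1)]$, and reading off the corresponding products gives
\[
e(\mathcal{X}_r GL_3)=\frac{B_1^r(x)^3}{6}+\frac{B_1^r(x)\,B_1^r(x^2)}{2}+B_1^r(x)\,B_2^r(x)+\frac{B_1^r(x^3)}{3}+B_3^r(x).
\]

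Next I would substitute the values from Lemma~\ref{lemma:Bn(x)}, using $B_1^r(x)=(x-1)^r$, $B_1^r(x^2)=(x^2-1)^r$ and $B_1^r(x^3)=(x^3-1)^r$. Dividing by $(x-1)^r$ and factoring $x^2-1=(x-1)(x+1)$, $x^3-1=(x-1)(x^2+x+1)$ turns the first, second and fourth summands into $\tfrac16(x-1)^{2r}$, $\tfrac12(x-1)^r(x+1)^r$ and $\tfrac13(x^2+x+1)^r$ respectively, while the remaining two summands become $B_2^r(x)$ and $B_3^r(x)/(x-1)^r$, for which the closed forms of Lemma~\ref{lemma:Bn(x)} (written in the variable $s=r-1$) are available. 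At this point the whole expression is a sum of monomials in the four building blocks $(x-1)^s$, $x^s$, $(x+1)^s$ and $(x^2+x+1)^s$ with polynomial coefficients in $x$, and the claimed formula is obtained purely by collecting like terms.

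The decisive simplifications, which I would carry out term by term, are the following. The contributions proportional to $x^{s}(x+1)^{s}$ coming from $B_2^r$ and from $B_3^r/(x-1)^r$ cancel, leaving only the $x^{s+1}(x+1)^{s}$ term that appears in the statement; similarly the bare $x^{s}$ pieces cancel against the bare $x^{s+1}$ term. The three purely polynomial contributions $\tfrac16(x-1)^{2}$, $\tfrac12(x-1)$ and $\tfrac13$, all multiplied by $(x-1)^{2s}$, combine to $\tfrac16 x(x+1)(x-1)^{2s}$, exactly the last summand inside the bracket. Finally the two degree-raised terms telescope: $\tfrac12(x-1)^{s+1}\big((x+1)^{s+1}-(x+1)^{s}\big)=\tfrac12 x(x-1)^{s+1}(x+1)^{s}$ and $\tfrac13\big((x^2+x+1)^{s+1}-(x^2+x+1)^{s}\big)=\tfrac13 x(x+1)(x^2+x+1)^{s}$, which are the two leading summands of the asserted expression. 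I expect the only real difficulty to be bookkeeping: keeping track of the powers of $(x-1)$ when one writes $(x-1)^{2s+1}=(x-1)(x-1)^{2s}$ and distributes $(x-1)x^s=x^{s+1}-x^s$, since it is precisely this redistribution that produces the cancellations above. No genuinely new idea is needed beyond the results already established; the content of the theorem is the explicit bookkeeping.
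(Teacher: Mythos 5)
Your proposal is correct and follows exactly the paper's own route: the five-term rectangular-partition decomposition of $e(\mathcal{X}_r GL_3)$ from Theorem \ref{thm:individual-strata}, substitution of the closed forms from Lemma \ref{lemma:Bn(x)}, and division by $(x-1)^r$ via Proposition \ref{prop:RPG-fibration} and Theorem \ref{thm:Main}. The only difference is that you carry out the cancellations explicitly (all of which check out), whereas the paper leaves that bookkeeping to the reader.
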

\begin{proof}
By Theorem \ref{thm:individual-strata}, we get for $GL_{3}$ 
\[
e(\mathcal{X}_{r}GL_{3})=B_{3}^{r}(x)+B_{2}^{r}(x)B_{1}^{r}(x)+\frac{B_{1}^{r}(x^{3})}{3}+\frac{B_{1}^{r}(x^{2})B_{1}^{r}(x)}{2}+\frac{B_{1}^{r}(x)^{3}}{6}
\]
where the first term corresponds to $e(\mathcal{X}_{r}^{irr}GL_{3})$,
the second to $e(\mathcal{X}_{r}^{[1\,2]}GL_{3})$ and remaining 3
terms to $e(\mathcal{X}_{r}^{[1^{3}]}GL_{3})$ (see \cite[Figure 4.1]{FNZ}
showing rectangular partitions for $n=3$). Replacing $B_{j}^{r}(x)$,
$j=1,2,3$, by the expressions in Lemma \ref{lemma:Bn(x)} we obtain
the result for $\mathcal{X}_{r}GL_{3}$, and the case of $SL_{3}$
follows immediately. 
\end{proof}
Our method allows explicit expressions for $n=4$ and beyond. In fact,
Theorem \ref{thm:individual-strata} provides the decomposition 
\begin{align}
e(\mathcal{X}_{r}GL_{4}) & =e(\mathcal{X}_{r}^{[4]}GL_{4})+e(\mathcal{X}_{r}^{[1\;3]}GL_{4})+e(\mathcal{X}_{r}^{[2^{2}]}GL_{4})+e(\mathcal{X}_{r}^{[1^{2}\;2]}GL_{4})+e(\mathcal{X}_{r}^{[1^{4}]}GL_{4})\nonumber \\
 & =B_{4}(x)+B_{3}(x)B_{1}(x)+\frac{B_{2}(x)^{2}}{2}+\frac{B_{2}(x^{2})}{2}+\frac{B_{2}(x)B_{1}(x^{2})}{2}+\frac{B_{2}(x)B_{1}(x)^{2}}{2}\nonumber \\
 & +\frac{B_{1}(x^{4})}{4}+\frac{B_{1}(x^{3})B_{1}(x)}{3}+\frac{B_{1}(x^{2})^{2}}{8}+\frac{B_{1}(x^{2})B_{1}(x)^{2}}{4}+\frac{B_{1}(x)^{4}}{24}\;,\label{eq:PGL_4}
\end{align}
as the sum of the 5 strata (which comprise the $11$ terms coming
from the rectangular partitions in \cite[Figure 4.2]{FNZ}). Combining
(\ref{eq:PGL_4}) with formulae in Lemma \ref{lemma:Bn(x)} yields
the computation of the $E$-polynomial for $SL_{4}$ (and then also
$PGL_{4}$). This formula is new. 
\begin{thm}
The E-polynomial of the $SL_{4}$-character variety of $F_{s+1}$
is: 
\begin{eqnarray*}
e(\mathcal{X}_{s+1}SL_{4}) & = & (x-1)^{3s+1}\big[(x+1)^{2s}\frac{x^{2s}}{2}+(x+1)^{s}\big(x^{3s}(x^{2}+x+1)^{s}-2x^{3s}-x^{2s}+\frac{3x^{s}}{2}\big)\big]\\
 & + & (x-1)^{3s+1}\big[x^{3s}+\frac{x^{2s}}{2}-\frac{3x^{s}}{2}+\frac{11}{24}\big]+\frac{1}{24}(x-1)^{3s+3}\\
 & + & (x-1)^{3s}(x+1)^{2s}(-x^{6s}+\frac{x^{2s}}{2})\\
 & + & (x-1)^{3s}(x+1)^{s}x^{6s}\big[(x^{2}+x+1)^{s}(x^{3}+x^{2}+x+1)^{s}-2(x^{2}+x+1)^{s}+3\big]\\
 & + & (x-1)^{3s}(x+1)^{s}\big[x^{3s}\big((x^{2}+x+1)^{s}-2\big)-x^{2s}+\frac{x^{s}}{2}\big]\\
 & + & (x-1)^{3s}(-x^{6s}+x^{3s}+\frac{x^{2s}}{2}-\frac{x^{s}}{2}+\frac{1}{2})\\
 & + & (x-1)^{2s+2}\frac{(x-1)^{s+1}}{4}\\
 & + & (x-1)^{2s+1}\frac{(x+1)^{s}}{2}(-(x+1)^{s}x^{s}+x^{s}-\frac{1}{2})\\
 & + & (x-1)^{2s}(x+1)^{s}\frac{x^{s}}{2}(1-(x+1)^{s})\\
 & + & (x-1)^{s+1}\big[(x+1)\frac{x}{3}(x^{2}+x+1)^{s}+\frac{(x+1)^{2s}}{8}(x^{2}+2x+2)\big]\\
 & + & (x-1)^{s}(x+1)^{2s}\big[\frac{x^{2s+1}}{2}((x^{2}+1)^{s}-1)+\frac{x-1}{4}\big]\\
 & - & \frac{1}{4}(x+1)^{s+1}(x^{2}+1)^{s}+\frac{1}{4}(x^{3}+x^{2}+x+1)^{s+1}\;.
\end{eqnarray*}

\end{thm}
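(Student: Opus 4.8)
The plan is to derive the formula purely by specializing the machinery already in place, with no new geometric input. By Theorem \ref{thm:Main} we have $e(\mathcal{X}_{s+1}SL_4)=\sum_{[k]\in\mathcal{P}_4}e(\mathcal{X}_{s+1}^{[k]}SL_4)$, and by Theorem \ref{thm:SL-2strata}, Proposition \ref{prop:RPG-fibration} (together with Theorem \ref{thm:Main} to cover the irreducible stratum $[4]$) each stratum satisfies $e(\mathcal{X}_{s+1}^{[k]}SL_4)=e(\mathcal{X}_{s+1}^{[k]}GL_4)/(x-1)^{s+1}$, where $r=s+1$. Hence it suffices to make the right-hand side of \eqref{eq:PGL_4} completely explicit and then divide by $(x-1)^{s+1}$. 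So the whole proof reduces to a substitution followed by an exact division.

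First I would record the substituted building blocks coming from Lemma \ref{lemma:Bn(x)}. Since $B_1^r(x)=(x-1)^{s+1}$, the purely abelian ingredients in \eqref{eq:PGL_4} are $B_1^r(x^h)=(x^h-1)^{s+1}$ for $h=1,2,3,4$, each already carrying a factor $(x-1)^{s+1}$. The remaining ingredients are $B_2^r(x)$ and its dilate $B_2^r(x^2)$, the latter obtained by the substitution $x\mapsto x^2$ in the $B_2$ line of Lemma \ref{lemma:Bn(x)}, together with $B_3^r(x)$ and $B_4^r(x)$ read off directly from the same Lemma. Inserting all of these into the eleven terms of \eqref{eq:PGL_4}, which by Theorem \ref{thm:individual-strata} enumerate the rectangular partitions of $4$, produces a closed form for $e(\mathcal{X}_{s+1}GL_4)$.

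The final step is the division by $(x-1)^{s+1}$. That this division is exact is guaranteed abstractly by Theorem \ref{thm:SL-2strata} and Proposition \ref{prop:RPG-fibration}, which show each stratum contribution $e(\mathcal{X}_{s+1}^{[k]}GL_4)$ is divisible by $(x-1)^{s+1}$; hence no delicate cancellation is required, only correct bookkeeping of the powers of $(x-1)$ carried by each $B_n^r$ and its dilates. The appearance in the displayed answer of blocks with leading factors $(x-1)^{3s+i}$, $(x-1)^{2s+i}$, $(x-1)^{s+i}$, and a factor-free block, merely records the distinct total powers of $(x-1)$ that survive once the substituted products $B_3^rB_1^r$, $(B_2^r)^2$, $B_2^rB_1^r(x^2)$ and the quartic abelian terms are expanded and a single $(x-1)^{s+1}$ is stripped off.

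I expect the only real obstacle to be the algebraic bookkeeping. The substituted expressions involve several cyclotomic-type factors, namely $(x+1)^s$, $(x^2+x+1)^s$, $(x^2+1)^s$ and $(x^3+x^2+x+1)^s$, which must be kept separated throughout the expansion and then correctly recombined after the division; in particular the highest block $(x-1)^{3s}$ arises from the cross-terms of the inner $(x-1)^{2s}$ factors inside $B_4^r$ and $B_3^rB_1^r$, so care is needed there. Since all structural inputs are already established, the computation is mechanical but lengthy, and the verification amounts to checking that the collected coefficients match the displayed formula line by line.
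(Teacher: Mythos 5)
Your proposal is correct and follows essentially the same route as the paper: the paper likewise obtains the formula by substituting the explicit expressions of Lemma \ref{lemma:Bn(x)} into the eleven-term decomposition \eqref{eq:PGL_4} given by Theorem \ref{thm:individual-strata}, and then dividing by $(x-1)^{s+1}$, with exactness of that division justified stratum by stratum via Theorem \ref{thm:SL-2strata}, Proposition \ref{prop:RPG-fibration} and Theorem \ref{thm:Main}. The only remaining content in either version is the mechanical algebraic bookkeeping you describe.
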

Formulas for all $n$ can be obtained in exactly the same way, from
the combinatorics of rectangular partitions, and the formula for $B_{n}$
in Proposition \eqref{prop:Bnformula}.

\subsection{Irreducibility and Euler characteristics}

It is clear that the above method, combining Proposition \ref{prop:Bnformula},
Lemma \ref{lemma:Bn(x)} and Theorem \ref{thm:individual-strata},
provides the same kind of expressions for $e(\mathcal{X}_{r}SL_{n})=e(\mathcal{X}_{r}PGL_{n})$,
and for every $n\in\mathbb{N}$. Additionally, we can prove irreducibility
and compute all Euler characteristics of $\mathcal{X}_{r}^{[k]}G$
for $G=GL_{n}$, $SL_{n}$ and $PGL_{n}$, and all $[k]\in\mathcal{P}_{n}$.
We start with the $GL_{n}$ case. 
\begin{lem}
\label{dimirr} The degree of the polynomial $B_{n}^{r}(x)$ is 
\[
\deg(B_{n}^{r}(x))=n^{2}(r-1)+1.
\]
\end{lem}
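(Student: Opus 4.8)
The plan is to read the leading term directly off the closed formula in Proposition \ref{prop:Bnformula}. The one nontrivial ingredient is the degree and leading coefficient of the auxiliary polynomials $b_n(x)$ determined by $F^{-1}(t)=1+\sum_{n\geq1}b_n t^n$, with $F$ as in \eqref{eq:F(t)}. First I would show, by induction on $n$ and for $r\geq2$, that $b_n(x)$ has degree $(r-1)\binom{n+1}{2}$ with leading coefficient $-1$. Writing $a_k(x)=\big((x-1)(x^2-1)\cdots(x^k-1)\big)^{r-1}$, each $a_k$ is monic of degree $(r-1)\binom{k+1}{2}$, and the inversion recursion \eqref{eq:recursive} reads $b_n=-\sum_{k=1}^n a_k b_{n-k}$. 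For $1\leq k\leq n-1$ the product $a_k b_{n-k}$ has degree $(r-1)\big(\binom{k+1}{2}+\binom{n-k+1}{2}\big)$, which is \emph{strictly} below $(r-1)\binom{n+1}{2}$ precisely because $k^2+(n-k)^2<n^2$. Hence the top term of $b_n$ comes solely from $-a_n b_0=-a_n$, which is monic, proving the claim. (The case $n=1$ is immediate from $B_1^r=(x-1)^r$.)

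Next I substitute this into Proposition \ref{prop:Bnformula}. Using the elementary identity $\binom{j+1}{2}+\binom{j}{2}=j^2$, the summand indexed by a divisor $d\mid n$ and a partition $[k]\in\mathcal{P}_d$ has degree
\[
1+\frac{n(r-1)}{d}\sum_{j=1}^{d}k_j\,j^2,
\]
where the leading $1$ comes from the prefactor $(x-1)$. Under the constraint $\sum_j j\,k_j=d$ one has $\sum_j k_j j^2\leq d\cdot\max\{j:k_j>0\}\leq d^2$, with equality if and only if $[k]$ is the single part partition $[d]$; and $\frac{n}{d}\,d^2=nd\leq n^2$ with equality if and only if $d=n$. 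Therefore the degree is maximized \emph{uniquely} by the term with $d=n$ and $[k]=[n]$, and that maximal degree equals $1+n^2(r-1)$.

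Because this dominant term is unique, no cancellation can occur, so it determines the top coefficient. Explicitly, the $(d,[k])=(n,[n])$ term equals $-(x-1)\,b_n(x)\,x^{(r-1)\binom{n}{2}}$, whose leading coefficient is $(-1)\cdot1\cdot(-1)=+1$ (the first $-1$ from $(-1)^{|[k]|}/|[k]|$, the second from the leading coefficient of $b_n$), and whose degree is $1+(r-1)\binom{n+1}{2}+(r-1)\binom{n}{2}=1+n^2(r-1)$. This yields $\deg B_n^r=n^2(r-1)+1$. The step I expect to be the main obstacle is establishing the \emph{uniqueness} of the maximal-degree term through the double optimization over $d\mid n$ and over $\mathcal{P}_d$; once uniqueness is in hand, the nonvanishing of the leading coefficient is automatic.

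As a conceptual sanity check, this degree coincides with the complex dimension of the smooth variety $\mathcal{X}_r^{irr}GL_n$: the irreducible locus is open in $GL_n^r$, of dimension $r\,n^2$, and $PGL_n$ acts freely on it (the stabilizer of a good representation is the center), so $\dim\mathcal{X}_r^{irr}GL_n=r\,n^2-(n^2-1)=n^2(r-1)+1$. Since the $E$-polynomial of a nonempty smooth variety has $x$-degree equal to its dimension (the only contribution to the top power of $x$ is the fundamental class in top compactly supported cohomology, which cannot cancel), this geometric count agrees with the combinatorial computation above.
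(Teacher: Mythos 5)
Your proof is correct and follows essentially the same route as the paper: reading the degree off Proposition \ref{prop:Bnformula}, showing $\deg b_j(x)=(r-1)\binom{j+1}{2}$ with leading coefficient $-1$ via the inversion recursion \eqref{eq:recursive}, and maximizing $1+\frac{n(r-1)}{d}\sum_{j}k_j j^2$ over divisors $d\mid n$ and partitions $[k]\in\mathcal{P}_d$, with the maximum attained at $d=n$, $[k]=[n]$. Your explicit check that this maximizer is \emph{unique} (so no cancellation of top terms can occur) and that the resulting leading coefficient is $+1$ is a welcome tightening of a point the paper's proof of the Lemma passes over quickly and only settles later, in the proof of Corollary \ref{cor:Euler}.
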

\begin{proof}
From Proposition \ref{prop:Bnformula}, $\deg B_{n}^{r}(x)$ will
be 
\begin{equation}
1+\max_{d|n}\max_{[k]\in\mathcal{P}_{d}}\deg\left(\prod_{j=1}^{d}b_{j}(x^{n/d})^{k_{j}}x^{\frac{n(r-1)k_{j}}{d}\binom{j}{2}}\right),\label{eq:degree}
\end{equation}
where $b_{n}(x)$ are defined by $(1+\sum_{n\geq1}a_{n}(x)\,t^{n})(1+\sum_{n\geq1}b_{n}(x)\,t^{n})=1$,
with $a_{n}(x)=\big((x-1)(x^{2}-1)\ldots(x^{n}-1)\big)^{r-1}$, so
that $\deg a_{i}(x)=\frac{i(i+1)}{2}(r-1)$. By the recursive definition
of $b_{j}(x)$ in terms of the $a_{i}(x)$ (see Equations \eqref{eq:recursive}),
it is easy to see that $b_{j}(x)=-a_{j}(x)+\mbox{lower degree terms}$,
hence $\deg b_{j}(x)=\deg a_{j}(x)=\binom{j+1}{2}(r-1)$. Now, the
maximum in \eqref{eq:degree} is achieved for $d=n$ because partitions
of smaller $n$ have lower degree. Hence: 
\begin{eqnarray*}
\deg B_{n}^{r}(x) & = & 1+\max_{[k]\in\mathcal{P}_{n}}\deg\prod_{j=1}^{n}b_{j}(x)^{k_{j}}x^{(r-1)k_{j}\binom{j}{2}}\\
 & = & {\textstyle 1+\max_{[k]\in\mathcal{P}_{n}}\sum_{j=1}^{n}\Big(\binom{j+1}{2}(r-1)k_{j}+(r-1)k_{j}\binom{j}{2}\Big)}\\
 & = & {\textstyle 1+\max_{[k]\in\mathcal{P}_{n}}\sum_{j=1}^{n}k_{j}j^{2}(r-1)}\;.
\end{eqnarray*}
Using the restriction $n=\sum_{j=1}^{n}j\,k_{j}$, it is clear that
the maximum is achieved for the partition $[n]$, i.e. all $k_{j}=0$
except $k_{n}=1$, so that $\deg B_{n}^{r}(x)=n^{2}(r-1)+1$ as wanted.\end{proof}
\begin{cor}
\label{cor:Euler}Every $[k]$-polystable stratum $\mathcal{X}_{r}^{[k]}GL_{n}$
is an irreducible algebraic variety, and has zero Euler characteristic.\end{cor}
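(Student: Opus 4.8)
The plan is to reduce everything to the irreducible strata and then combine two ingredients: the product/symmetric-product description of a general stratum, and the divisibility of $B_n^r(x)$ by $(x-1)$. First I would record the geometric shape of a stratum. For $[k]=[1^{k_1}\cdots n^{k_n}]\in\mathcal{P}_n$, a $[k]$-polystable class is an \emph{unordered} direct sum of $k_j$ irreducible classes of size $j$, so (exactly as in the identification $\mathsf{X}_r^{\mathbf n}/S_{\mathbf n}\cong\mathcal{X}_r^{[k]}GL_n$ used in the proof of Theorem~\ref{thm:SL-2strata}) there is an isomorphism of varieties
\[
\mathcal{X}_r^{[k]}GL_n\;\cong\;\prod_{j=1}^n \sym^{k_j}(\mathcal{X}_r^{irr}GL_j),\qquad \sym^{k_j}(Y):=Y^{k_j}/S_{k_j}.
\]

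For irreducibility it then suffices to prove that each factor $\mathcal{X}_r^{irr}GL_j$ is irreducible, since a finite quotient of an irreducible variety is irreducible (the symmetric products) and a finite product of irreducible varieties is irreducible. Now $\mathcal{R}_r^{irr}GL_j=\hom^{irr}(F_r,GL_j)$ is a Zariski-open subset of the irreducible variety $GL_j^{\,r}$; hence, when nonempty, it is irreducible, and $\mathcal{X}_r^{irr}GL_j$, being its image under the surjective quotient map, is irreducible as well. Alternatively, and more in the spirit of the preceding lemma: by Lemma~\ref{dimirr} the polynomial $B_n^r(x)=e(\mathcal{X}_r^{irr}GL_n)$ has degree $n^2(r-1)+1=\dim\mathcal{X}_r^{irr}GL_n$ and leading coefficient $1$ (from the computation $b_j=-a_j+\text{lower order}$ in that proof); since $\mathcal{X}_r^{irr}GL_n$ is smooth by Lemma~\ref{lem:Schur}, the coefficient of the top power of $x$ counts its connected components, so it is connected, hence irreducible.

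For the Euler characteristic, the key observation is that $(x-1)\mid B_n^r(x)$ for every $n\ge 1$, which is explicit in the formula of Proposition~\ref{prop:Bnformula}. Consequently $B_l^r(x^h)$ is divisible by $(x^h-1)$ and so vanishes at $x=1$, for all $l,h\ge 1$. Evaluating the formula of Theorem~\ref{thm:individual-strata},
\[
e(\mathcal{X}_r^{[k]}GL_n)=\sum_{[[k']]\in\pi^{-1}[k]}\ \prod_{l,h=1}^n \frac{B_l^r(x^h)^{k'_{l,h}}}{k'_{l,h}!\,h^{k'_{l,h}}},
\]
at $x=1$: every rectangular partition $[[k']]$ with $\pi[[k']]=[k]$ has total area $\sum_{l,h} l\,h\,k'_{l,h}=n\ge 1$, so at least one $k'_{l,h}\ge 1$, whence each summand carries a factor $B_l^r(x^h)$ that vanishes at $x=1$. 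Therefore $\chi(\mathcal{X}_r^{[k]}GL_n)=e(\mathcal{X}_r^{[k]}GL_n)\big|_{x=1}=0$. (Equivalently, $\chi(\mathcal{X}_r^{irr}GL_j)=B_j^r(1)=0$ together with the standard identity $\sum_{k\ge0}\chi(\sym^k Y)\,t^k=(1-t)^{-\chi(Y)}$ forces $\chi(\sym^{k_j}(\mathcal{X}_r^{irr}GL_j))=0$ whenever $k_j\ge1$, and the product of the factors is then $0$.)

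I expect the main obstacle to be the irreducibility statement rather than the Euler characteristic. The delicate points are verifying that the GIT quotient of the open irreducible locus genuinely recovers the stratum with the product/symmetric-product structure above, and making precise the passage from ``leading coefficient $1$'' to irreducibility (i.e. smoothness so that connectedness equals irreducibility, or equidimensionality so that a single top-dimensional component suffices). One should also flag the degenerate small cases, such as $r=1$ with $n\ge 2$, where $\mathcal{R}_r^{irr}GL_n$ is empty and the statement must be read for nonempty strata. The divisibility $(x-1)\mid B_n^r(x)$, by contrast, is immediate from the explicit formula and drives the vanishing of \emph{all} the Euler characteristics at once.
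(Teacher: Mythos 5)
Your proof is correct, and the Euler-characteristic half is exactly the paper's argument: substitute $x=1$ into Theorem \ref{thm:individual-strata} and use the explicit factor $(x-1)$ in $B_{n}^{r}(x)$ from Proposition \ref{prop:Bnformula}, so that every summand attached to a rectangular partition vanishes at $x=1$. Your reduction of a general stratum to symmetric products of irreducible strata is also the step the paper takes, citing \cite[Proposition 4.5]{FNZ} for $\mathcal{X}_{r}^{[k]}GL_{n}\cong\prod_{j}\sym^{k_{j}}(\mathcal{X}_{r}^{irr}GL_{j})$. Where you genuinely diverge is the irreducibility of $\mathcal{X}_{r}^{irr}GL_{n}$ itself: the paper extracts the top-degree monomial of $B_{n}^{r}(x)$ from the proof of Lemma \ref{dimirr}, observes that its leading coefficient is $1$, and concludes irreducibility, the implicit justification being that this coefficient counts the top-dimensional irreducible components; your primary argument is instead purely geometric --- $\mathcal{R}_{r}^{irr}GL_{n}$ is Zariski open in the irreducible variety $GL_{n}^{\,r}$, hence irreducible when nonempty, and so is its image under the surjective quotient map. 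Your route is more elementary and in fact tighter: the leading-coefficient criterion only sees components of maximal dimension, so to deduce irreducibility one also needs equidimensionality (or connectedness) of the stratum, a point the paper leaves implicit and which your geometric argument supplies for free; note that your own alternative phrasing, that for a smooth variety the top coefficient ``counts connected components,'' suffers from the same imprecision (lower-dimensional components are invisible to it), so the open-in-irreducible argument should remain the primary one. Your caveat about empty strata (e.g.\ $r=1$, $n\geq2$) is also well taken, since Proposition \ref{prop:Bnformula} is stated only for $r,n\geq2$ and the paper does not address this degenerate case.
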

\begin{proof}
The irreducible stratum corresponds to the partition $[n]$ and its
polynomial is given by $e(\mathcal{X}_{r}^{irr}GL_{n})=B_{n}^{r}(x)$
(see Theorem \ref{thm:individual-strata} and Remark \ref{rem:ab-irr-strata}).
The monomial of top degree of $B_{n}^{r}(x)$ is, by the proof of
Lemma \ref{dimirr}, 
\[
(x-1)\frac{\mu(1)}{1}\frac{(-1)^{1}}{1}\binom{1}{1}b_{n}(x)^{1}x^{(r-1)\cdot1\cdot\binom{n}{2}}=(x-1)a_{n}(x)x^{(r-1)\binom{n}{2}}\;,
\]
whose leading coefficient equals the leading coefficient of $a_{n}(x)$,
which is $1$. Therefore, $\mathcal{X}_{r}^{irr}GL_{n}$ is an irreducible
variety. All polystable strata can be expressed as symmetric products
of irreducible strata of lower dimension (see \cite[Proposition 4.5]{FNZ}),
hence all of them are irreducible. The statement about Euler characteristics
follows by substituting $x=1$ in Theorem \ref{thm:individual-strata},
since $B_{n}^{r}(1)=0$, for all $n,r\in\mathbb{N}$ (see Proposition
\ref{prop:Bnformula} and Lemma \ref{lemma:Bn(x)}). 
\end{proof}
The next Corollary is immediate from our constructions. 
\begin{cor}
For every $[k]\in\mathcal{P}_{n}$, the strata $\mathcal{X}_{r}^{[k]}SL_{n}$
and $\mathcal{X}_{r}^{[k]}PGL_{n}$ are irreducible algebraic varieties. 
\end{cor}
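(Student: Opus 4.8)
The plan is to reduce everything to the irreducibility of the $GL_n$-strata from Corollary \ref{cor:Euler}, treating the adjoint and special-linear cases separately. For $PGL_n$ the claim is immediate: by definition $\mathcal{X}_r^{[k]}PGL_n=\mathcal{X}_r^{[k]}GL_n/(\mathbb{C}^*)^r$ is the image of the quotient morphism out of $\mathcal{X}_r^{[k]}GL_n$, and the image of an irreducible variety under a morphism is irreducible (a morphism is continuous and the continuous image of an irreducible space is irreducible); since $\mathcal{X}_r^{[k]}GL_n$ is irreducible by Corollary \ref{cor:Euler}, so is $\mathcal{X}_r^{[k]}PGL_n$.

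For $SL_n$ I would use the description $\mathcal{X}_r^{[k]}SL_n\cong S\mathsf{X}_r^{\mathbf{n}}/S_{\mathbf{n}}$ recorded in the proof of Theorem \ref{thm:SL-2strata} (valid for every $\mathbf{n}\in Q_n$ with $p(\mathbf{n})=[k]$), so that it suffices to prove the determinant-one slice $S\mathsf{X}_r^{\mathbf{n}}=\{(\rho_1,\cdots,\rho_s)\in\mathsf{X}_r^{\mathbf{n}}:\prod_i\det\rho_i=1\}$ is irreducible; the finite quotient by $S_{\mathbf{n}}$, being a surjective morphism, then preserves irreducibility. When the length is $s>1$, I would invoke the special (hence Zariski-locally trivial) principal bundle $H\to S\mathsf{X}_r^{\mathbf{n}}\to S\mathsf{X}_r^{\mathbf{n}}/H=\mathsf{X}_r^{\mathbf{n}}/J$ built in that proof, with torus fiber $H\cong(\mathbb{C}^*)^{r(s-1)}$. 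Its base is the image of the irreducible product $\mathsf{X}_r^{\mathbf{n}}=\prod_i\mathcal{X}_r^{irr}GL_{n_i}$ under the quotient by $J$, hence irreducible, and the fiber is an irreducible torus; a Zariski-locally trivial bundle with irreducible base and irreducible fiber has irreducible total space (cover the base by trivializing opens, whose preimages are irreducible and pairwise meet), so $S\mathsf{X}_r^{\mathbf{n}}$ is irreducible.

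The remaining case is the irreducible stratum $[k]=[n]$ (length $s=1$), where $S\mathsf{X}_r^{\mathbf{n}}=\mathcal{X}_r^{irr}SL_n$ and the torus $H$ degenerates to a point, so the bundle argument no longer applies. Here I would argue directly on representation varieties: $\mathcal{R}_r^{irr}SL_n=\hom^{irr}(F_r,SL_n)$ is a nonempty (for $n,r\ge2$) Zariski-open subset of $SL_n^r$, which is irreducible as a product of copies of the connected group $SL_n$; a nonempty open subset of an irreducible variety is irreducible, and $\mathcal{X}_r^{irr}SL_n=\mathcal{R}_r^{irr}SL_n/SL_n$ is its image under the geometric quotient, hence irreducible. (For $n\ge2$, $r=1$ the open set is empty and the stratum is vacuous, while $n=1$ is trivial.)

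The main obstacle is precisely crossing the determinant condition in the $SL_n$ case: unlike for $PGL_n$, the stratum $\mathcal{X}_r^{[k]}SL_n$ is a closed slice of the irreducible $GL_n$-stratum rather than a quotient of it, and closed subvarieties of irreducible varieties can be reducible; moreover the finite cover $\mathcal{X}_r^{irr}SL_n\to\mathcal{X}_r^{irr}PGL_n$ runs the wrong way to transport irreducibility from the adjoint side. Realizing the slice $S\mathsf{X}_r^{\mathbf{n}}$ as a torus bundle circumvents this, but only when the connecting torus is positive-dimensional, which is exactly why the irreducible stratum ($s=1$) must be handled by the separate Zariski-open argument above.
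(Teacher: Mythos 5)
Your proof is correct. Note, though, what the paper actually does here: it offers no argument at all, saying only that the corollary ``is immediate from our constructions,'' and in context --- immediately after Corollary \ref{cor:Euler}, whose proof establishes irreducibility of the $GL_{n}$-strata by showing their $E$-polynomials are monic --- the intended one-line argument is most plausibly numerical: $e(\mathcal{X}_{r}^{[k]}SL_{n})=e(\mathcal{X}_{r}^{[k]}PGL_{n})=e(\mathcal{X}_{r}^{[k]}GL_{n})/(x-1)^{r}$ is again monic, and ``monic $E$-polynomial'' is taken to imply irreducibility, exactly as in the $GL_{n}$ case. Your route is genuinely different: purely geometric, with a three-case analysis (images of irreducible varieties under morphisms for $PGL_{n}$; the special torus fibration and finite quotient from Theorem \ref{thm:SL-2strata} for $SL_{n}$-strata of length $s>1$; openness of $\mathcal{R}_{r}^{irr}SL_{n}$ in the irreducible variety $SL_{n}^{r}$ for the remaining stratum). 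What this buys is rigor: the monicity principle, taken literally, only shows there is a unique irreducible component of \emph{top} dimension --- a monic $E$-polynomial does not exclude additional lower-dimensional components --- so the paper's shortcut needs supplementing, whereas images, Zariski-locally trivial bundles, and finite quotients of irreducible varieties are irreducible outright; moreover, your open-subset argument for the irreducible stratum is self-contained and would equally well replace the monicity argument for $\mathcal{X}_{r}^{irr}GL_{n}$ itself. Two dependencies are worth keeping in mind: your inputs that $\mathcal{X}_{r}^{[k]}GL_{n}$ and the factors $\mathcal{X}_{r}^{irr}GL_{n_{i}}$ are irreducible still come from Corollary \ref{cor:Euler} (though they too could be rederived by your openness argument), and the length $s>1$ case borrows the identifications $\mathcal{X}_{r}^{[k]}SL_{n}\cong S\mathsf{X}_{r}^{\mathbf{n}}/S_{\mathbf{n}}$ and $S\mathsf{X}_{r}^{\mathbf{n}}/H=\mathsf{X}_{r}^{\mathbf{n}}/J$ verbatim from Theorem \ref{thm:SL-2strata}, so it is exactly as solid as that theorem --- no more, no less. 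Finally, you correctly isolate the real difficulty, which the paper's one-sentence proof passes over: the determinant slice is a closed subvariety rather than a quotient of the $GL_{n}$-stratum, and the finite cover $\mathcal{X}_{r}^{irr}SL_{n}\to\mathcal{X}_{r}^{irr}PGL_{n}$ transports irreducibility in the wrong direction.
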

Finally, we compute the Euler characteristics of the $SL_{n}$ and
$PGL_{n}$-character varieties of the free group. It turns out that
the only strata contributing are of the form $[d^{n/d}]\in\mathcal{P}_{n}$,
indexed by the divisors $d$ of $n$. 
\begin{prop}
\label{prop:euler} The Euler characteristics of the $SL_{n}$ and
$PGL_{n}$ character varieties of the free group are 
\[
\chi(\mathcal{X}_{r}SL_{n})=\chi(\mathcal{X}_{r}PGL_{n})=\varphi(n)n^{r-2}
\]
where $\varphi(n)$ is Euler's totient function. The Euler characteristics
for the strata of the form $[d^{n/d}]\in\mathcal{P}_{n}$ are 
\[
\chi(\mathcal{X}_{r}^{[d^{n/d}]}SL_{n})=\chi(\mathcal{X}_{r}^{[d^{n/d}]}PGL_{n})=\frac{\mu(d)}{d}n^{r-1}\;,
\]
otherwise $\chi(\mathcal{X}_{r}^{[k]}SL_{n})=0$, where $\mu(n)$
is the arithmetic Möbius function. \end{prop}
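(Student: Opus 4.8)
The plan is to reduce the whole computation to the single–variable Serre polynomials already computed and to a careful analysis of their behaviour at $x=1$, since the Euler characteristic is recovered as $\chi(X)=E(X;1,1)=e(X)|_{x=1}$ (Section 2). By Proposition \ref{prop:RPG-fibration} together with Theorem \ref{thm:Main} one has, for every partition, $e(\mathcal{X}_r^{[k]}SL_n)=e(\mathcal{X}_r^{[k]}PGL_n)=e(\mathcal{X}_r^{[k]}GL_n)/(x-1)^r$; in particular the two Euler characteristics agree stratum by stratum, so it suffices to treat the $SL_n$ case, where I may compute
\[
\chi(\mathcal{X}_r^{[k]}SL_n)=\lim_{x\to1}\frac{e(\mathcal{X}_r^{[k]}GL_n)}{(x-1)^r}.
\]
Everything then rests on the order of vanishing and leading coefficient of the irreducible polynomials $B_n^r(x)$ at $x=1$.

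\textbf{Key lemma.} First I would show that, for $r\ge 2$, the polynomial $B_n^r(x)$ vanishes at $x=1$ to order at least $r$, with order exactly $r$ if and only if $n$ is squarefree, in which case $B_n^r(x)=\mu(n)\,n^{r-2}(x-1)^r+O((x-1)^{r+1})$ (and to order at least $2r-1$ otherwise). To prove this I use Proposition \ref{prop:Bnformula}. Writing $a_i(x)=\big((x-1)\cdots(x^i-1)\big)^{r-1}$, each factor $x^m-1$ has a simple zero at $x=1$, so $\mathrm{ord}_{x=1}a_i=i(r-1)$; from the recursion $b_j=-\sum_{i=1}^j a_i b_{j-i}$ an immediate induction gives $\mathrm{ord}_{x=1}b_j\ge j(r-1)$. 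Hence in $B_n^r(x)/(x-1)=\sum_{d\mid n}\frac{\mu(n/d)}{n/d}\,T_d(x)$ each term $T_d$, built from products $\prod_j b_j(x^{n/d})^{k_j}$ with $\sum_j jk_j=d$, vanishes to order at least $d(r-1)$. The minimal order $r-1$ can therefore only come from $d=1$, whose term equals $\frac{\mu(n)}{n}(x^n-1)^{r-1}\sim \mu(n)\,n^{r-2}(x-1)^{r-1}$, while all $d\ge2$ contribute order $\ge 2(r-1)>r-1$. If $n$ is squarefree the $d=1$ term survives ($\mu(n)\ne0$) and produces the claimed leading term; if $n$ is not squarefree then $\mu(n)=0$ annihilates it, leaving only $d\ge2$ and forcing order $\ge 2r-1$.

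\textbf{Assembling the strata.} Next I feed this into Theorem \ref{thm:individual-strata}, which expresses $e(\mathcal{X}_r^{[m]}GL_n)$ as a sum over rectangular partitions $[[k]]\in\pi^{-1}[m]$ of $\prod_{l,h}B_l^r(x^h)^{k_{l,h}}/(k_{l,h}!\,h^{k_{l,h}})$. Since $x^h-1\sim h(x-1)$, each factor $B_l^r(x^h)$ vanishes to the same order $v(l):=\mathrm{ord}_{x=1}B_l^r\ge r$ as $B_l^r$, so a rectangular partition with $\sum_{l,h}k_{l,h}$ rectangles contributes order $\ge r\sum_{l,h}k_{l,h}$. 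Thus the limit is nonzero only for terms with a \emph{single} rectangle of squarefree height; a single rectangle $l\times h$ with $lh=n$ maps under $\pi$ to the partition $[l^h]$. This forces $\chi(\mathcal{X}_r^{[k]}SL_n)=0$ unless $[k]=[d^{n/d}]$ for a divisor $d\mid n$, and in that case only the rectangle $d\times(n/d)$ survives, giving
\[
\chi(\mathcal{X}_r^{[d^{n/d}]}SL_n)=\lim_{x\to1}\frac{B_d^r(x^{n/d})}{(n/d)\,(x-1)^r}=\mu(d)\,d^{r-2}(n/d)^{r-1}=\frac{\mu(d)}{d}\,n^{r-1},
\]
which is automatically $0$ when $d$ is not squarefree. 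Summing over all strata and using $\sum_{d\mid n}\mu(d)/d=\varphi(n)/n$ yields $\chi(\mathcal{X}_r SL_n)=n^{r-1}\sum_{d\mid n}\mu(d)/d=\varphi(n)\,n^{r-2}$.

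\textbf{Main obstacle.} The delicate step is the key lemma: the formula of Proposition \ref{prop:Bnformula} is a sum of many terms of a priori comparable order, and one must argue that, at the minimal order $r-1$ of $B_n^r/(x-1)$, only the single $d=1$ term contributes. The estimates $\mathrm{ord}_{x=1}a_i=i(r-1)$ and $\mathrm{ord}_{x=1}b_j\ge j(r-1)$ are exactly what isolate this term, make its coefficient transparently $\mu(n)\,n^{r-2}$, and simultaneously guarantee the higher vanishing ($\ge 2r-1$) in the non-squarefree case; the remaining work is the bookkeeping of divisors together with the Möbius–totient identity.
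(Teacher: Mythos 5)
Your proposal is correct and follows essentially the same route as the paper's own proof: reduce to evaluating $e(\mathcal{X}_r^{[k]}GL_n)/(x-1)^r$ at $x=1$, use the recursion $b_j=-\sum_i a_i b_{j-i}$ to get $\mathrm{ord}_{x=1}b_j\geq j(r-1)$ so that only the $d=1$ term $\frac{\mu(n)}{n}(x^n-1)^{r-1}$ of Proposition \ref{prop:Bnformula} survives (giving $\mu(n)n^{r-2}$, and the implicit hypothesis $r\geq 2$ is needed in both arguments), then observe via Theorem \ref{thm:individual-strata} that only single-rectangle terms $B_d^r(x^{n/d})/(n/d)$ contribute, and finish with $\sum_{d\mid n}\mu(d)/d=\varphi(n)/n$. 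The only cosmetic slip is calling the squarefree parameter the rectangle's ``height'': the relevant condition is squarefreeness of $l$ (the block size $d$), exactly as your final formula uses.
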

\begin{proof}
By Theorems \ref{thm:SL-2strata} and \ref{thm:equality-irr-PGL-SL},
the Euler characteristics for $SL_{n}$ and $PGL_{n}$-character varieties
are equal and can be computed strata by strata, by setting $x=1$
in the $GL_{n}$ polynomials of Proposition \ref{prop:Bnformula}
and Theorem \ref{thm:individual-strata}, and using Proposition \ref{prop:RPG-fibration}:
\begin{equation}
\chi(\mathcal{X}_{r}^{[m]}SL_{n})=\chi(\mathcal{X}_{r}^{[m]}PGL_{n})=\Big[\frac{e(\mathcal{X}_{r}^{[m]}GL_{n})}{(x-1)^{r}}\Big]_{x=1}\;.\label{eq:cancelling}
\end{equation}
To begin, we show the formula: 
\begin{equation}
\Big[\frac{B_{n}^{r}(x)}{(x-1)^{r}}\Big]_{x=1}=\Big[\frac{-1}{(x-1)^{r-1}}\frac{\mu(n)}{n}b_{1}(x^{n})\Big]_{x=1}=\mu(n)n^{r-2}\;.\label{eq:B/(x-1)}
\end{equation}
Indeed, by the recursive definition (\ref{eq:recursive}), the term
$(x-1)^{n(r-1)}$ can be factored out in $a_{n}(x)$ and, hence, the
same applies to every $b_{n}(x)$. Then, for $n=1$ we get $b_{1}(x)=-(x-1)^{r-1}$.
However, for $m>1$, we have $\Big[\frac{b_{m}(x)}{(x-1)^{r-1}}\Big]_{x=1}=0$,
so all terms with $b_{m}(x)$ for $m>1$, in the formula of Proposition
\ref{prop:Bnformula}, disappear from the calculation of the Euler
characteristic. For $b_{1}(x)$ we have the following expression,
for every $t\in\mathbb{N}$: 
\[
b_{1}(x^{s})^{t}=-(x^{s}-1)^{t(r-1)}=(-1)^{t}(x-1)^{t(r-1)}(1+x+\cdots+x^{s-1})^{t(r-1)}\;.
\]
Therefore, if $t>1$, $\Big[\frac{b_{1}(x^{s})^{t}}{(x-1)^{r-1}}\Big]_{x=1}=0$,
while, for $t=1$, we have $\Big[\frac{b_{1}(x^{s})}{(x-1)^{r-1}}\Big]_{x=1}=-s^{r-1}$.

Note that products $b_{i}(x)\cdot b_{j}(x)$, $i\neq j$, necessarily
have a zero of order greater than $r$ in $x=1$. Hence, in Lemma
\ref{lemma:Bn(x)}, after setting $\frac{B_{n}^{r}(x)}{(x-1)^{r}}$,
the only terms which do not vanish are those with $j=1$ and $k_{1}=1$
and $k_{j}=0$ for $j=2,\ldots,d$. Therefore $d=1$ and the only
partition contributing is $[1]\in\mathcal{P}_{1}$, yielding the equality
in Equation \eqref{eq:B/(x-1)}. Similarly, we have $\Big[\frac{B_{n}^{r}(x^{s})}{(x-1)^{r}}\Big]_{x=1}=\mu(n)s^{r}n^{r-2}$,
for every $s\in\mathbb{N}$.

To finish, we look at Theorem \ref{thm:individual-strata}. For rectangular
partitions with two or more blocks of different sizes (i.e. giving
terms of the form $B_{n_{1}}^{r}(x^{s_{1}})\cdot B_{n_{2}}^{r}(x^{s_{2}})$)
$x=1$ is a zero of order greater than $r$, then we get rid of them
in the final calculation. This leaves us to just consider rectangular
partitions $[[k]]$ with a single block $l\times h$ and $k_{l,h}=1$
and the rest $k_{l,h}=0$: they are indexed by divisors of $n$. Let
$[k]\in\mathcal{P}(n)$ be a partition by blocks of just one size
$[k]=[d^{n/d}]$. In the expression of Theorem \ref{thm:individual-strata},
put $l=d$ and $h=n/d$ and have 
\[
\chi(\mathcal{X}_{r}^{[d^{n/d}]}SL_{n})=\Big[\frac{e(\mathcal{X}_{r}^{[d^{n/d}]}GL_{n})}{(x-1)^{r}}\Big]_{x=1}=\Big[\frac{1}{(x-1)^{r}}\frac{d}{n}B_{d}(x^{n/d})\Big]_{x=1}=\frac{\mu(d)}{d}n^{r-1}\;,
\]
while for other strata $[k]\in\mathcal{P}(n)$ with two or more blocks
of different sizes, the Euler characteristic is zero. To get the Euler
characteristic of the full character variety, we sum up over all strata,
this is, over all divisors of $n$: 
\[
\chi(\mathcal{X}_{r}SL_{n})=\sum_{d|n}\Big[\frac{1}{(x-1)^{r}}\frac{d}{n}B_{d}(x^{n/d})\Big]_{x=1}=\sum_{d|n}\frac{\mu(d)}{d}n^{r-1}=\varphi(n)n^{r-2},
\]
by using Möbius inversion formula with the Euler function, $\varphi(n)=\sum_{d|n}\frac{n}{d}\mu(d)$. \end{proof}
\begin{rem}
This result extends \cite[Theorem 1.4]{MR} to the $SL_{n}$ case
and, moreover, calculates the Euler characteristic of all strata,
providing a geometrical meaning to the calculation in terms of the
Euler function. Also, note that the stratum $[1^{n}]$ corresponds
to an abelian character variety, and this computation agrees with
the one in \cite[Corollary 5.16]{FS}. \end{rem}
\begin{example}
(1) As an application of Proposition \ref{prop:euler}, for $n=4$,
we get 
\[
\chi(\mathcal{X}_{r}SL_{4})=\chi(\mathcal{X}_{r}PGL_{4})=2\cdot4^{r-2},
\]
where the only strata that contribute to the Euler characteristic
are: 
\[
\chi(\mathcal{X}_{r}^{[1^{4}]}SL_{4})=\chi(\mathcal{X}_{r}^{[1^{4}]}PGL_{4})=4^{r-1},\quad\quad\chi(\mathcal{X}_{r}^{[2^{2}]}SL_{4})=\chi(\mathcal{X}_{r}^{[2^{2}]}PGL_{4})=-2\cdot4^{r-2}.
\]
(2) In the case when $n$ is a prime number $p$, we similarly get:
\[
\chi(\mathcal{X}_{r}SL_{p})=(p-1)\cdot p^{r-2},
\]
where the only strata contributing to the Euler characteristic are:
\[
\chi(\mathcal{X}_{r}^{[1^{p}]}SL_{p})=p^{r-1},\quad\quad\chi(\mathcal{X}_{r}^{[p]}SL_{p})=-p^{r-2},
\]
and similarly for $\mathcal{X}_{r}PGL_{p}$.\\
 (3) More generally, if $n=p_{1}^{a_{1}}\cdot p_{2}^{a_{2}}\cdots p_{s}^{a_{s}}$
is the prime decomposition of $n$, the only strata contributing to
the Euler characteristic of the $SL_{n}$ and the $PGL_{n}$ character
variety of $F_{r}$ are those of the form $[d^{n/d}]$, where $d=p_{i_{1}}\cdot p_{i_{2}}\cdots p_{i_{t}}$
is square-free since then $\mu(d)\neq0$. These strata are the same
for $n^{\ast}:=p_{1}\cdot p_{2}\cdots p_{s}$ and for all $n$ with
the same prime divisors. Indeed, 
\[
\chi(\mathcal{X}_{r}^{[1^{n}]}SL_{n})=n^{r-1}\;,
\]
\[
\chi(\mathcal{X}_{r}^{[p_{i}^{n/p_{i}}]}SL_{n})=-\frac{1}{p_{i}}n^{r-1}\;,
\]
\[
\chi(\mathcal{X}_{r}^{[(p_{i}p_{j})^{n/p_{i}p_{j}}]}SL_{n})=\frac{1}{p_{i}p_{j}}n^{r-1}\;,\quad\cdots
\]
\[
\chi(\mathcal{X}_{r}^{[(n^{\ast})^{n/n^{\ast}}]}SL_{n})=(-1)^{s}\frac{1}{n^{\ast}}n^{r-1}\;,
\]
with sum 
\[
\prod_{p|n,\,p\;prime}(1-\frac{1}{p})n^{r-1}=\varphi(n)n^{r-2}\;.
\]
\end{example}

\end{document}